\UseRawInputEncoding

\documentclass{article}
\usepackage{verbatim}
\usepackage{amsmath,amssymb,amsthm,mathtools}
\usepackage{graphicx}
\usepackage{caption}
\usepackage[dvipsnames]{xcolor}
\usepackage{caption}
\usepackage{subcaption}
\usepackage{multirow}
\usepackage[normalem]{ulem} 
\usepackage{tikz}
\usetikzlibrary{arrows}
\usepackage{dsfont}
\usepackage{enumitem}

\usepackage[numbers,sort&compress]{natbib}

\textwidth 6.5in
\textheight 8.7in
\evensidemargin -0.2in
\oddsidemargin 0in
\topmargin -.2in
\parindent 0.2in

\mathtoolsset{showonlyrefs=true}

\newtheorem{theorem}{Theorem}[section]
\newtheorem{corollary}{Corollary}[section]
\newtheorem{lemma}{Lemma}[section]
\newtheorem{proposition}{Proposition}[section]
\newtheorem{condition}{Condition}
\newtheorem{assumption}{Assumption}
\newtheorem{definition}{Definition}
\newtheorem{example}{Example}
\newtheorem{construction}{Construction}
\newtheorem{notation}{Notation}

\DeclareMathOperator*{\argmin}{arg\,min}

\theoremstyle{remark}
\newtheorem{remark}{Remark}

\title{Multi-dimensional state space collapse\\ in non-complete resource pooling scenarios}
\author{Ellen Cardinaels
\and Sem Borst
\and Johan S.H. van Leeuwaarden}

\begin{document}

\maketitle

\begin{abstract}
The present paper establishes an explicit multi-dimensional state space collapse (SSC) for parallel-processing systems with arbitrary compatibility constraints between servers and job types.
This breaks major new ground beyond the SSC results and queue length asymptotics in the literature which are largely restricted to complete resource pooling (CRP) scenarios where the steady-state queue length vector concentrates around a line in heavy traffic.
The multi-dimensional SSC that we establish reveals heavy-traffic behavior which is also far more tractable than the pre-limit queue length distribution, yet exhibits a fundamentally more intricate structure than in the one-dimensional case, providing useful insight into the system dynamics.
In particular, we prove that the limiting queue length vector lives in a $K$-dimensional cone of which the set of spanning vectors is random in general, capturing the delicate interplay between the various job types and servers. For a broad class of systems we provide a further simplification  which shows that the collection of random cones constitutes a fixed $K$-dimensional cone, resulting in a $K$-dimensional SSC. The dimension~$K$ represents the number of critically loaded subsystems, or equivalently, capacity bottlenecks in heavy-traffic, with $K=1$ corresponding to conventional CRP scenarios.
Our approach leverages probability generating function (PGF) expressions for Markovian systems operating under redundancy policies.
\end{abstract}



\maketitle

\section{Introduction}

In the present paper we analyze the heavy-traffic behavior of parallel-processing systems with redundancy policies in scenarios that go beyond the conventional complete resource pooling (CRP) condition.
In particular, we provide the first explicit characterization of a multi-dimensional state space collapse and the associated (scaled) steady-state queue length vector in fairly general \emph{non}-CRP scenarios.

While there are several formulations in the literature, broadly speaking the CRP condition entails that the system does not experience any local capacity bottlenecks, and operates in heavy traffic as if there is only a single global resource constraint in force.
Thus, the system behaves as a single-server queue with fully pooled resources in a critical-load regime, and the steady-state queue length vector typically concentrates around a line, thus exhibiting a state space collapse (SSC) which yields far greater tractability compared to the pre-limit queue length distribution.

As alluded to above, the CRP condition not only provides an appealing design objective but is also instrumental in facilitating a detailed analysis of the heavy-traffic behavior.
Indeed, \emph{non}-CRP scenarios have eluded an explicit derivation of the limiting queue length distribution in any degree of generality so far, yet such scenarios may naturally arise in various situations.

In order to illustrate this, let us focus for the moment on the simplest possible setting of a system with two job types and two servers.
For convenience, suppose that type-$i$ jobs arrive as a Poisson process of rate $\lambda_i = (1 - \epsilon) \mu_i$ and have independent and exponentially distributed service requirements, with $\mu_i$ denoting the processing speed of server~$i$, $i = 1, 2$.
The parameter $\epsilon \in (0, 1)$ may be interpreted as a relative capacity slack in a baseline scenario where type-$i$ jobs can only be processed by server~$i$, $i = 1, 2$, and both queues are dimensioned to operate at a relative load of $1 - \epsilon$.
Denote by $(Q_1, Q_2)$ the steady-state queue length vector, with $Q_i$ counting the number of type-$i$ jobs, $i = 1, 2$.
In this complete \emph{partitioning} scenario, the system decomposes into two independent M/M/1 queues, and for any non-anticipating and non-preemptive service discipline it holds that
\begin{equation}
\epsilon (Q_1, Q_2) \overset{d}{\rightarrow} (U, U') \hspace*{.2in} \mbox{ as } \epsilon \downarrow 0,
\end{equation}
with $\overset{d}{\rightarrow}$ denoting convergence in distribution, and $U$ and $U'$ representing two independent and exponentially distributed random variables with unit mean.

In contrast, in a complete \emph{sharing} scenario where both job types can be processed by either server, the system operates as a single M/M/1 queue with arrival rate $\lambda_1 + \lambda_2 = (1-\epsilon)(\mu_1+\mu_2)$ and service rate $\mu_1 + \mu_2$.
In that case, it holds for any non-anticipating and non-preemptive service discipline that does not distinguish between the job types that
\begin{equation}
\epsilon (Q_1, Q_2) \overset{d}{\rightarrow} (p_1, p_2) U \hspace*{.2in} \mbox{ as } \epsilon \downarrow 0,
\end{equation}
with $p_i = \frac{\lambda_i}{\lambda_1 + \lambda_2} = \frac{\mu_i}{\mu_1 + \mu_2}$ the fraction of type-$i$ jobs, $i = 1, 2$, and $U$ an exponentially distributed random variable with unit mean.
Thus the (scaled) queue length vector concentrates around a line with slope $(p_1, p_2)$ in heavy traffic, which is a manifestation of a (one-dimensional) SSC.
Note that the (scaled) total number of jobs has the same unit exponential distribution as the number of jobs of each individual type in a complete \emph{partitioning} scenario, and that the (scaled) number of type-$i$ jobs is now stochastically smaller by a factor $p_i \leq 1$, $i = 1, 2$, reflecting the performance gains in the complete \emph{sharing} scenario.

In many situations, however, it may not be feasible for all job types to be handled by all servers.
Indeed, some servers may either be able to only handle generic jobs, or be customized to handle highly specialized jobs, while other servers may be highly flexible but costly or scarcely available.
In the above setup, suppose that type-$1$ jobs can be handled by either server~$1$ or~$2$, while type-$2$ jobs can be handled by server~$2$ only.
In hospital environments, one could imagine that brain or heart surgery patients can only be assigned to a specialized ward with intensive care, while orthopedic treatment patients can be accommodated anywhere.
This scenario is commonly referred to as the `N-model' in the literature in view of the compatibility graph between the job types and the servers as depicted in Figure~\ref{fig:Nmodel}.

As a special case of our results for systems with arbitrary compatibility constraints operating under redundancy policies (as further specified below), it follows for a FCFS service discipline that
\begin{equation}
\label{eq:Nmodel_limiet}
\epsilon (Q_1, Q_2) \overset{d}{\rightarrow} (p_1 U, p_2 U + U') \hspace*{.2in} \mbox{ as } \epsilon \downarrow 0,
\end{equation}
with $U$ and $U'$ two independent and exponentially distributed random variables with unit mean.
Observe that the queue length vector no longer concentrates around a line, but lives in a two-dimensional cone spanned by the vectors $(p_1, p_2)$ and $(0, 1)$.
Thus, the two queue lengths are no longer perfectly correlated as in the complete \emph{sharing} scenario, but are still coupled in a subtle manner, and not completely independent as in the complete \emph{partitioning} scenario.
Indeed, the system exhibits a SSC, since the cone is a subspace of the full two-dimensional state space of the pre-limit queue length, even though it has the same dimension in the particular case of the N-model.
Further note that the (scaled) total number of jobs is distributed as $U + U'$ in the limit, just like in the complete \emph{partitioning} scenario, and that the (scaled) number of type-$2$ jobs is stochastically larger now, since they only have access to server~$2$ and face competition from type-$1$ jobs at that server.
In contrast, the (scaled) number of type-$1$ jobs is stochastically reduced, since they still enjoy exclusive access to server~$1$, but can also compete for access to server~$2$.

\begin{figure}[h]
\centering
\begin{subfigure}[b]{0.49\textwidth}
\centering
\includegraphics[scale=1]{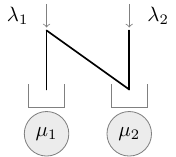}
\caption{Sketch of the system.}
\end{subfigure}
\begin{subfigure}[b]{0.49\textwidth}
\centering
\includegraphics[scale = 1]{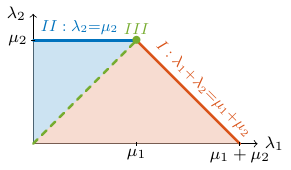}
\subcaption{Visual representation of the stability region.}\label{fig:Nmodel_stab}
\end{subfigure}
\caption{Visualization of the N-model.}\label{fig:Nmodel}
\end{figure}

In the present paper we establish an explicit multi-dimensional SSC and heavy-traffic limits similar to~\eqref{eq:Nmodel_limiet} for Markovian parallel-processing systems with arbitrary compatibility constraints operating under so-called redundancy policies.
Such compatibility constraints are prevalent in a wide variety of societal and technological systems due to heterogeneity in service demands and resources.
Prominent examples include data centers and cloud networks (due to data locality issues),
ride-sharing platforms and housing allocation agencies (due to spatial proximity considerations),
multi-skill service systems with varying degrees of server flexibility,
and organ transplantation networks.


We use probability generating function (PGF) expressions to provide an explicit characterization for the distribution of the limiting queue length vector, which reveals that this queue length vector lives in a $K$-dimensional cone spanned by a set of possibly random vectors, capturing the subtle interaction between the various job types and servers. For a broad class of systems we provide a further simplification, from which we deduce that the collection of random cones forms a fixed $K$-dimensional cone. The dimension~$K$ then represents the number of critical components, or equivalently, capacity bottlenecks in heavy traffic, with $K = 1$ corresponding to conventional CRP scenarios. If we denote the spanning vectors of the fixed $K$-dimensional cone by $\boldsymbol{\alpha}_1, \dots, \boldsymbol{\alpha}_K$, then the limiting (scaled) queue length vector can be represented as a linear combination of these vectors with $K$~independent exponentially distributed random variables associated with the $K$~critically loaded subsystems acting as scalar coefficients. Thus, the number of jobs of a particular type~$i$ is a weighted sum of these random variables, with the weight factors $\alpha_{i,1}, \dots, \alpha_{i, K}$ representing the load fractions that type-$i$ jobs account for in each of the subsystems.

In the N-model discussed above, there are $K = 2$ critical components: the subsystem consisting of type-$2$ jobs and server~$2$, and the `subsystem' comprising the entire system.
Thus type-$1$ jobs only belong to the second `subsystem' where they account for a fraction of the load~$p_1$, translating into the term $p_1 U$ in~\eqref{eq:Nmodel_limiet}, while type-$2$ jobs belong to both subsystems, making up the full load in the first one and accounting for a fraction of the load~$p_2$ in the second one, yielding the term $p_2 U + U'$ in~\eqref{eq:Nmodel_limiet}.
Further observe that the complete \emph{sharing} scenario satisfies the CRP condition with $K = 1$, while the complete \emph{partitioning} scenario amounts to a somewhat degenerate case with $K = 2$ critical components which are in fact entirely independent.\\

Heavy-traffic analysis has a long and rich history, dating back to the pioneering work of Kingman \cite{kingman1961,kingman1962} focusing on the single-server queue.
Subsequent work pursued heavy-traffic limits of an increasingly wide range of stochastic systems, such as queueing networks \cite{Bramson1998,Laws1992,Kelly1993,Williams1998},
parallel-server systems under various static and dynamic assignment policies
\cite{Bell2001,Bell2005,Harrison1999,Harrison1998,Stolyar2005,Mandelbaum2004,Williams2000},
stochastic processing networks~\cite{Dai2008},
bandwidth-sharing networks~\cite{Kang2009}
and generalized switches \cite{hurtado2020transform,Hurtadolange2019,Stolyar2004,Shah2014}.
As testified by these papers, heavy-traffic theory provides a powerful approach to analyze (scaled) queue lengths and delays, yielding valuable insight in the key performance characteristics of a variety of complex systems which would be mostly intractable otherwise.

The above set of references is far from exhaustive and a detailed review of the huge literature on heavy-traffic limits is beyond the scope of the present paper.
Broadly speaking, however, the bulk of the literature pertains to process-level limits and/or systems that satisfy the CRP condition mentioned earlier.
To the best of our knowledge, explicit results for \emph{steady-state queue length distributions} in \emph{non-CRP scenarios} have remained extremely scarce so far.

For example, Kang \textit{et al.}~\cite{Kang2009} derive process-level diffusion limits for Markovian bandwidth-sharing networks operating under $\alpha$-fair rate allocation policies, which generally do not satisfy the CRP condition.
For the special case of linear topologies and $\alpha = 1$ (Proportional Fairness) it is shown that, under certain local traffic assumptions, the steady-state distribution of the limiting diffusion process exhibits a product-form. This result was later extended by Vlasiou~\textit{et al.}~\cite{Vlasiou2021}, allowing for phase-type job size distributions and more general topologies.
The authors in~\cite{Kang2009} conjecture that this product-form carries over to the heavy-traffic limit of the steady-state queue length distribution of the original system. A rigorous proof of such an interchange-of-limits was later provided by Wang~\textit{et al.}~\cite{Wang2022}.

A different strand of more recent research on generalized switch models operating under a MaxWeight scheduling algorithm \emph{does} directly target steady-state heavy-traffic results in non-CRP scenarios.
Hurtado-Lange \& Maguluri~\cite{HurtadoLange2022} adopt techniques from~\cite{Eryilmaz2012} to prove a multi-dimensional SSC, and show how this property can be used to derive (scaled) first moments of certain linear combinations of queue lengths.
They observe however that this approach cannot be readily extended to obtain \emph{distributional} results for the joint queue length vector due to the presence of unknown cross terms.
In order to address the latter challenge, Jhunjhunwala \& Maguluri \cite{Jhunjhunwala2022,Jhunjhunwala2023} extend the transform method developed in~\cite{hurtado2020transform} to non-CRP scenarios to provide an
implicit characterization of the limiting queue length distribution for the class of input-queued switches, along with an explicit characterization in the special case of the above-described N-model under additional symmetry and uniqueness assumptions. A further discussion will be provided in Subsection~\ref{subsec:discussion}, after presenting our results.
Varma \& Maguluri~\cite{Varma2021} focus on the performance impact of the structure of the compatibility constraints in the above context, and present algorithms for selecting the compatibility relations so as to obtain the desired dimension of the SSC or for identifying job server pairs that might decrease the dimension of the collapsed limiting distribution.

A further body of work that is targeted at steady-state heavy-traffic results pertains to Markovian parallel-server systems with a First-Come First-Served (FCFS-ALIS) assignment policy, which can equivalently be thought of as a Join-the-Smallest-Workload (JSW) policy.
Specifically, Af\`eche \emph{et al.}~\cite{Afeche2021} and Hillas \emph{et al.}~\cite{Hillas2023} show that in a heavy-traffic regime the system decomposes into several components or subsystems each experiencing a critical load, and present expressions for the expected waiting times of jobs in the various components. 

The model set-up that we adopt strongly resembles that in \cite{Afeche2021,Hillas2023} and also permits arbitrary compatibility constraints between servers and job types.
Rather than deriving expected waiting times, however, we focus on establishing a multi-dimensional SSC which yields distributional results for the joint queue length as well as delays (along with higher moments).\\



We consider redundancy policies which provide a natural class of job assignment mechanisms in the presence of compatibility constraints.
Their key feature is to make copies or replicas of every arriving job and to assign one copy to each of its compatible servers, with the aim to exploit the variability in queue lengths and/or service times encountered by the different copies of the same job.
As soon as one copy either terminates or initiates service the remaining copies are discarded, which is referred to as the cancel-on-completion (c.o.c.) or cancel-on-start (c.o.s.) version of the redundancy policy, respectively.
The latter version is equivalent to the JSW policy mentioned above.

Besides their natural fit to handle compatibility constraints, redundancy policies come with the remarkably luxury of mathematical tractability.
Specifically, product-form expressions for the stationary distribution of the system occupancy in Markovian settings were derived by Gardner \emph{et al.}~\cite{Gardner2016queueing} and Ayesta \emph{et al.}~\cite{ayesta2018unifying}, and are closely related to earlier results by Visschers \emph{et al.}~\cite{visschers2012product} and Adan \& Weiss~\cite{adan2014skill}.
An overview of related models involving job-server compatibility constraints that yield product-form expressions is provided by Gardner \& Righter~\cite{Gardner2020}.

At first sight, the availability of such product-form expressions may seem to defeat the entire rationale of a heavy-traffic analysis.
However, the involved state description of the system occupancy is so detailed that the product-form expressions unfortunately cannot be directly used for analyzing key performance metrics like queue lengths or delays \cite{Gardner2017scheduling,visschers2012product,Adan2018}.
Our approach therefore leverages more convenient PGF expressions for the joint queue length vector~\cite{Cardinaels2022} which pave the way for the analytical derivation of heavy-traffic limits and additionally lend themselves to a more insightful probabilistic interpretation in terms of geometrically distributed random variables.
Closer inspection then not only allows for an alternative probabilistic derivation of the heavy-traffic behavior, but also illuminates that the $K$~independent exponentially distributed random variables in the stochastic representation of the steady-state queue length vector reflect the contributions to the overall queue length from the $K$~critically loaded subsystems.
Building on the product-form expressions (as opposed to proving process-level heavy-traffic limits) offers the further advantage that convergence of the steady-state queue length distribution comes for free, without the need to prove an interchange-of-limits which usually is a significant challenge.

The remainder of this paper is organized as follows.
The detailed model description and preliminary results can be found in Sections~\ref{subsec:modelparameters} and~\ref{sec:preliminaries}, respectively. The main results for the c.o.c.\ policy are outlined in Section~\ref{sec:main_results}, while those for the c.o.s.\ policy are deferred to Appendix~\ref{app:main_results_cos}. The pre-limit characterization of the queue length vector and an alternative proof of the main heavy-traffic results, using stochastic arguments and a probabilistic interpretation of the PGF expressions are presented in Appendix~\ref{sec:alternative proof}.
We conclude with some topics for further research in Section~\ref{sec:outlook}.

\section{Model description}\label{subsec:modelparameters}
Consider a system consisting of $N$ parallel servers, each with their own dedicated waiting line, where jobs arrive according to a Poisson process with rate $N\lambda$, $\lambda> 0$. The servers, indexed as $1,\dots,N$, process assigned jobs in order of arrival at speed $\mu_n> 0$, for $n =1,\dots,N$. 
The average processing speed per server is denoted by $\mu \coloneqq (1/N)\sum_{n=1}^N \mu_n$ .

The jobs can be categorized into different classes or types based on the specific subset of servers they are compatible with. In particular, a job that is compatible with all servers in $S\subseteq\{1,\dots,N\}$ is labeled as a type-$S$ job. The fraction of jobs that receive a type-$S$ label is denoted by $p_S$, and the collection of all job types is referred to as $\mathcal{S} \coloneqq \{ S\subseteq\{1,\dots,N\}\colon p_S >0\}$.
The arrival process can equivalently be seen as $|\mathcal{S}|$ independent Poisson processes with rates $\{\lambda_S = N\lambda p_{S} \colon S\in \mathcal{S}\}$ and corresponding type labels.

A bipartite graph can be constructed to represent the compatibility constraints of the various job types, with a node for each job type  and each server. A job type $S$ is connected to server $n$ whenever $n\in S$, for all $S\in\mathcal{S}$ and $n=1,\dots,N$.

We denote the total number of type-$S$ jobs, and the total number of \emph{waiting} type-$S$ jobs by $Q_S$ and $\tilde{Q}_S$, respectively, for all $S\in\mathcal{S}$.

The arriving jobs are assigned to the various servers according to a redundancy policy. In particular, copies of the job are made for each of the compatible servers and forwarded to the respective waiting lines. 
Each copy is assumed to have an exponentially distributed service requirement with unit mean.
The remaining (or redundant) copies are removed once one copy either \emph{finishes} its service or \emph{initiates} its service. The former setting is referred to as cancel-on-completion (c.o.c.)\ and the latter as cancel-on-start (c.o.s.). The aim of the redundancy policy is to exploit the variability in the queue lengths at different servers and the service requirements of different copies of the same job, which are assumed to be independent in case of the c.o.c.\ mechanism \cite{Gardner2016queueing,ayesta2018unifying,visschers2012product,Adan2018}. 


\section{Preliminaries}\label{sec:preliminaries}
\subsection{Stability and pre-limit results}
In this section we elaborate on some of the known results for the system described in the previous section which will form the foundations for the analysis in Section~\ref{sec:main_results}.

First, the conditions below are necessary and sufficient in order for the system to be stable~\cite{Gardner2016queueing,visschers2012product}.

\begin{condition}[Stability]\label{cond:stab}
The system described in Section~\ref{subsec:modelparameters} is stable if for all non-empty subsets of job types $\mathcal{T} \subseteq \mathcal{S}$, 
$N\lambda p(\mathcal{T}) < \mu(\mathcal{T})$,
where $p(\mathcal{T})\coloneqq \sum_{S\in \mathcal{T}} p_S$ denotes the fraction of jobs with labels in $\mathcal{T}$ and $\mu(\mathcal{T})\coloneqq\sum_{n\in \mathcal{T}} \mu_n$ denotes the aggregate service rate of all servers that are compatible with at least one job type in $\mathcal{T}$.
\end{condition}

Under these stability conditions, product-form expressions for the steady-state distribution of the system state were derived by Gardner \emph{et al.}\ \cite{Gardner2016queueing}, Ayesta \emph{et al.}~\cite{ayesta2018unifying} and Visschers \emph{et al.}~\cite{visschers2012product}.
The downside is that these expressions involve a very detailed state descriptor and offer little insight in the system performance. In particular, the product-form expressions have a factor for each (waiting) job in the system and hence give some information about the steady-state distribution of the (ordered and centralized) waiting line. However, it is not immediately clear how to gain more global understanding of the system dynamics, for instance in terms of the compatibility constraints or the total numbers of jobs of the various types.

The joint probability generating function (PGF) of the numbers of jobs of the various types was derived in~\cite{Cardinaels2022} by suitably aggregating the states. For completeness, the PGFs for the c.o.c.\ and the c.o.s.\ mechanisms are provided in Propositions~\ref{prop:pgf} and~\ref{prop:pgfcos} (in Appendix~\ref{app:prelim_cos}), respectively.

\begin{proposition}[Probability generating function - c.o.c.]\label{prop:pgf}
The joint PGF of the numbers of jobs of the various types for the redundancy c.o.c.\ policy is given by 
\begin{equation}\label{eq:pgf}
\mathbb{E}\bigr[\prod\limits_{S \in {\mathcal S}} z_S^{Q_S}\bigr] = \frac{f(\boldsymbol{z})}{f(\boldsymbol{1})},
\end{equation}
where $\boldsymbol{z}$ and $\boldsymbol{1}$  are $|\mathcal{S}|$-dimensional vectors with entries $|z_S| \le 1$ and 
\begin{equation}\label{eq:f}
 f(\boldsymbol{z}) = 1{+} \sum\limits_{m=1}^{|\mathcal{S}|}\sum\limits_{\boldsymbol{S}\in \mathcal{S}_m} \prod\limits_{j=1}^m \frac{N\lambda p_{S_j}z_{S_j}}{\mu(S_1,\dots,S_j)}
 \biggl(1-\frac{N\lambda}{\mu(S_1,\dots,S_j)}\sum\limits_{i=1}^j p_{S_i}z_{S_i}\biggl)^{-1}.
\end{equation}
The $m$-dimensional vector $\boldsymbol{S}$ consists of $m$ different job types, and the set consisting of all these vectors is denoted by $\mathcal{S}_m$~\textup{\cite{Cardinaels2022}}.
\end{proposition}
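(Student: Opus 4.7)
The plan is to derive the PGF directly from the known product-form stationary distribution on the detailed state descriptor, by a combinatorial reorganization of the resulting sum over states.

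First, I would invoke the product-form expression from~\cite{Gardner2016queueing,visschers2012product,adan2014skill} on the central ordered list of job types in the system. For an ordered list $(S_1,\dots,S_m) \in \mathcal{S}^m$ (with repetitions allowed, $S_1$ denoting the oldest waiting job), the stationary probability is proportional to $\prod_{j=1}^m N\lambda p_{S_j}/\mu(S_1,\dots,S_j)$, where $\mu(S_1,\dots,S_j) = \sum_{n \in S_1 \cup \cdots \cup S_j} \mu_n$. The crucial observation is that this denominator depends on the prefix only through the \emph{union} $S_1 \cup \cdots \cup S_j$, so any consecutive run of jobs whose types do not enlarge that union contributes identical denominators.

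Next, I would form the joint PGF $\mathbb{E}[\prod_S z_S^{Q_S}]$ by multiplying the above weight with $\prod_{j=1}^m z_{S_j}$, summing over all finite ordered lists, and dividing by the partition function. The central step is to reparametrize the sum according to the subsequence of job types in \emph{order of first appearance}: let $k$ be the number of distinct types present, let $(S_1,\dots,S_k) \in \mathcal{S}_k$ be the tuple of first-appearance types, and set $T_r = \{S_1,\dots,S_r\}$. Every ordered list decomposes uniquely into $k$ segments, where segment $r$ begins with the first copy of $S_r$ and is followed by an arbitrary (possibly empty) ``filler'' run of jobs whose types all lie in $T_r$. The initial element of segment $r$ contributes the factor $N\lambda p_{S_r} z_{S_r} / \mu(S_1,\dots,S_r)$, while summing over the geometric filler run yields
\begin{equation}
\sum_{\ell=0}^{\infty} \left( \frac{N\lambda}{\mu(S_1,\dots,S_r)} \sum_{i=1}^r p_{S_i} z_{S_i} \right)^{\!\ell} = \left( 1 - \frac{N\lambda}{\mu(S_1,\dots,S_r)} \sum_{i=1}^r p_{S_i} z_{S_i} \right)^{-1},
\end{equation}
which converges for $|z_S| \le 1$ by Condition~\ref{cond:stab} applied to $\mathcal{T} = T_r$. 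Combining both contributions over $r = 1,\dots,k$, summing over $k$ and over $(S_1,\dots,S_k) \in \mathcal{S}_k$, and letting the empty list account for the leading $1$, produces precisely $f(\boldsymbol{z})$ as in~\eqref{eq:f}. Dividing by $f(\boldsymbol{1})$ normalizes the distribution and yields~\eqref{eq:pgf}.

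The main technical obstacle I anticipate is the bookkeeping of the first-appearance decomposition: one must check that every ordered list corresponds to a unique choice of $k$, of the distinct tuple $(S_1,\dots,S_k)$, and of $k$ filler runs, and that the product of stationary weights factors cleanly along this decomposition, i.e.\ that within each filler segment the denominator is indeed constant and equal to $\mu(S_1,\dots,S_r)$. Once this is verified, the remaining ingredients---the product-form expression and the geometric summation---follow from existing results and from the stability condition, respectively, and no further estimates are needed.
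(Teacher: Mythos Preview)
Your proposal is correct and matches the paper's treatment. The proposition itself is quoted from~\cite{Cardinaels2022} without a self-contained proof, but the paper effectively re-derives it in Section~\ref{sec:interpretation}: the proof of Theorem~\ref{th:prelimit_distributions} starts from the same product-form expression~\eqref{eq:PF_coc}, performs exactly your first-appearance decomposition $\boldsymbol{c} = (T_1,\boldsymbol{Y}^1,T_2,\boldsymbol{Y}^2,\dots)$, and sums the geometric filler segments to recover the factors in $f(\boldsymbol{z})$; Theorem~\ref{th:probabilistic_interpretation} then confirms this representation agrees with the PGF in Proposition~\ref{prop:pgf}.
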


To ease the notation throughout this paper we introduce the following notation when focusing on the first $j$ entries of the vector~$\boldsymbol{S}$,
\begin{equation}\label{eq:abbreviated}
p(\boldsymbol{S},j) \coloneqq \sum\limits_{i=1}^j p_{S_i}
\quad \text{and} \quad
\mu(\boldsymbol{S},j)  \coloneqq \mu(S_1,\dots,S_j) = \sum_{n\in \cup_{i=1}^j S_i} \mu_n.
\end{equation}

\subsection{Heavy-traffic regime and non-complete resource pooling}\label{subsec:HT_and_nonCRP}

We are mainly interested in the system performance when the average arrival rate per server, $\lambda$, approaches a critical value. 
\begin{definition}[Critical arrival rate]\label{def:critical_arrival_rate}
The critical arrival rate of the system, $\lambda^*$, for given values of $\mu_n$, $n=1,\dots,N$ and $p_S$, $S\in\mathcal{S}$, is defined as
$\frac{1}{N} \min_{\mathcal{T}\subseteq \mathcal{S}} \left\{ \mu(\mathcal{T})/p(\mathcal{T}) \right\}$, 
with $\mu(\mathcal{T})$ and $p(\mathcal{T})$ as defined in Condition~\ref{cond:stab}.
\end{definition}

Considering model parameters that satisfy the stability conditions (Condition~\ref{cond:stab}), when the arrival rate~$\lambda$ is increased, $(N\lambda p_S)_{S\in\mathcal{S}}$ will reach the boundary of the stability region when $\lambda$ becomes equal to the associated critical arrival rate~$\lambda^*$. To specify which part of the boundary of the stability region is reached, we define the critical subset(s) of job types.
\begin{definition}[Critical subset of types] \label{def:critical_subset}
The subset $\mathcal{T}\subseteq\mathcal{S}$ of job types is called critical if
$$\mathcal{T}\in\argmin_{\mathcal{T}\subseteq \mathcal{S}} \left\{ \mu(\mathcal{T})/p(\mathcal{T}) \right\},$$ 
with $\mu(\mathcal{T})$ and $p(\mathcal{T})$ as defined in Condition~\ref{cond:stab}. The collection of all critical subsets of job types is denoted by $\mathcal{CR(S)}$.
\end{definition}

A common assumption in the heavy-traffic literature is formalized below.
\begin{condition}[CRP]\label{cond:crp}
The system described in Section~\ref{subsec:modelparameters} satisfies the complete resource pooling \textup{(}CRP\textup{)} condition if 
there is a unique critical subset of job types, i.e., $\mathcal{CR}(\mathcal{S}) = \{\mathcal{T}^*\}$ for some $\mathcal{T}^*\subseteq\mathcal{S}$.
Furthermore we distinguish between \emph{weak} and \emph{strong} CRP depending on whether $\mathcal{T}^*\subsetneq\mathcal{S}$ or $\mathcal{T}^*=\mathcal{S}$, respectively.
\end{condition}
Notice that the condition for strong CRP is equivalent to 
$ p(\mathcal{T}) < \mu(\mathcal{T})/(N\mu)$,
for all strict subsets of job types $\mathcal{T} \subsetneq \mathcal{S}$. Moreover, the critical arrival rate is as large as possible, i.e., $\lambda^* = \mu$.
We refer to~\cite[Appendix EC.1]{Cardinaels2022} for an in-depth comparison between the different notions of the CRP condition in the literature specialized to the setting of a parallel-server system, .

\begin{definition}[non-CRP] All system settings as described in Section~\ref{subsec:modelparameters} that do not satisfy the CRP condition \textup{(}Condition~\ref{cond:crp}\textup{)}, are referred to as \emph{non-CRP scenarios}.
\end{definition}


For these non-CRP scenarios we introduce the notions of \emph{depth} of the critical subsets and \emph{$k$-critical vectors} to study the critical subsets of job types in more detail.

\begin{definition}[Depth of the critical subsets]\label{def:depthK} The depth $K$ of the critical subsets of job types is defined as
$
K\coloneqq \max\left\{ k \colon \mathcal{T}_1 \subsetneq \mathcal{T}_2 \subsetneq \dots \subsetneq \mathcal{T}_k \text{~with~} \mathcal{T}_1,\mathcal{T}_2,\dots,\mathcal{T}_k \in \mathcal{CR(S)} \right\}.
$
Thus $K$ represents the maximum number of critical subsets that can be nested into each other.
\end{definition}
Note that the stability region can be thought of as an intersection of half spaces using the inequalities in Condition~\ref{cond:stab}, where $K$ coincides with the number of half spaces that the critical arrival rate vector $(N\lambda^* p_S)_{S\in\mathcal{S}}$ reaches simultaneously. Moreover, depending on the model parameters, $K$ can take any integer value between 1 and $|\mathcal{S}|$.

The following connection between Condition~\ref{cond:crp} and Definition~\ref{def:depthK} can be made:
\begin{lemma}\label{lem:CRPvsK}
The following two statements are equivalent for given values of $\mu_n$, $n=1,\dots,N$, and $p_S$, $S\in\mathcal{S}$:
\textup{(}i.\textup{)} The \textup{(}weak\textup{)} CRP condition is satisfied; \textup{(}ii.\textup{)} The depth of the critical subsets $K=1$. 
\end{lemma}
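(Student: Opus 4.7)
The implication (i) $\Rightarrow$ (ii) is immediate from the definitions: a singleton $\mathcal{CR(S)}$ admits no strictly nested chain of length greater than one, so $K = 1$.

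For the converse (ii) $\Rightarrow$ (i), my plan is to argue by contradiction. Suppose $K = 1$ yet there exist two distinct critical subsets $\mathcal{T}_1, \mathcal{T}_2 \in \mathcal{CR(S)}$. Since $K = 1$ forbids strict nesting among critical subsets, neither of $\mathcal{T}_1, \mathcal{T}_2$ can contain the other. I would then establish the lattice-closure property that the union of two critical subsets is itself critical. Applied to $\mathcal{T}_1, \mathcal{T}_2$, this produces $\mathcal{T}_1 \subsetneq \mathcal{T}_1 \cup \mathcal{T}_2 \in \mathcal{CR(S)}$, a chain of length two in $\mathcal{CR(S)}$, contradicting $K = 1$ and forcing uniqueness.

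To prove the lattice-closure property, the key inputs are submodularity of $\mathcal{T} \mapsto \mu(\mathcal{T})$ and additivity of $\mathcal{T} \mapsto p(\mathcal{T})$. Writing $N(\mathcal{T}) \coloneqq \bigcup_{S \in \mathcal{T}} S$, one has $N(\mathcal{T}_1 \cup \mathcal{T}_2) = N(\mathcal{T}_1) \cup N(\mathcal{T}_2)$ and $N(\mathcal{T}_1 \cap \mathcal{T}_2) \subseteq N(\mathcal{T}_1) \cap N(\mathcal{T}_2)$, from which summing $\mu_n$ over these server sets yields $\mu(\mathcal{T}_1 \cup \mathcal{T}_2) + \mu(\mathcal{T}_1 \cap \mathcal{T}_2) \leq \mu(\mathcal{T}_1) + \mu(\mathcal{T}_2)$. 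Combining this with modularity of $p$, the generic lower bound $\mu(\mathcal{T}) \geq N\lambda^* p(\mathcal{T})$ built into Definition~\ref{def:critical_arrival_rate}, and the criticality identities $\mu(\mathcal{T}_i) = N\lambda^* p(\mathcal{T}_i)$ for $i = 1, 2$, yields a short sandwich that must collapse to equality throughout, in particular forcing $\mu(\mathcal{T}_1 \cup \mathcal{T}_2) = N\lambda^* p(\mathcal{T}_1 \cup \mathcal{T}_2)$, i.e., criticality of the union. The case $\mathcal{T}_1 \cap \mathcal{T}_2 = \emptyset$ is absorbed via the convention $\mu(\emptyset) = p(\emptyset) = 0$.

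The main obstacle, such as it is, lies in the submodularity step: since $\mu(\mathcal{T})$ is defined via a union of compatible server sets rather than as a plain sum over elements of $\mathcal{T}$, set-level submodularity of the server-side union has to be translated into a submodularity statement at the type-collection level through the neighborhood map $N(\cdot)$. Once this translation is in place, the rest of the argument is a one-step sandwiching and the equivalence follows immediately.
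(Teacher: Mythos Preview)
Your argument is correct and uses the same core ingredient as the paper---the submodularity inequality $\mu(\mathcal{T}_1\cup\mathcal{T}_2)+\mu(\mathcal{T}_1\cap\mathcal{T}_2)\le\mu(\mathcal{T}_1)+\mu(\mathcal{T}_2)$ combined with modularity of $p$ and the criticality identities---but packages it a bit differently. The paper case-splits on whether $\mathcal{T}_1\cap\mathcal{T}_2$ is critical: if it is, one immediately has the nested pair $\mathcal{T}_1\cap\mathcal{T}_2\subsetneq\mathcal{T}_1$; if it is not, the paper derives the strict inequality $N\lambda^*p(\mathcal{T}_1\cup\mathcal{T}_2)>\mu(\mathcal{T}_1\cup\mathcal{T}_2)$, contradicting the definition of $\lambda^*$. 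You instead run a single sandwich to force $\mu(\mathcal{T}_1\cup\mathcal{T}_2)=N\lambda^*p(\mathcal{T}_1\cup\mathcal{T}_2)$ directly, so the \emph{union} is critical and $\mathcal{T}_1\subsetneq\mathcal{T}_1\cup\mathcal{T}_2$ gives the chain. Your route is marginally cleaner: it avoids the case split and, by adopting the convention $\mu(\emptyset)=p(\emptyset)=0$, handles the disjoint case $\mathcal{T}_1\cap\mathcal{T}_2=\emptyset$ uniformly (a case where the paper's strict-inequality branch does not literally apply, since $N\lambda^*p(\emptyset)=\mu(\emptyset)=0$). Either way the content is the same lattice-closure observation for $\mathcal{CR}(\mathcal{S})$.
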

\begin{proof}
To prove the equivalence, we exploit the definition and properties of the collection of all critical subsets~$\mathcal{CR}(\mathcal{S})$.
The first statement obviously implies the second statement since $|\mathcal{CR}(\mathcal{S})|=1$. For the reverse statement we show that $K$ will always be strictly larger than 1 if there is more than one critical subset of job types, i.e., $|\mathcal{CR}(\mathcal{S})|>1$. Let $\mathcal{T}_1,\mathcal{T}_2\in\mathcal{CR}(\mathcal{S})$ be two distinct critical subsets of job types. We distinguish two scenarios: $\mathcal{T}_1\cap\mathcal{T}_2 \in \mathcal{CR}(\mathcal{S})$ and 
$\mathcal{T}_1\cap\mathcal{T}_2 \notin \mathcal{CR}(\mathcal{S})$.
In the former setting it is clear that $\mathcal{T}_1\cap\mathcal{T}_2 \subsetneq \mathcal{T}_1$ and hence $K \ge 2$. We show that the latter setting contradicts the definitions of the critical arrival rate $\lambda^*$ and the collection of all critical subsets of job types (Defintion~\ref{def:critical_arrival_rate}) and the stability condition (Condition~\ref{cond:stab}). If $\mathcal{T}_1\cap\mathcal{T}_2 \notin \mathcal{CR}(\mathcal{S})$, then
\begin{equation}
\begin{array}{rcl}
N\lambda^* p(\mathcal{T}_1\cup\mathcal{T}_2) & = & N\lambda^* \left( p(\mathcal{T}_1) + p(\mathcal{T}_2) - p(\mathcal{T}_1\cap\mathcal{T}_2) \right) \\
& = &  \mu(\mathcal{T}_1) + \mu(\mathcal{T}_2) - N\lambda^*p(\mathcal{T}_1\cap\mathcal{T}_2) \\
& > &  \mu(\mathcal{T}_1) + \mu(\mathcal{T}_2) - \mu(\mathcal{T}_1\cap\mathcal{T}_2) \\
& \ge &   \mu(\mathcal{T}_1\cup\mathcal{T}_2). \\
\end{array}
\end{equation}
This violates the stability constraint induced by the subset of job types $\mathcal{T}_1\cup\mathcal{T}_2$. The latter inequality follows from the fact that $\mu(\mathcal{T}_1\cup\mathcal{T}_2) + \mu(\mathcal{T}_1\cap\mathcal{T}_2) \le \mu(\mathcal{T}_1) + \mu(\mathcal{T}_2)$. Indeed, if server $n$ is compatible with a job type in $\mathcal{T}_1\cap\mathcal{T}_2$, then it is compatible with a job type in $\mathcal{T}_1$ and a job type in $\mathcal{T}_2$ such that the term $\mu_n$ will occur twice on both sides of the inequality. If server $n$ is compatible with a job type in $\mathcal{T}_1\cup\mathcal{T}_2$ but not with a job type in $\mathcal{T}_1\cap\mathcal{T}_2$, then there exists at least one job type in $\mathcal{T}_1$ or $\mathcal{T}_2$ that server $n$ is compatible with. So, since server $n$ contributes once to the left hand side, and at least once to the right hand side, the above inequality holds.
This concludes the proof.
\end{proof}

As can be seen in Proposition~\ref{prop:pgf}, the expression for the PGF of the numbers of jobs of the various types depends on an enumeration of all vectors whose entries are distinct job types, i.e., all $m$-dimensional vectors $\boldsymbol{S}= [S_1,\dots,S_m]\in\mathcal{S}_m$ with $m=1,\dots,|\mathcal{S}|$.
We now introduce a different perspective on the ordered vectors of job types that focuses on their relation with the critical subsets rather than their lengths.

\begin{definition}[$k$-Critical vectors]\label{def:crit_vectors}
Let $\boldsymbol{T} = [T_1,T_2,...,T_m]\in\mathcal{S}_m$ be an ordered vector with $m$ distinct entries belonging to the set of job types~$\mathcal{S}$. Then $\boldsymbol{T}$ is called $k$-critical if 
$|\{ i \colon T_1 \cup \dots \cup T_i \in \mathcal{CR(S)}, i = 1,\dots, m \} | = k$.
In other words, $\boldsymbol{T}$ induces $k$ critical subsets when combining the respective job types one by one.
The collection of all $k$-critical vectors is denoted by $\mathcal{N}_k$ and $\mathcal{N}\coloneqq \cup_{k=0}^K \mathcal{N}_k$.
Moreover, for a fixed $k$-critical vector $\boldsymbol{T}\in\mathcal{N}_k$ we define 
$\text{CR}(\boldsymbol{T}) \coloneqq \{i\colon T_1\cup \dots \cup T_i \in \mathcal{CR(S)}\}$
as the set that contains all indices $i\in \{1,\dots,|\boldsymbol{T}|\}$ such that aggregation of the first $i$ entries of~$\boldsymbol{T}$ yields a critical subset of job types.
In particular, if such indices do not exist, i.e., $\text{CR}(\boldsymbol{T}) = \emptyset$, then $\boldsymbol{T}$ is referred to as a 0-critical vector.  
\end{definition}
Note that $\mathcal{N}_k$ is empty for all $k > K$ and that $\bigcup_{k=0}^K \mathcal{N}_k = \bigcup_{m=0}^{|\mathcal{S}|} \mathcal{S}_m$.

\begin{example}[N-model]\label{ex:Nmodel} 
Figure~\ref{fig:Nmodel} visualizes the N-model consisting of two job types and two servers and its stability region. The first job type is compatible with both servers, while the second job type is only compatible with the second server. Henceforth, the two job types are labeled as type-$\{1,2\}$ and type-$\{2\}$ jobs, respectively. We observe that there are three different heavy-traffic scenarios that can occur corresponding to the various edges and corner points of the stability region, labeled as $I$, $II$ and $III$ in Figure~\ref{fig:Nmodel_stab}. Table~\ref{tab:Nmodel_overview} summarizes these different heavy-traffic scenarios together with an illustration of the above definitions. 
\end{example}

\begin{table}[h]
\centering
\begin{tabular}{|c|c|c|c|c|l|} 
\hline
\textbf{Scenario} & \textbf{CRP} & $\boldsymbol{\mathcal{CR}(\mathcal{S})}$ & $\boldsymbol{\lambda^*}$ & $\boldsymbol{K}$ & \textbf{Critical vectors, $\boldsymbol{T\in\mathcal{N}_k}$} \\
\hline\hline
\multirow{2}{*}{I} & \multirow{2}{*}{Strong} & \multirow{2}{*}{$\{\mathcal{S}\}$} & \multirow{2}{*}{$\mu$} & \multirow{2}{*}{1}& $k=0$: $\emptyset, [\{1,2\}], [\{2\}] $ \\
& & & & & $k=1$: $ [\{1,2\},\{2\}], [\{2\},\{1,2\}] $ \\
     \cline{1-6}              
\multirow{2}{*}{II} & \multirow{2}{*}{Weak} & \multirow{2}{*}{$\{\{2\}\}$} & \multirow{2}{*}{$\mu_2/ (Np_{\{2\}})<\mu$} & \multirow{2}{*}{1}& $k=0$: $\emptyset, [\{1,2\}],[\{1,2\},\{2\}]  $ \\
                   & & & & &  $k=1$: $[\{2\}],[\{2\},\{1,2\}]  $ \\
       \cline{1-6}              
\multirow{3}{*}{III} & \multirow{3}{*}{No} & \multirow{3}{*}{$\{\{2\},\mathcal{S}\}$} & \multirow{3}{*}{$\mu$} & \multirow{3}{*}{2} & $k=0$: $\emptyset, [\{1,2\}]  $ \\
                   & & & & &  $k=1$: $[\{2\}],[\{1,2\},\{2\}] $ \\
                   & & & & &  $k=2$: $[\{2\},\{1,2\}] $   \\
\hline
\end{tabular}
\caption{An overview of the different heavy-traffic scenarios of the N-model for which $\mathcal{S} = \{\{1,2\},\{2\}\}$, using definitions of Subsection~\ref{subsec:HT_and_nonCRP}.}\label{tab:Nmodel_overview}
\end{table}

\subsection{CRP components}

In this paper we pursue a heavy-traffic analysis for the \emph{entire} boundary of the stability region and in particular extend the results from~\cite{Cardinaels2022} allowing for boundary points to lie on an intersection of multiple faces of the stability region. These intersections appear when multiple subsets of job types simultaneously approach the stability conditions (Condition~\ref{cond:stab}), i.e., there are several active capacity bottlenecks in heavy traffic. 
These kinds of scenarios yield a more intricate analysis and complex behavior, and are hence typically excluded from consideration in the literature.


To describe our results in Section~\ref{sec:main_results} and to connect them to the work of Af\`eche \emph{et al.}~\cite{Afeche2021} which focuses on the c.o.s.\ mechanism, we first review some of the concepts and notation from~\cite{Afeche2021}. Then we will explain how our set-up with critical subsets fits their vocabulary.

Given a bipartite graph structure representing the compatibility constraints between job types and servers, the collection of edges that yield a stable system is referred to as a \textit{matching}~$M$. From this matching a \textit{residual matching}~$\tilde{M}$ is obtained by solving a limiting ($\lambda = \lambda^*$) maximum-flow problem related to the system, then $\tilde{M}$ consists of all edges with a non-zero flow in the optimal solution.
Informally speaking, one can think of the residual matching~$\tilde{M}$ as a subset of the edges of the original matching~$M$ which will be dominant in the heavy-traffic regime. This residual matching~$\tilde{M}$ will induce a partitioning of the job types and servers in so-called \textit{CRP components}.
\begin{definition}[CRP components]\label{def:CRP_components}
Given the residual matching~$\tilde{M}$, the system can be partitioned in $K$ distinct connected components or CRP components.
They are labeled $\mathbb{C}_1, \dots, \mathbb{C}_K$, and each component consists of a subset of job types $\mathcal{C}_k$ and their \emph{asymptotically} compatible servers~$\mathcal{Z}_k$, i.e., $\mathbb{C}_k \coloneqq (\mathcal{C}_k,\mathcal{Z}_k)$.  
\end{definition}
\begin{remark}
Note that $\mathcal{Z}_k$ is a subset of the servers that the job types in~$\mathcal{C}_k$ are compatible with, i.e., $\mathcal{Z}_k  \subseteq \bigcup_{T\in\mathcal{C}_k} T$
as the residual matching~$\tilde{M}$ is a subset of the original matching~$M$.
\end{remark}

Using these CRP components, a \textit{directed acyclic graph} (DAG) can be obtained by creating an edge $(\mathbb{C}_i,\mathbb{C}_j)$ whenever there are job types in $\mathbb{C}_i$ that are compatible with any of the servers in $\mathbb{C}_j$~\cite[Lemma~2]{Afeche2021}. Building on this DAG, the different CRP components can be ordered.
\begin{definition}[Topological order] Let $\sigma = (\sigma(1),\dots,\sigma(K))$ be a permutation of $\{1,\dots,K\}$. Then $\left(\mathbb{C}_{\sigma(1)},\dots,\mathbb{C}_{\sigma(K)}\right)$ is called a topological order if for any directed arc $(\mathbb{C}_i,\mathbb{C}_j)$ associated with the above-mentioned DAG, it holds that $\sigma^{-1}(j) < \sigma^{-1}(i)$. In words, if component $\mathbb{C}_i$ can forward \textup{(}some of\textup{)} its jobs to component~$\mathbb{C}_j$, then component~$\mathbb{C}_j$ must occur earlier in the topological ordering. The set of all permutations yielding topological orders is denoted by $\Sigma_K$.
\end{definition}

\begin{example} \label{ex:N_model_CRP_components}
Consider the N-model in Example~\ref{ex:Nmodel}, assume that $(p_{\{1,2\}},p_{\{2\}}) = (\mu_1,\mu_2)/(\mu_1+\mu_2)$. Hence, scenario III will occur in the heavy-traffic regime \textup{(}Table~\ref{tab:Nmodel_overview}\textup{)}. The residual matching~$\tilde{M}$ is given by two edges, i.e., connecting job type~$\{1,2\}$ to server 1 and job type~$\{2\}$ to server 2. This disconnects the compatibility graph and results in two CRP components, $ \mathbb{C}_1 = (\mathcal{C}_1,\mathcal{Z}_1) = (\{1,2\}, \{1\})$ and $ \mathbb{C}_2 = (\mathcal{C}_2,\mathcal{Z}_2) =(\{2\}, \{2\})$. Since the job type in component $\mathbb{C}_1$ is compatible with a server in component $\mathbb{C}_2$, the associated DAG is simply given by the edge $(\mathbb{C}_1,\mathbb{C}_2)$. Hence, the only permutation yielding a valid topological ordering of the CRP components is $\Sigma_2 = \{(2,1)\}$.
\end{example}

Using the above definitions, we can trace the concepts of CRP components back to our critical subsets of job types (Definition~\ref{def:critical_subset}).
\begin{construction}\label{construction} ~\newline\vspace{-0.45cm}
\begin{enumerate}
\item Enumerate all $K$ distinct CRP components: $\mathbb{C}_k = (\mathcal{C}_k,\mathcal{Z}_k)$ with $k=1,\dots,K$.
\item Enumerate all topological orders: $\left(\mathbb{C}_{\sigma(1)},\dots,\mathbb{C}_{\sigma(K)}\right)$ for all $\sigma\in\Sigma_K$.
\item Define for all $k=1,\dots,K$ and $\sigma\in\Sigma_K$
\begin{equation}
T_{\sigma,k} \coloneqq \bigcup\limits_{i=1}^k \mathcal{C}_{\sigma(i)}
\quad \text{and}\quad
Z_{\sigma,k} \coloneqq \bigcup\limits_{i=1}^k \mathcal{Z}_{\sigma(i)}.
\end{equation}
\end{enumerate}
\end{construction}

\begin{remark}The obtained sets $(T_{\sigma,k})_{\sigma,k}$ do not all have to be different. For instance, if $\lambda^* = \mu$, then the full set of job types~$\mathcal{S}$ is a critical subset, or each job type is connected to at least one (compatible) server in the residual matching~$\tilde{M}$. Hence, $T_{\sigma,K} \equiv \mathcal{S}$ for all $\sigma\in\Sigma_K$ .
\end{remark}

\begin{theorem}\label{th:construction}
Construction~\ref{construction} yields all critical subsets of job types, i.e.,
$
\mathcal{CR}(\mathcal{S}) = \{T_{\sigma,k} \colon k=1,\dots,K, \sigma\in\Sigma_K \}.
$
\end{theorem}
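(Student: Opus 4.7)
The plan is to establish the two inclusions $\{T_{\sigma,k}\}\subseteq \mathcal{CR}(\mathcal{S})$ and $\mathcal{CR}(\mathcal{S})\subseteq \{T_{\sigma,k}\}$ separately, leveraging the fact that every CRP component $\mathbb{C}_k=(\mathcal{C}_k,\mathcal{Z}_k)$ is itself critically loaded, i.e.,
\begin{equation}
N\lambda^{*}\,p(\mathcal{C}_k)=\mu(\mathcal{Z}_k), \qquad k=1,\dots,K,
\end{equation}
which follows from the characterization of $\tilde M$ as the support of the optimal max-flow solution at $\lambda=\lambda^{*}$ in \cite{Afeche2021}.

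For the forward direction, fix $\sigma\in \Sigma_K$ and $k\in\{1,\dots,K\}$. The first step is to show that, even in the \emph{original} compatibility graph, the set of servers reachable from job types in $T_{\sigma,k}$ is exactly $Z_{\sigma,k}$. Indeed, if some $S\in \mathcal{C}_{\sigma(i)}$ with $i\le k$ is compatible with a server $n\in \mathcal{Z}_{\sigma(j)}$, then by construction the DAG contains the arc $(\mathbb{C}_{\sigma(i)},\mathbb{C}_{\sigma(j)})$, so the topological order forces $j<i\le k$, placing $n$ in $Z_{\sigma,k}$; the reverse inclusion is immediate from $\mathcal{Z}_{\sigma(i)}\subseteq \bigcup_{S\in \mathcal{C}_{\sigma(i)}} S$. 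Hence $\mu(T_{\sigma,k})=\mu(Z_{\sigma,k})$, and summing the component balances over $i=1,\dots,k$ yields $N\lambda^{*}p(T_{\sigma,k})=\mu(T_{\sigma,k})$, so $T_{\sigma,k}\in \mathcal{CR}(\mathcal{S})$.

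For the reverse direction, I take any $\mathcal{T}\in \mathcal{CR}(\mathcal{S})$ and proceed in two substeps. First, I would show that $\mathcal{T}$ is a union of \emph{full} CRP components. The key input is a lattice property of critical subsets: if $\mathcal{T}_1,\mathcal{T}_2\in \mathcal{CR}(\mathcal{S})$, then so are $\mathcal{T}_1\cap \mathcal{T}_2$ and $\mathcal{T}_1\cup \mathcal{T}_2$. This is essentially already contained in the proof of Lemma~\ref{lem:CRPvsK}, where the inequality $\mu(\mathcal{T}_1\cup\mathcal{T}_2)+\mu(\mathcal{T}_1\cap\mathcal{T}_2)\le \mu(\mathcal{T}_1)+\mu(\mathcal{T}_2)$, combined with stability, forces equalities. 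Combining this with the atomicity of each $\mathcal{C}_k$ (i.e., $\mathcal{C}_k$ is itself critical by the forward direction and admits no proper non-empty critical sub-subset, a consequence of the construction of the residual matching), one shows $\mathcal{T}\cap \mathcal{C}_k\in\{\emptyset,\mathcal{C}_k\}$ for every $k$, so $\mathcal{T}=\bigcup_{i\in I}\mathcal{C}_i$ for some $I\subseteq\{1,\dots,K\}$.

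Second, I would argue that $I$ is a down-set in the DAG: if $i\in I$ and $(\mathbb{C}_i,\mathbb{C}_j)$ is an arc, then some $S\in \mathcal{C}_i\subseteq \mathcal{T}$ is compatible with a server in $\mathcal{Z}_j$, so $\mathcal{Z}_j$ contributes to $\mu(\mathcal{T})$, and balancing this against $N\lambda^{*}p(\mathcal{T})=\mu(\mathcal{T})$ together with the component balances forces $\mathcal{C}_j\subseteq\mathcal{T}$, i.e., $j\in I$. Since any down-set of a finite DAG is the prefix of some topological order, one obtains $\mathcal{T}=T_{\sigma,|I|}$ for some $\sigma\in \Sigma_K$. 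I expect the main obstacle to be the atomicity step used in the first substep: proving that no proper non-empty sub-subset of a CRP component is critical requires exploiting how $\tilde M$ tightly couples job types and servers within a single component, and this is where the argument will require the most care rather than being a routine bookkeeping exercise.
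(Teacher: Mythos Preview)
Your forward inclusion is correct and essentially identical to the paper's argument (Lemma~\ref{lem:construction_1}).

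The reverse inclusion contains a genuine error. You assert that each $\mathcal{C}_k$ is itself critical ``by the forward direction'', but this is false whenever $\mathbb{C}_k$ is not a leaf of the DAG. In the N-model (scenario~III of Table~\ref{tab:Nmodel_overview}, Example~\ref{ex:N_model_CRP_components}), $\mathcal{C}_1=\{\{1,2\}\}$ has compatible-server set $\{1,2\}$, so $\mu(\mathcal{C}_1)=\mu_1+\mu_2$ while $N\lambda^*p(\mathcal{C}_1)=\mu_1$; hence $\mathcal{C}_1\notin\mathcal{CR}(\mathcal{S})$. The forward direction only yields criticality of the sets $T_{\sigma,k}$, and $\mathcal{C}_k$ is of this form precisely when $\mathbb{C}_k$ can be placed first in some topological order, i.e., when it is a leaf. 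Consequently your lattice-intersection step ``$\mathcal{T}\cap\mathcal{C}_k$ is critical, hence $\emptyset$ or $\mathcal{C}_k$'' breaks down; the obstacle you flag (no proper non-empty critical sub-subset of $\mathcal{C}_k$) is not the binding one.

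Your second substep---that the index set $I$ must be a down-set once $\mathcal{T}=\bigcup_{i\in I}\mathcal{C}_i$ is established---is correct. The paper handles the reverse inclusion differently (Lemma~\ref{lem:construction_2}): it works on the server side, setting $X=\bigcup_{S\in\mathcal{T}}S$ and performing a three-case analysis on whether $X$ is a union of the $\mathcal{Z}_k$'s and, if so, whether it equals some $Z_{\sigma,k}$, invoking \cite[Lemma~3(ii),(iii)]{Afeche2021} to rule out the bad cases. If you want to rescue a job-side lattice approach, replace $\mathcal{C}_k$ by $V_k$: each $V_k$ \emph{is} critical by the forward direction, and one can try to show these are exactly the join-irreducibles of $\mathcal{CR}(\mathcal{S})\cup\{\emptyset\}$; but ruling out other join-irreducibles again requires the internal strict-slack property of CRP components from \cite{Afeche2021}, so little is gained over the paper's route.
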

The proof of Theorem~\ref{th:construction} builds on arguments in the proofs of \cite[Lemmas 3 and 4]{Afeche2021} which focus on the properties of the CRP components and is deferred to Appendix~\ref{app:proof_construction}. 



\begin{example}
Applying Construction~\ref{construction} to Example~\ref{ex:N_model_CRP_components}, we observe that we obtain $T_{\sigma,1} = \{\{2\}\}, T_{\sigma,2} = \{\{2\},\{1,2\}\} = \mathcal{S}$ for the job types and  $Z_{\sigma,1} = \{2\}, Z_{\sigma,2} = \{1,2\}$ for the servers. Indeed $\mathcal{CR}(\mathcal{S}) = \{T_{\sigma,1},T_{\sigma,2} \} = \{\{2\},\mathcal{S}\}$ as also depicted in Table~\ref{tab:Nmodel_overview}.
\end{example}

\begin{remark} \label{rem:K}
As the subset of job types of different CRP components, i.e., $(\mathcal{C}_k)_k$, are non-empty, we have that
$
T_{\sigma,1} \subsetneq T_{\sigma,2} \subsetneq \dots \subsetneq  T_{\sigma,K}
$
for any $\sigma\in\Sigma_K$. Hence, the notions of $K$ in Definition~\ref{def:depthK} (depth of the critical subsets) and Definition~\ref{def:CRP_components} (the number of CRP components) coincide.
\end{remark}


In the remainder of this manuscript we will adhere to the following notation.
\begin{notation} 
The full set of job types is always denoted by $\mathcal{S}$, and any subset of $\mathcal{S}$ will also be written in calligraphic font, e.g., $\mathcal{T}\subseteq \mathcal{S}$, while individual job types are written in regular Roman font, e.g., $T\in\mathcal{S}$. 
Vectors receive a bold letter and their elements are denoted by the same \textup{(}non-bold\textup{)} letter and a subscript, e.g., $\boldsymbol{T} = [T_1,T_2,\dots,T_j]$. We use $\mathbb{N}$ to denote the set of natural numbers, including zero.
\end{notation}

\section{Main results}\label{sec:main_results}
In this section we state the main heavy-traffic result in Theorem~\ref{cor:mixture_dist}, which applies to both the c.o.c.\ and the c.o.s.\ mechanism. 
As the notation in Section~\ref{sec:preliminaries} already alludes to, we assume that the fractions of jobs of each type remain fixed as the system approaches its heavy-traffic limit, i.e., $(\lambda_S)_{S\in\mathcal{S}} = N\lambda(p_S)_{S\in\mathcal{S}}$ and $\lambda\uparrow\lambda^*$.
A generalization of the main result where this assumption is relaxed is formulated in Theorem~\ref{th:general_results_extension} in Appendix~\ref{subsec:extension}.
In the proofs below, we will focus on the c.o.c.\ mechanism, and details for the c.o.s.\ mechanism are deferred to Appendix~\ref{app:main_results_cos}.\\

For notational convenience, we define  
\begin{equation}\label{eq:beta}
\beta(\boldsymbol{T}) \coloneqq \prod\limits_{i=1}^{|\boldsymbol{T}|}
\frac{N\lambda^*p_{T_i}}{\mu(\boldsymbol{T},i)}
 \prod\limits_{i\notin \text{CR}(\boldsymbol{T})} \left( 1 - \frac{N\lambda^* p(\boldsymbol{T},i)}{\mu(\boldsymbol{T},i)} \right)^{-1} 
\end{equation}
for any $\boldsymbol{T}\in\mathcal{N}$, 
\begin{equation} \label{eq:beta_N_K}
\beta(\mathcal{N}_k) \coloneqq \sum\limits_{\boldsymbol{T}\in\mathcal{N}_k} \beta(\boldsymbol{T})
\end{equation}
for $k=0,\dots,K$, and
\begin{equation}\label{eq:P_star_T}
\mathbb{P}^*(\boldsymbol{T}) \coloneqq \frac{\beta(\boldsymbol{T})}{\beta(\mathcal{N}_K)}
\end{equation}
for any $\boldsymbol{T}\in\mathcal{N}_K$. The value $\mathbb{P}^*(\boldsymbol{T})$ represents the limiting probability that the state of the system corresponds to the ordered vector of job types~$\boldsymbol{T}$, i.e., the oldest job in the system is of type $T_1$, the oldest job that is not of type $T_1$ is of type $T_2$, etc.

The expression for the PGF in Proposition~\ref{prop:pgf} can then be leveraged to prove the following convergence result. 

\begin{proposition}[Convergence of PGFs]\label{th:limit_gf}
With the definitions as in Section~\ref{sec:preliminaries},
the joint PGF of the vector of \textup{(}scaled\textup{)} queue lengths, i.e., $\mathbb{E}\bigr[\prod_{S\in\mathcal{S}} z_S^{Q_S}\bigr]$ with $z_S \coloneqq \exp\bigl(-(1-\frac{\lambda}{\lambda^*})t_S\bigl)$, converges to
\begin{equation}\label{eq:GF}
\sum\limits_{\boldsymbol{T}\in\mathcal{N}_K} \mathbb{P}^*(\boldsymbol{T})
\prod\limits_{i\in \text{CR}(\boldsymbol{T})}
\Bigl(1+ \sum_{j=1}^i t_{T_j}\frac{p_{T_j}}{p(\boldsymbol{T},i)}\Bigl)^{-1},
\end{equation}
as $\lambda \uparrow \lambda^*$ with $t_S \ge 0$ for all $S\in\mathcal{S}$.
\end{proposition}


Using L\'evy's Continuity Theorem~\cite{Jacod2000} and the above expression, it can be shown that the (scaled) numbers of jobs of the various types converge in distribution to a random vector $(X_S)_{S\in\mathcal{S}}$ associated with a mixture distribution. There is a mixture weight for each $\boldsymbol{T}\in\mathcal{N}_K$ and a corresponding mixture component which consists of a weighted sum of $K$~independent exponential random variables.
Define the indices $i_1,\dots,i_K$ such that  $\{i_1,\dots,i_K\} = \mathrm{CR}(\boldsymbol{T})$ with $i_1<i_2<\dots<i_K$ and $j_{S}(\boldsymbol{T})$ the position of $S$ in $\boldsymbol{T}$ such that $T_{j_{S}(\boldsymbol{T})} = S$ for any $S\in\mathcal{S}$ and $\boldsymbol{T}\in\mathcal{N}_K$.
Let $\left(I_{\boldsymbol{T}}\right)_{\boldsymbol{T}\in\mathcal{N}_K}
= \boldsymbol{e}(\tilde{\boldsymbol{T}})$ with probability $\mathbb{P}^*(\tilde{\boldsymbol{T}})$ for any $\tilde{\boldsymbol{T}}\in\mathcal{N}_K$ and define $\boldsymbol{e}(\tilde{\boldsymbol{T}})$ as a $|\mathcal{N}_K|$-dimensional unit vector with a one entry for the location corresponding to the ordered vector $\tilde{\boldsymbol{T}}$, $\tilde{\boldsymbol{T}}\in\mathcal{N}_K$. Then, the \textup{(}random\textup{)} coefficients are given by
\begin{equation}\label{eq:P_k_S_random}
P_{k,S} \coloneqq \sum\limits_{\boldsymbol{T}\in\mathcal{N}_K} I_{\boldsymbol{T}} \frac{p_S}{p(\boldsymbol{T},i_k)}\mathds{1}\{i_k \ge j_S(\boldsymbol{T})\},
\end{equation}
for any $S\in\mathcal{S}$ and $k=1,\dots,K$, such that
\begin{equation}\label{eq:X_s}
(X_S)_{S\in\mathcal{S}}
\overset{d}{=}
\Bigl( \sum\limits_{k=1}^K P_{k,S} U_k 
\Bigl)_{S\in\mathcal{S}},
\end{equation}
with $U_1,\dots,U_K$ independent and exponentially distributed random variables with unit mean.

\begin{theorem}[Convergence to a mixture distribution] \label{cor:mixture_dist}
The vector of \textup{(}scaled\textup{)} queue lengths, $(1-\frac{\lambda}{\lambda^*})(Q_S)_{S\in\mathcal{S}}$, converges in distribution to a random vector associated with the mixture distribution as specified above, i.e.,
\begin{equation}
\Bigl(1-\frac{\lambda}{\lambda^*}\Bigl) \left( Q_S \right)_{S\in\mathcal{S}}
\overset{d}{\rightarrow}
(X_S)_{S\in\mathcal{S}},
\end{equation}
as $\lambda\uparrow\lambda^*$.
\end{theorem}

Hence, the limiting (scaled) queue length vector can be written as a linear combination of the random vectors $\boldsymbol{P}_1,\dots,\boldsymbol{P}_K$ as in~\eqref{eq:P_k_S_random} with scalar coefficients given by $K$ independent and exponentially distributed random variables with unit mean. 
This means that the limiting vector of (scaled) queue lengths lives in a $K$-dimensional cone spanned by these random vectors $\boldsymbol{P}_1,\dots,\boldsymbol{P}_K$.

\begin{figure}[h]
\centering
\begin{subfigure}[b]{0.49\textwidth}
\centering
\includegraphics[scale=0.8]{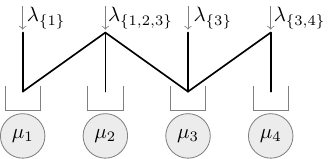}
\caption{Sketch of the system.}\label{fig:vb_algemeen_resultaat}
\end{subfigure}
\begin{subfigure}[b]{0.49\textwidth}
\centering
\includegraphics[scale = 0.8]{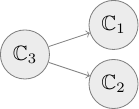}
\subcaption{Associated DAG.}\label{fig:DAG_algemeen_resultaat}
\end{subfigure}
\caption{An illustrative example with model parameters as defined in Example~\ref{ex:example_main_result}.}\label{fig:general_result}
\end{figure}

\begin{example} \label{example:mixture_dist} 
Consider the four-server system as depicted in Figure~\ref{fig:vb_algemeen_resultaat} with  arrival rates 
\begin{equation}\label{eq:example_arrival rates}
N\lambda(p_{\{1\}}, p_{\{1,2,3\}},p_{\{3\}}, p_{\{3,4\}}) = 4\lambda\Bigl(\frac{1}{4},\frac{1}{4},\frac{1}{6},\frac{1}{3}\Bigl) = \Bigl(\lambda,\lambda,\frac{2}{3}\lambda,\frac{4}{3}\lambda\Bigl)
\end{equation}
and identical processing speeds per server $(\mu_n\equiv\mu)$. Then the critical arrival rate $\lambda^* = \mu$ and the set of critical subsets is given by
$
\mathcal{CR}(\mathcal{S}) = \left\{
\{1\}, \{\{3\},\{3,4\}\}, \{\{1\},\{3\},\{3,4\}\}, \mathcal{S}
\right\}.
$
There are four critical vectors in $\mathcal{N}_K$, with $K=3$, which are presented in the first column of Table~\ref{tab:mixture_dist}. The associated mixture weights are computed using the expression in~\eqref{eq:beta}, and the various mixture components of the limiting random vector 
$(X_S)_{S\in\mathcal{S}}$ are obtained from~\eqref{eq:X_s}.
\end{example}

\begin{table}[h]
\centering
\begin{tabular}{|c|c|c|}
\hline
$\boldsymbol{T\in\mathcal{N}_K}$               & \textbf{mixt. weight} & \textbf{mixt. component of} $\boldsymbol{\left( X_{\{1\}}, X_{\{1,2,3\}},X_{\{3\}},X_{\{3,4\}}\right)}$   \\
\hline\hline
$\left[ \{1\},\{3\},\{3,4\},\{1,2,3\}\right]$ &  $4/9$            & \multirow{2}{*}{$\left( U_1 + \frac{1}{3}U_2+\frac{1}{4}U_3, \frac{1}{4}U_3, \frac{2}{9}U_2+\frac{1}{6}U_3, \frac{4}{9}U_2+ \frac{1}{3}U_3\right)$} \\
 $\left[ \{1\},\{3,4\},\{3\},\{1,2,3\}\right]$                                              &   $2/9$       &                   \\
 \hline 
$\left[\{3\},\{3,4\},\{1\},\{1,2,3\}\right]$                                                                                            &   $2/9$   & \multirow{2}{*}{$\left( \frac{1}{3}U_2+\frac{1}{4}U_3, \frac{1}{4}U_3, \frac{1}{3}U_1 + \frac{2}{9}U_2+\frac{1}{6}U_3, \frac{2}{3}U_1 + \frac{4}{9}U_2+ \frac{1}{3}U_3\right)$} \\
$\left[ \{3,4\},\{3\},\{1\},\{1,2,3\}\right]$                                                                                                                                          & $1/9$  &   \\
\hline                
\end{tabular}
\caption{The limiting mixture distribution $(X_S)_{S\in\mathcal{S}}$ in Example~\ref{example:mixture_dist} obtained after applying Theorem~\ref{cor:mixture_dist}, with $U_1, U_2$ and $U_3$ independent exponentially distributed random variables with unit mean.}\label{tab:mixture_dist}
\end{table}

Moreover, it follows that the total number of jobs converges to a random variable with an Erlang distribution with parameters 1 and $K$.

\begin{corollary}[Convergence of  the total number of jobs]\label{cor:total_num_jobs}
With the definitions as in Section~\ref{sec:preliminaries}, the PGF of the \textup{(}scaled\textup{)} total number of jobs, i.e., $\mathbb{E}\left[z^Q\right]$ with $z\coloneqq \exp( - (1-\frac{\lambda}{\lambda^*}) t)$, converges to $( 1+ t)^ {-K}$
as $\lambda\uparrow\lambda^*$ with $t\ge 0$. Hence, the \textup{(}scaled\textup{)} total number of jobs, $(1-\frac{\lambda}{\lambda^*})Q$, converges in distribution to a random variable with an Erlang distribution with parameters 1 and $K$.
\end{corollary}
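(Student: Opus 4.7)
The plan is to reduce Corollary~\ref{cor:total_num_jobs} to a direct specialization of Theorem~\ref{th:limit_gf}. Since $Q = \sum_{S\in\mathcal{S}} Q_S$, the PGF $\mathbb{E}[z^{Q}]$ coincides with the joint PGF $\mathbb{E}\bigl[\prod_{S\in\mathcal{S}} z_S^{Q_S}\bigr]$ evaluated at $z_S \equiv z$, which in the parametrization of Theorem~\ref{th:limit_gf} corresponds to taking $t_S = t$ uniformly for all $S\in\mathcal{S}$. So the first step is simply to substitute $t_S = t$ into~\eqref{eq:GF} and observe what simplifies.

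The key simplification occurs inside the inner factor: with $t_S \equiv t$,
\begin{equation}
\sum_{j=1}^{i} t_{T_j}\frac{p_{T_j}}{p(\boldsymbol{T},i)} = t\cdot\frac{1}{p(\boldsymbol{T},i)}\sum_{j=1}^{i} p_{T_j} = t,
\end{equation}
because $\sum_{j=1}^{i} p_{T_j} = p(\boldsymbol{T},i)$ by definition~\eqref{eq:abbreviated}. Therefore each factor in the product becomes $(1+t)^{-1}$, independent of $i$, and the product over $i\in \text{CR}(\boldsymbol{T})$ collapses to $(1+t)^{-|\text{CR}(\boldsymbol{T})|}$. The next observation is that $\boldsymbol{T}\in\mathcal{N}_K$ means precisely that $|\text{CR}(\boldsymbol{T})| = K$ (Definition~\ref{def:crit_vectors}), so every term in the outer sum carries exactly the same factor $(1+t)^{-K}$.

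Using that $\sum_{\boldsymbol{T}\in\mathcal{N}_K} \mathbb{P}^{*}(\boldsymbol{T}) = 1$ by construction~\eqref{eq:P_star_T}, the limiting PGF reduces to $(1+t)^{-K}$. The final step is to recognise that $(1+t)^{-K}$ is the Laplace transform of the Erlang distribution with shape~$K$ and rate~$1$, and to invoke L\'evy's Continuity Theorem to lift convergence of PGFs (equivalently, Laplace transforms of the scaled variable $(1-\lambda/\lambda^{*})Q$) to convergence in distribution.

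I do not anticipate a genuine obstacle: everything hinges on the identity $\sum_{j=1}^{i} p_{T_j} = p(\boldsymbol{T},i)$ which makes the dependence on $\boldsymbol{T}$ disappear after the specialization $t_S \equiv t$, together with the fact that $|\text{CR}(\boldsymbol{T})| = K$ is constant across $\mathcal{N}_K$. The only mildly subtle point is ensuring that the convergence in Theorem~\ref{th:limit_gf}, originally stated in terms of the joint PGF at $z_S = \exp(-(1-\lambda/\lambda^{*})t_S)$, continues to hold on the diagonal $t_S \equiv t$, but this is immediate because the convergence is pointwise on $[0,\infty)^{|\mathcal{S}|}$ and the diagonal lies in its interior.
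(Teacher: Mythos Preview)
Your proposal is correct and follows essentially the same approach as the paper: substitute $t_S = t$ in the limiting expression of Theorem~\ref{th:limit_gf}, use that $|\text{CR}(\boldsymbol{T})| = K$ for all $\boldsymbol{T}\in\mathcal{N}_K$ so that the product collapses to $(1+t)^{-K}$ uniformly in $\boldsymbol{T}$, and then invoke L\'evy's Continuity Theorem. The paper's own proof simply labels the algebraic step as ``straightforward manipulations'', whereas you spell out the key identity $\sum_{j=1}^{i} p_{T_j} = p(\boldsymbol{T},i)$ explicitly, but the argument is the same.
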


The proofs of Proposition~\ref{th:limit_gf}, Theorem~\ref{cor:mixture_dist} and Corollary~\ref{cor:total_num_jobs} are deferred to Subsection~\ref{subsec:proof_part1}. \\

The complexity of the expressions for the mixture weights and mixture components makes it difficult in general to get grip on the system's performance in heavy traffic. 
Careful consideration of Example~\ref{example:mixture_dist} already suggests that the stochastic representation for $(X_S)_{S\in\mathcal{S}}$ in~\eqref{eq:X_s} can be simplified.
Indeed, two pairs of ordered vectors yield the same mixture component, so that the original mixture distribution in Table~\ref{tab:mixture_dist} can be rewritten as a distribution with only two mixture components and the adapted mixture weights $\frac{2}{3}$ and $\frac{1}{3}$, respectively.
This notion will be formalized later in Lemma~\ref{lem:mixture_components} in Subsection~\ref{subsec:proof_part2}.
In fact, building on the properties of the associated DAG, we can even show that the intricate mixture distribution in~\eqref{eq:X_s} is equivalent to a non-mixture distribution as specified later. In order to demonstrate that, we introduce the following concepts related to the associated DAG.  

\begin{definition}[Subgraphs and roots of the DAG]
For any $k=1,\dots,K$, consider the subgraph of the associated DAG rooted at the critical component $\mathbb{C}_k$ and let $V_k$ denote the collection of all job types that occur in this rooted subgraph.
Then
$
p(V_k) \coloneqq \sum_{S\in V_k} p_S
$ 
 represents the sum of the arrival probabilities of all job types that occur in $V_k$. 
 Let $\mathcal{R}_K$ denote the set of the root nodes of the DAG.
\end{definition}

Note that $\mathcal{R}_K$ is always non-empty as the associated DAG has no (directed) cycles by definition \cite[Lemma 2]{Afeche2021}.

The stochastic representation for $(X_S)_{S\in\mathcal{S}}$ in~\eqref{eq:X_s} can be simplified under the following assumption on the structure of associated DAG.

\begin{assumption}[Root partitioning of DAG] \label{cond:root}
 For any $r\in \mathcal{R}_K$, define $V_r$ as the set of nodes of the subgraph of the associated DAG with root node $r$. Assume that the root nodes yield a disjoint partitioning of the nodes in the DAG, i.e., for any $\mathcal{C}_k$ with $k=1,\dots,K$, there is precisely one $r\in \mathcal{R}_K$ such that $\mathcal{C}_k \in V_r$.
\end{assumption}

It is worth emphasizing that this assumption is satisfied in a wide range of systems and non-CRP scenarios, including but certainly not restricted to, so-called nested systems as also studied in~\cite{Anton2020,Gardner2017scheduling,Gardner2020}. Indeed, the compatibility constraints of the system in Example~\ref{example:mixture_dist} do not yield a nested system (Figure~\ref{fig:vb_algemeen_resultaat}), yet the associated DAG satisfies Assumption~\ref{cond:root} (Figure~\ref{fig:DAG_algemeen_resultaat}). A further discussion is provided in Appendix~\ref{app:root_partition}.

The simplified result then states that the vector of (scaled) queue lengths converges in distribution to a random vector where the entries are linear combinations of $K$ vectors $\boldsymbol{\hat{p}}_1,\dots,\boldsymbol{\hat{p}}_K$ whose (deterministic) entries are determined by (the subgraphs of) the DAG associated with the system. In particular, define 
\begin{equation}
\label{eq:prob_type_rooted_tree}
\left(\hat{p}_{k,S} \right)_{S\in\mathcal{S}}\coloneqq \Bigl( \frac{p_S}{p(V_k)}\mathds{1}\left\{S\in V_k \right\} \Bigl)_{S\in\mathcal{S}}
\end{equation}
for all $k=1,\dots,K$. The scalar coefficients of the linear combination are given by $K$ independent and exponentially distributed random variables with unit mean.
Thus, the joint queue length vector exhibits a \emph{state space collapse} onto a $K$-dimensional cone spanned by the vectors $\boldsymbol{\hat{p}}_1,\dots,\boldsymbol{\hat{p}}_K$, which is one-dimensional only if $K=1$, i.e., the CRP condition is satisfied.

\begin{theorem}[Convergence to a non-mixture distribution]\label{th:general_results}
If Assumption~\ref{cond:root} holds, then the vector of \textup{(}scaled\textup{)} queue lengths, $\bigl(1-\frac{\lambda}{\lambda^*}\bigl) \left( Q_S\right)_{S\in\mathcal{S}}$, converges in distribution to $( Y_S)_{S\in\mathcal{S}}$, i.e., 
\begin{equation}
\Bigl(1-\frac{\lambda}{\lambda^*}\Bigl)( Q_S)_{S\in\mathcal{S}}
\overset{d}{\rightarrow}
( Y_S)_{S\in\mathcal{S}},
\end{equation}
as $\lambda\uparrow \lambda^*$, with
\begin{equation}\label{eq:general_result_RV}
( Y_S)_{S\in\mathcal{S}}
\overset{d}{=}
\Bigl(\sum\limits_{k=1}^K \hat{p}_{k,S}   U_k \Bigl)_{S\in\mathcal{S}},
\end{equation}
$\hat{p}_{k,S}$ as defined in~\eqref{eq:prob_type_rooted_tree} and $U_1,\dots,U_K$ independent and exponentially distributed random variables with unit mean. 
\end{theorem}

As already established in Corollary~\ref{cor:total_num_jobs}, the (scaled) total number of jobs converges to a random variable with an Erlang distribution with parameters 1 and $K$ since $\sum_{S\in\mathcal{S}} \hat{p}_{k,S} = 1$ for all $k=1,\dots,K$, and hence
\begin{equation}
\Bigl(1-\frac{\lambda}{\lambda^*}\Bigl)\sum\limits_{S\in\mathcal{S}} Q_S
\overset{d}{\rightarrow}
\sum\limits_{k=1}^K  U_k.
\end{equation}

We now provide an example to illustrate the simplified result. A further discussion of Theorem~\ref{th:general_results} is provided in the following subsection.
\begin{example} \label{ex:example_main_result}
For the system described in Example~\ref{example:mixture_dist}, the associated DAG has edges $(\mathbb{C}_3,\mathbb{C}_1)$ and $(\mathbb{C}_3,\mathbb{C}_2)$ with $\mathbb{C}_1 = (\{1\},\{1\})$, $\mathbb{C}_2 = (\{\{3\},\{3,4\}\},\{3,4\})$ and $\mathbb{C}_3 = (\{1,2,3\},\{2\})$, see Figure~\ref{fig:DAG_algemeen_resultaat}. Hence, the job types belonging to nodes of the subgraphs rooted at the various CRP components are given by 
$V_1 = \{1\}, V_2 = \{\{3\},\{3,4\}\}, V_3 = \mathcal{S}$. Theorem~\ref{th:general_results} yields
\begin{equation}
\begin{array}{rl}
& \bigl(1-\frac{\lambda}{\mu} \bigl) \left( Q_{\{1\}}, Q_{\{1,2,3\}},Q_{\{3\}}, Q_{\{3,4\}} \right)\\
\overset{d}{\rightarrow}
&
\bigl( 
U_1 + U_3 p_{\{1\}},
 U_3 p_{\{1,2,3\}},
  U_2 \frac{p_{\{3\}}}{p_{\{3\}}+p_{\{3,4\}}}+ U_3 p_{\{3\}},
   U_2 \frac{p_{\{3,4\}}}{p_{\{3\}}+p_{\{3,4\}}}+ U_3p_{\{3,4\}}
   \bigl)\\
 = &  \left(U_1 + \frac{1}{4}U_3,\frac{1}{4}U_3, \frac{1}{3}U_2 + \frac{1}{6}U_3 , \frac{2}{3}U_2 + \frac{1}{3}U_3 \right)
\end{array}
\end{equation}
as $\lambda\uparrow\mu$ with $U_1,U_2,U_3$ independent and exponentially distributed random variables with unit mean.
This example highlights that the simplified result in Theorem~\ref{th:general_results} is \emph{not} obtained by showing that $\boldsymbol{P}_k = \boldsymbol{\hat{p}}_k$ for all $k$ under Assumption~\ref{cond:root}, but rather relies on subtle relationships between the model parameters and the properties of independent and exponentially distributed random variables. Indeed, observe that $\boldsymbol{\hat{p}}_1 = [1,0,0,0]$, while $\boldsymbol{P}_1 = [1,0,0,0]$ with probability~$\frac{2}{3}$ or $[0,0,\frac{1}{3},\frac{2}{3}]$ with probability~$\frac{1}{3}$.
In fact, the subscript $k$ in~\eqref{eq:P_k_S_random} specifically represents the depths of critical subsets induced by the ordered vectors $\boldsymbol{T}$, while the subscript $k$ in~\eqref{eq:prob_type_rooted_tree} corresponds to the different critical components which can be labeled arbitrarily.
\end{example}

The result in Theorem~\ref{th:general_results} is established by proving that the Laplace transforms of $(X_S)_{S\in\mathcal{S}}$ and $(Y_S)_{S\in\mathcal{S}}$ coincide after simplification, as formalized in the next proposition.

\begin{proposition}\label{prop:equality_GF}
For $t_S\ge 0$ for all $S\in\mathcal{S}$, it holds under Assumption~\ref{cond:root} that
\begin{equation}
\mathbb{E}\Bigr[ \prod\limits_{S\in\mathcal{S}} \mathrm{e} ^{-t_S X_S}\Bigr] = \mathbb{E}\Bigr[ \prod\limits_{S\in\mathcal{S}} \mathrm{e} ^{-t_S Y_S}\Bigr],
\end{equation}
with 
the former Laplace transform as in~\eqref{eq:GF} and 
\begin{equation}\label{eq:equality_GF}
\mathbb{E}\Bigr[ \prod\limits_{S\in\mathcal{S}} \mathrm{e} ^{-t_S Y_S}\Bigr] 
= \prod\limits_{k=1}^K \Bigl( 1+ \sum\limits_{S\in V_k} t_S \frac{p_S}{p(V_k)}\Bigl)^{-1},
\end{equation}
yielding $(X_S )_{S\in\mathcal{S}} \overset{d}{=} (Y_S )_{S\in\mathcal{S}}.$
\end{proposition}
The proof of Proposition~\ref{prop:equality_GF} is deferred to Subsection~\ref{subsec:proof_part2}.

To summarize, the proof of Theorem~\ref{th:general_results} consists of two major parts, as depicted at the top of the diagram in Figure~\ref{fig:diagram_results}. 
We first investigate the heavy-traffic limit of the PGF in Proposition~\ref{prop:pgf} (Proposition~\ref{th:limit_gf}). From this we can deduce that the (scaled) numbers of jobs of the various types converge to a random vector associated with a mixture distribution (Theorem~\ref{cor:mixture_dist}).
Second, we show that the Laplace transform of this mixture distribution under Assumption~\ref{cond:root} coincides with the Laplace transform of the random vector $(Y_S)_{S\in\mathcal{S}}$ as in~\eqref{eq:general_result_RV} (Proposition~\ref{prop:equality_GF}). The result in Theorem~\ref{th:general_results} then immediately follows, i.e., the (scaled) numbers of jobs of the various types converge to $(Y_S)_{S\in\mathcal{S}}$.

As visualized in the lower part of the diagram in Figure~\ref{fig:diagram_results}, we also provide an alternative proof of the heavy-traffic result in Theorem~\ref{cor:mixture_dist} in Appendix~\ref{sec:alternative proof}.
This alternative reasoning builds on the stochastic interpretation of the PGF in Proposition~\ref{prop:pgf}. In particular, we first derive a pre-limit characterization of the numbers of jobs of the various types in terms of weighted sums of geometrically distributed random variables whose parameters depend on the model parameters and the order in which the different job types occur. Subsequently we investigate the heavy-traffic behavior of each of these geometric random variables.

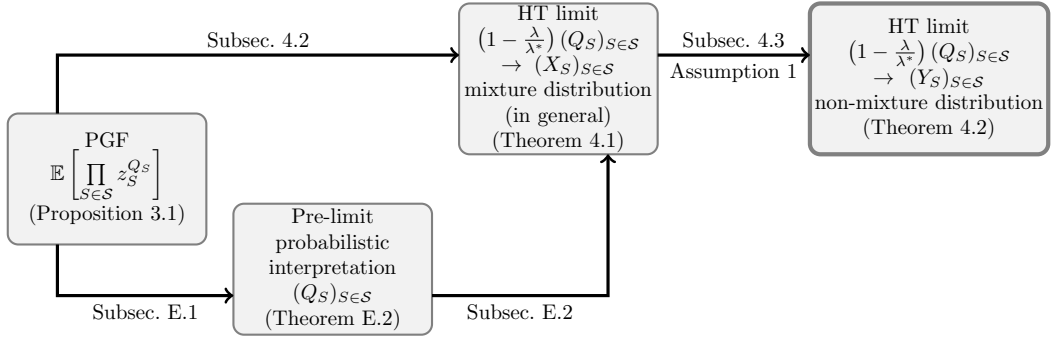
\begin{figure}
\centering
\addtolength{\leftskip} {-2cm}
\addtolength{\rightskip}{-2cm}
\begin{tikzpicture}[transform shape, scale = 0.8]
\def \k {5.3}
\def \l {7.5}
\def \m {2.5}
\def \rect {3.3}

\begin{scope}[shift = {(-\l,0)}]
\draw[rounded corners, draw = gray, thick, fill = gray!10] (0,0) rectangle (\rect,0.66*\rect) node[align = center, pos=.5,text width=\rect cm] {PGF\\ $\mathbb{E}\left[\prod\limits_{S\in\mathcal{S}} z_S^{Q_S}\right]$ (Proposition~\ref{prop:pgf})};
\end{scope}

\begin{scope}[shift = {(-0.5*\l,-0.6*\m)}]
\draw[rounded corners, draw = gray, thick, fill = gray!10] (0,0) rectangle (\rect,0.66*\rect) node[align = center, pos=.5,text width=\rect cm] {Pre-limit\\ probabilistic \\ interpretation  $(Q_S)_{S\in\mathcal{S}}$ \\(Theorem~\ref{th:probabilistic_interpretation})};
\end{scope}

\begin{scope}[shift = {(0,0.6*\m)}]
\draw[rounded corners, draw = gray, thick, fill = gray!10] (0,0) rectangle (\rect,0.76*\rect) node[align = center, pos=.5,text width=\rect cm] {HT limit\\ $\left( 1 - \frac{\lambda}{\lambda^*}\right)(Q_S)_{S\in\mathcal{S}}$\\$ \rightarrow (X_S)_{S\in\mathcal{S}}$\\
mixture distribution\\ (in general)\\ (Theorem~\ref{cor:mixture_dist})};
\end{scope}

\begin{scope}[shift = {(1.1*\k,0.6*\m)}]
\draw[rounded corners, draw = gray, ultra thick, fill = gray!10] (0,0) rectangle (1.2*\rect,0.76*\rect) node[align = center, pos=.5,text width=1.2*\rect cm] {HT limit\\ $\left( 1 - \frac{\lambda}{\lambda^*}\right)(Q_S)_{S\in\mathcal{S}}$\\$ \rightarrow (Y_S)_{S\in\mathcal{S}}$\\ non-mixture distribution\\ (Theorem~\ref{th:general_results})};
\end{scope}


\draw[->, very thick] (-\l+0.25*\rect,0) -- (-\l+0.25*\rect,0.5*\rect-\m) -- (-\l+0.25*\rect,0.5*\rect-\m) -- (-0.5*\l,0.5*\rect-\m) node [below, align = center,pos=.5,sloped]{Subsec.~\ref{sec:interpretation}} ;

\draw[->, very thick] (-0.5*\l+\rect,0.5*\rect-\m) -- (0.75*\rect,0.5*\rect-\m)
 node [below, align = center,pos=.5,sloped]{Subsec.~\ref{subsec:alternative_proof_HT}}
  -- (0.75*\rect,0.6*\m) ;


\draw[->, very thick] (-\l+0.25*\rect,0.66*\rect)  -- (-\l+0.25*\rect,0.5*\rect+0.6*\m) -- (0,0.5*\rect+0.6*\m) node [above, align = center,pos=.5,sloped]{Subsec.~\ref{subsec:proof_part1}};

\draw[->, very thick] (\rect,0.5*\rect+0.6*\m) -- (1.1*\k,0.5*\rect+0.6*\m) node [above, align = center,pos=.5,sloped]{Subsec.~\ref{subsec:proof_part2}} node [below, align = center,pos=.5,sloped]{Assumption~\ref{cond:root}};
\end{tikzpicture}
\caption{Schematic representation of the derived heavy-traffic (HT) results starting from the PGF.}\label{fig:diagram_results}
\end{figure}

\subsection{Discussion}\label{subsec:discussion}
In this subsection we provide further insight into the result stated in Theorem~\ref{th:general_results}.

Depending on the compatibility constraints and the model parameters, $K$ can take any value between 1 and $|\mathcal{S}|$. We now present two examples of these extremes.

For instance, when the strong CRP condition (Condition~\ref{cond:crp}) is satisfied, the associated DAG consists of a single node (i.e., $K=1$) which includes all job types. In this special case, the result in Theorem~\ref{th:general_results} simplifies to
\begin{equation}
\Bigl(1-\frac{\lambda}{\mu}\Bigl) \left( Q_S\right)_{S\in\mathcal{S}}
\overset{d}{\rightarrow} U\left(p_S\right)_{S\in\mathcal{S}},
\end{equation}
as $\lambda\uparrow\mu$, with $U$ a unit-mean exponential random variable. Hence, the system exhibits full state space collapse. Moreover, the limiting joint distribution coincides with that of a multi-class M/M/1 queue with arrival rate $N\lambda$, service rate $N\mu$ and class probabilities $(p_S)_{S\in\mathcal{S}}$.
This agrees with the results in~\cite[Theorem~1]{Cardinaels2022} which hold under both the weak and strong CRP condition.

In contrast, consider a setting with $\mathcal{S} = \{ \{n\} \colon n=1,\dots, N \}$, identical arrival rates $\lambda_{\{n\}} \equiv \lambda$ and identical processing speeds $\mu_n \equiv \mu$, i.e., each job type is compatible with only one server and each server has just one compatible job type. In this case, all subsets of job types are critical and the associated DAG consists of $K = |\mathcal{S}|= N$ isolated nodes such that $V_k = \{k\}$ for $k=1,\dots,K$. Hence, 
\begin{equation}
\Bigl(1-\frac{\lambda}{\mu}\Bigl) \left( Q_{\{n\}}\right)_{n=1}^N
\overset{d}{\rightarrow}\left(  U_n \right)_{n=1}^N,
\end{equation}
as $\lambda\uparrow\mu$, with $U_1,\dots,U_N$ independent and exponentially distributed random variables with unit mean.
This is consistent with the well-known result that the queue length of an M($\lambda)$/M($\mu$)/1 queue, when scaled with $1-\frac{\lambda}{\mu}$, converges to an exponentially distributed random variable with unit mean as $\lambda\uparrow\mu$. The above example can of course be seen as a collection of $N$ independent M($\lambda)$/M($\mu$)/1 queues, hence the joint distribution of the (scaled) numbers of jobs of the various types converges to the distribution of $N$ independent and exponential random variables.\\

As already alluded to above, the joint queue length vector exhibits a state space collapse onto a, possibly random, $K$-dimensional cone.
Even if only a partial state space collapse occurs, i.e., $K>1$, or even when $K=|\mathcal{S}|$, the limiting system will be more convenient to analyze. So, even when $K$ is large, and hence the dimension reduction is limited, it is more manageable to study a collection of independent exponential random variables than the pre-limit queue length distribution (possibly via the product-form expressions).\\

As observed in the introduction, explicit results for steady-state queue length distributions in non-CRP scenarios as presented in Theorem~\ref{th:general_results} have remained extremely scarce so far. To the best of our knowledge, no distributional results have been established in general settings, and we are only aware of some promising and revealing advances for special model instances. In particular, Maguluri \& Jhunjhunwala~\cite{Jhunjhunwala2022,Jhunjhunwala2023} extend the transform method to non-CRP scenarios to derive an implicit characterization of the limiting queue length distribution in terms of a certain functional equation for the class of input-queued switches operating in a time-slotted fashion under a MaxWeight scheduling algorithm.  In the special case of the N-model the functional equation can be solved under additional symmetry assumptions (equal service rates, and hence equal arrival rates in a non-CRP scenario) to obtain a result of the form
\begin{equation}
\Bigl(1-\frac{\lambda}{\mu} \Bigl)\left( Q_{\{1,2\}}, Q_{\{2\}} \right) \overset{d}{\rightarrow}\Bigl( \frac{1}{2}U_1,  \frac{1}{2}U_1 + U_2 \Bigl),
\end{equation}
as $\lambda \uparrow \mu$, with $U_1$ and $U_2$ independent unit-mean exponential random variables.

While the proof techniques and specific model attributes are quite different from our framework, it is striking to observe the close resemblance in the limiting queue length distribution in this particular case.  Since the above results for the MaxWeight algorithm do not extend to heterogeneous settings or more general compatibility graphs, it is not immediately clear though to what extent the similarity is due to the symmetry assumptions or might possibly apply more broadly.\\


Finally, notice that we did not assume the compatibility graph to be connected to obtain the results above. In case of a disconnected graph it might however be notationally easier to apply Theorem~\ref{th:general_results} (or Theorem~\ref{cor:mixture_dist}) to each connected component separately. 

\subsection{Proofs of Proposition~\ref{th:limit_gf}, Theorem~\ref{cor:mixture_dist} and Corollary~\ref{cor:total_num_jobs}}
\label{subsec:proof_part1}
In the remainder of this section it is assumed that all job types belong to (at least) one of the critical subsets and hence the whole system experiences criticality, so that $\lambda^* = \mu$. This assumption mainly serves to ease the notation and is non-essential; pointers to proofs and notation that need to be adapted otherwise can be found in Appendix~\ref{app:assumption_lam_star_mu}.

The proof of Proposition~\ref{th:limit_gf}, and the proofs of some of the remaining results, will rely on the following intermediate result.
\begin{lemma}\label{lem:intermediate_result}
Let $\boldsymbol{T}\in\mathcal{N}_k$ for some $k=0,1,\dots,K$, $\boldsymbol{t} \coloneqq (t_S)_{S\in\mathcal{S}} \ge 0$ and $\boldsymbol{z} \coloneqq (z_S)_{S\in\mathcal{S}}$ with $z_S = \exp \bigl( - \bigl( 1-\frac{\lambda}{\lambda^*} \bigl) t_S \bigl)$ for all $S\in\mathcal{S}$, and define 
\begin{equation}\label{eq:def_h_T_z}
h(\boldsymbol{T},\boldsymbol{z}) \coloneqq 
\prod\limits_{j=1}^{|\boldsymbol{T}|}\frac{N\lambda p_{T_j}z_{T_j}}{\mu(\boldsymbol{T},j)} 
\Bigl( 1- \frac{N\lambda \sum_{i=1}^j p_{T_i}z_{T_i}}{\mu(\boldsymbol{T},j)}
\Bigl)^{-1}.
\end{equation}
Then
\begin{equation}
\lim\limits_{\lambda\uparrow\lambda^*}  h(\boldsymbol{T},\boldsymbol{z}) \times \Bigl( 1-\frac{\lambda}{\lambda^*} \Bigl) ^k = h^*(\boldsymbol{T},\boldsymbol{t}),
\end{equation}
with
\begin{equation}
h^*(\boldsymbol{T},\boldsymbol{t}) \coloneqq \beta (\boldsymbol{T})  \times \prod\limits_{j\in\mathrm{CR}(\boldsymbol{T})} \Bigl( 1+ \frac{\sum_{i=1}^j p_{T_i}t_{T_i}}{p(\boldsymbol{T},j)}
\Bigl)^{-1} \in (0,\infty),
\end{equation}
and $\beta(\boldsymbol{T})$ as in~\eqref{eq:beta}.
\end{lemma}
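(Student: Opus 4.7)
The plan is to introduce the shorthand $\epsilon \coloneqq 1 - \lambda/\lambda^*$ (so $\lambda = \lambda^*(1 - \epsilon)$ and $z_S = \mathrm{e}^{-\epsilon t_S} = 1 - \epsilon t_S + O(\epsilon^2)$ as $\epsilon \downarrow 0$), and then analyze each of the $|\boldsymbol{T}|$ factors in the product defining $h(\boldsymbol{T},\boldsymbol{z})$ separately, splitting the indices $j \in \{1,\dots,|\boldsymbol{T}|\}$ according to whether $j \in \mathrm{CR}(\boldsymbol{T})$ or not. The prefactors $\frac{N\lambda p_{T_j} z_{T_j}}{\mu(\boldsymbol{T},j)}$ all converge individually to $\frac{N\lambda^* p_{T_j}}{\mu(\boldsymbol{T},j)}$, which together produce the first product appearing in $\beta(\boldsymbol{T})$ as given in \eqref{eq:beta}.

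For the reciprocal factors $\bigl(1 - \frac{N\lambda \sum_{i=1}^j p_{T_i} z_{T_i}}{\mu(\boldsymbol{T},j)}\bigr)^{-1}$, the two regimes must be handled differently. When $j \notin \mathrm{CR}(\boldsymbol{T})$, the subset $T_1 \cup \dots \cup T_j$ is strictly non-critical, hence $N\lambda^* p(\boldsymbol{T},j) < \mu(\boldsymbol{T},j)$ by Definitions~\ref{def:critical_arrival_rate} and~\ref{def:critical_subset}, and continuity immediately gives convergence to the finite positive limit $\bigl(1 - \frac{N\lambda^* p(\boldsymbol{T},j)}{\mu(\boldsymbol{T},j)}\bigr)^{-1}$, matching the second product in $\beta(\boldsymbol{T})$.

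The crux lies in the $k = |\mathrm{CR}(\boldsymbol{T})|$ indices $j \in \mathrm{CR}(\boldsymbol{T})$, for which $N\lambda^* p(\boldsymbol{T},j) = \mu(\boldsymbol{T},j)$, so the denominator vanishes as $\epsilon \downarrow 0$ and the factor blows up. Substituting $\lambda = \lambda^*(1-\epsilon)$ and $z_{T_i} = 1 - \epsilon t_{T_i} + O(\epsilon^2)$ yields, to first order in $\epsilon$,
\begin{equation}
1 - \frac{N\lambda \sum_{i=1}^j p_{T_i} z_{T_i}}{\mu(\boldsymbol{T},j)} = \epsilon\left(1 + \frac{\sum_{i=1}^j p_{T_i} t_{T_i}}{p(\boldsymbol{T},j)}\right) + O(\epsilon^2),
\end{equation}
where the zeroth-order term cancels exactly thanks to the critical identity $N\lambda^* p(\boldsymbol{T},j) = \mu(\boldsymbol{T},j)$. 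Each of the $k$ critical factors therefore behaves like $\epsilon^{-1}\bigl(1 + \frac{\sum_{i=1}^j p_{T_i} t_{T_i}}{p(\boldsymbol{T},j)}\bigr)^{-1}(1 + O(\epsilon))$, producing a total singular factor of $\epsilon^{-k}$, which is precisely absorbed by the multiplier $(1 - \lambda/\lambda^*)^k = \epsilon^k$.

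Combining the three contributions and taking the $\epsilon \downarrow 0$ limit reproduces $\beta(\boldsymbol{T})$ times the product over $j \in \mathrm{CR}(\boldsymbol{T})$ in the statement; positivity of $t_S$ and $p_S$ ensures the limit lies in $(0,\infty)$. I anticipate that the only delicate point is to verify that the first-order cancellation is the dominant behavior — i.e., that the remainder $O(\epsilon^2)$ in the critical factors does not accumulate across the $k$ factors to spoil the limit. This is handled by writing each critical factor as $\epsilon^{-1}[c_j + O(\epsilon)]^{-1}$ with $c_j > 0$, then multiplying the $k$ such factors and noting that the error is uniformly $O(\epsilon)$ relative to the leading term, so it vanishes in the limit after multiplication by $\epsilon^k$.
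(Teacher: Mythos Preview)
Your proposal is correct and follows essentially the same approach as the paper: split the factors of $h(\boldsymbol{T},\boldsymbol{z})$ into critical ($j\in\mathrm{CR}(\boldsymbol{T})$) and non-critical indices, handle the latter by continuity, and show each critical factor behaves like $\epsilon^{-1}\bigl(1+\sum_{i\le j} p_{T_i}t_{T_i}/p(\boldsymbol{T},j)\bigr)^{-1}$ so that the singularity is absorbed by $\epsilon^k$. The only cosmetic difference is that the paper packages the prefactors together with the non-critical reciprocals into a single piece $h^1$ (whose limit is $\beta(\boldsymbol{T})$) and invokes l'H\^opital's rule for the critical piece $h^2$, whereas you carry out the equivalent first-order Taylor expansion explicitly; your extra remark about the $O(\epsilon)$ error not accumulating across the $k$ factors is a welcome bit of care that the paper leaves implicit.
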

Notice that $h(\boldsymbol{T},\boldsymbol{z})$ is precisely the term in $f(\boldsymbol{z})$ in~\eqref{eq:f} for a given ordered vector of job types~$\boldsymbol{T}$ and that $\mathbb{P}^*(\boldsymbol{T}) = \beta(\boldsymbol{T})/\beta(\mathcal{N}_K)$ for any $\boldsymbol{T}\in\mathcal{N}_K$ with $\mathbb{P}^*(\boldsymbol{T})$ as in~\eqref{eq:P_star_T}.

\begin{proof}[Proof of Lemma~\ref{lem:intermediate_result}]
First, we note that we can write $h(\boldsymbol{T},\boldsymbol{z})$ as the product of 
\begin{equation}
\begin{array}{rcl}
h^1(\boldsymbol{T},\boldsymbol{z}) &\coloneqq &
\prod\limits_{j=1}^{|\boldsymbol{T}|}\frac{N\lambda p_{T_j}z_{T_j}}{\mu(\boldsymbol{T},j)}
\prod\limits_{j\notin \mathrm{CR}(\boldsymbol{T})}
\Bigl( 1- \frac{N\lambda \sum_{i=1}^j p_{T_i}z_{T_i}}{\mu(\boldsymbol{T},j)}
\Bigl)^{-1}\\
h^2(\boldsymbol{T},\boldsymbol{z}) &\coloneqq &
\prod\limits_{j\in \mathrm{CR}(\boldsymbol{T})}
\Bigl( 1- \frac{N\lambda \sum_{i=1}^j p_{T_i}z_{T_i}}{\mu(\boldsymbol{T},j)}
\Bigl)^{-1}.\\
\end{array}
\end{equation}
Now, from Definition~\ref{def:crit_vectors} we know that \emph{none} of the factors of $h^1$ will diverge when $\lambda\uparrow\lambda^*$. In fact, $\lim_{\lambda\uparrow\lambda^*}  h^1(\boldsymbol{T},\boldsymbol{z}) = \beta(\boldsymbol{T})$, with $\beta(\boldsymbol{T})$ as in~\eqref{eq:beta}.
Next, from Definition~\ref{def:crit_vectors} we know that \emph{each} of the $k$ factors of $h^2$ will diverge when $\lambda\uparrow\lambda^*$. However, after applying l'H\^opital's rule, we observe that 
\begin{equation}\label{eq:intermed_res_hopital}
\begin{array}{rcl}
\lim\limits_{\lambda\uparrow\lambda^*}  h^2(\boldsymbol{T},\boldsymbol{z})  \times \bigl( 1-\frac{\lambda}{\lambda^*} \bigl) ^k
&=& \lim\limits_{\lambda\uparrow\lambda^*} \prod\limits_{j\in \mathrm{CR}(\boldsymbol{T})}
\Bigl( 1- \frac{N\lambda \sum_{i=1}^j p_{T_i}z_{T_i}}{\mu(\boldsymbol{T},j)}
\Bigl)^{-1} \bigl( 1-\frac{\lambda}{\lambda^*} \bigl)\\
&=& \prod\limits_{j\in\mathrm{CR}(\boldsymbol{T})} \Bigl( 1+ \frac{\sum_{i=1}^j p_{T_i}t_{T_i}}{p(\boldsymbol{T},j)}
\Bigl)^{-1} \in (0,\infty).
\end{array}
\end{equation}
Combining the above two  observations concludes the proof.
\end{proof}

\begin{proof}[Proof of Proposition~\ref{th:limit_gf}]
We need to show that the PGF in~\eqref{eq:pgf} converges to \eqref{eq:GF} when $\lambda\uparrow\lambda^*$ and when the number of jobs of the various types is scaled by $(1-\frac{\lambda}{\lambda^*})$. 
First, we rewrite the function~$f$ in~\eqref{eq:f} by rearranging the terms according to the number of critical sets of job types they correspond to. In particular,
\begin{equation}\label{eq:rewrite_f}
f(\boldsymbol{z}) = \sum\limits_{k=0}^K  f_k(\boldsymbol{z})
\quad \text{with} \quad
f_k(\boldsymbol{z}) = \sum\limits_{\boldsymbol{T}\in\mathcal{N}_k} 
h(\boldsymbol{T},\boldsymbol{z})
\end{equation}
and $h(\boldsymbol{T},\boldsymbol{z})$ as in~\eqref{eq:def_h_T_z}.
Note that we may write 
\begin{equation}
 \mathbb{E}\Bigr[\prod\limits_{S\in\mathcal{S}}z_S^{Q_S} \Bigr]
= \dfrac{\sum\limits_{k=0}^K  f_k(\boldsymbol{z})}{\sum\limits_{k=0}^K  f_k(\boldsymbol{1})}\\
= \dfrac{f_K(\boldsymbol{z})}{f_K(\boldsymbol{1})} \dfrac{1+\sum\limits_{k=0}^{K-1}\dfrac{f_k(\boldsymbol{z})}{f_K(\boldsymbol{z})}}{1+\sum\limits_{k=0}^{K-1}\dfrac{f_k(\boldsymbol{1})}{f_K(\boldsymbol{1})}}.
\end{equation}
We will first show that $f_k(\boldsymbol{z})/ f_K(\boldsymbol{z})$ converges to 0 as $\lambda\uparrow\lambda^*$ for all $k=0,\dots,K-1$ and $t_S\ge 0$ for all $S\in\mathcal{S}$. Then we prove that $f_K(\boldsymbol{z})/ f_K(\boldsymbol{1})$ indeed tends to~\eqref{eq:GF}. 
Recalling that $z_S \coloneqq \exp\left(-(1-\lambda/\lambda^*)t_S\right)$ for all $S\in\mathcal{S}$ and $t_S\ge 0$, 
this concludes the proof.

\textbf{Part 1:} \textit{Show that for all $k=0,\dots,K-1$ and $z_S \coloneqq \exp\left(-(1-\lambda/\lambda^*)t_S\right)$ with $t_S \ge 0$,} 
$$\lim_{\lambda\uparrow\lambda^*} f_k(\boldsymbol{z})/f_K(\boldsymbol{z}) = 0.$$
Due to Lemma~\ref{lem:intermediate_result}, we know that both $(1-\frac{\lambda}{\lambda^*})^k f_k(\boldsymbol{z})$ and $(1-\frac{\lambda}{\lambda^*})^K f_K(\boldsymbol{z})$ converge to constants as $\lambda\uparrow\lambda^*$. Hence, 
\begin{equation}
\Bigl(1-\frac{\lambda}{\lambda^*}\Bigl)^k f_K(\boldsymbol{z}) = \Bigl(1-\frac{\lambda}{\lambda^*}\Bigl)^{k-K} \biggl( \Bigl(1-\frac{\lambda}{\lambda^*}\Bigl)^K f_K(\boldsymbol{z})\biggl)
\end{equation}
diverges as $k<K$. We conclude that 
\begin{equation}\label{eq:limit_f_k}
\lim\limits_{\lambda\uparrow\lambda^*} \frac{f_k(\boldsymbol{z})}{f_K(\boldsymbol{z})} 
= \lim\limits_{\lambda\uparrow\lambda^*} \frac{\bigl(1-\frac{\lambda}{\lambda^*}\bigl)^k f_k(\boldsymbol{z})}{\bigl(1-\frac{\lambda}{\lambda^*}\bigl)^K f_K(\boldsymbol{z})} 
\times \lim\limits_{\lambda\uparrow\lambda^*} \Bigl(1-\frac{\lambda}{\lambda^*}\Bigl)^{K-k} 
=0.
\end{equation} 

\textbf{Part 2:} \textit{Show that for all $z_S \coloneqq \exp\left(-(1-\lambda/\lambda^*)t_S\right)$ with $t_S \ge 0$,}
\begin{equation}\label{eq:part2}
\lim\limits_{\lambda\uparrow\lambda^*} \frac{f_K(\boldsymbol{z})}{f_K(\boldsymbol{1})} = 
\sum\limits_{\boldsymbol{T}\in\mathcal{N}_K} \frac{\beta(\boldsymbol{T})}{ {\beta}(\mathcal{N}_K)} \prod\limits_{i\in \text{CR}(\boldsymbol{T})}\biggl(1+ \sum_{j=1}^i t_{T_j}\frac{p_{T_j}}{p(\boldsymbol{T},i)}\biggl)^{-1}
.
\end{equation} 
We will again focus on the function $(1-\frac{\lambda}{\lambda^*})^K f_K(\boldsymbol{z})$.
From Lemma~\ref{lem:intermediate_result}, we know that
\begin{equation}\label{eq:theorem_part2}
\lim\limits_{\lambda\uparrow\lambda^*} 
\Bigl(1-\frac{\lambda}{\lambda^*}\Bigl)^K f_K(\boldsymbol{z})
= \sum\limits_{\boldsymbol{T}\in\mathcal{N}_{K}}
\beta(\boldsymbol{T}) 
\prod\limits_{i\in\text{CR}(\boldsymbol{T})} 
 \Bigl(
1+\frac{N\lambda^*}{\mu(\boldsymbol{T},i)}\sum\limits_{j=1}^i p_{T_j}t_{T_j}
\Bigl)^{-1}.
\end{equation}
Note that the latter product evaluates as 1 if $\boldsymbol{z} = \boldsymbol{1}$, or alternatively if $\boldsymbol{t} = \boldsymbol{0}$, such that 
\begin{equation}
\lim\limits_{\lambda\uparrow\lambda^*} 
\Bigl(1-\frac{\lambda}{\lambda^*}\Bigl)^K f_K(\boldsymbol{1})
= {\beta}(\mathcal{N}_K).
\end{equation} 

This concludes the proof of Proposition~\ref{th:limit_gf}.
\end{proof}

\begin{proof}[Proof of Theorem~\ref{cor:mixture_dist}]
Considering~\eqref{eq:GF}, we notice that there is a weight $\mathbb{P}^*(\boldsymbol{T})$ associated with each vector $\boldsymbol{T}\in\mathcal{N}_K$.
Now, focusing on a fixed $\boldsymbol{T}\in\mathcal{N}_K$, we observe that
\begin{equation}
\prod\limits_{i\in\mathcal{CR}(\boldsymbol{T})}\Bigl(1+ \sum_{j=1}^i t_{T_j}\frac{p_{T_j}}{p(\boldsymbol{T},i)}\Bigl)^{-1} = \prod\limits_{k=1}^K \left( 1+ x_k(\boldsymbol{T}) \right)^{-1} = \prod\limits_{k=1}^K \mathbb{E}\left[ \mathrm{e}^{-U_k x_k(\boldsymbol{T}) } \right],
\end{equation} 
with $x_k(\boldsymbol{T}) \coloneqq  \sum_{j=1}^{i_k}t_{T_j}p_{T_j}/p(\boldsymbol{T},i_k)$. Moreover, we note that
\begin{equation}
\sum\limits_{k=1}^K U_k x_k(\boldsymbol{T}) = \sum\limits_{j=1}^{|\boldsymbol{T}|} t_{T_j} \sum\limits_{\substack{k=1,\dots,K \\ i_k \ge j}} U_k \frac{p_{T_j}}{p(\boldsymbol{T},i_k)} = \sum\limits_{S\in\mathcal{S}} t_S \sum\limits_{\substack{k=1,\dots,K \\ i_k \ge j_S(\boldsymbol{T})}} U_k \frac{p_{S}}{p(\boldsymbol{T},i_k)},
\end{equation}
where we altered the summation order in the last equality from $T\in\boldsymbol{T}$ to $S\in\mathcal{S}$ by keeping track of the position of job type $S$ in the $K$-critical vector $\boldsymbol{T}$ via $j_S(\boldsymbol{T})$. Note that the above expression is still valid if $S \notin \boldsymbol{T}$, in that case we can assume that $j_S(\boldsymbol{T}) = \infty$ and hence the corresponding inner sum evaluates to 0.
The convergence result then follows from L\'evy's Continuity Theorem~\cite{Jacod2000}.
\end{proof}

\begin{remark}
In case $K=1$, the statement of Theorem~\ref{cor:mixture_dist} coincides with the results in~\cite[Theorem~2]{Cardinaels2022}. The present paper not only allows for a more general setting (i.e., $K \ge 1$), but it also significantly reduces the complexity of the proof by, first, enumerating the vectors of job types according to the number of critical subsets ($\boldsymbol{T}\in \mathcal{N}_k$) instead of the length ($\boldsymbol{T}\in \mathcal{S}_k$), and second, by multiplying the numerators and denominator with $(1-\lambda/\lambda^*)^K$ (as in~\eqref{eq:limit_f_k}) instead of \emph{extracting} the possibly diverging components of the limiting PGF.
\end{remark}

\begin{proof}[Proof of Corollary~\ref{cor:total_num_jobs}]
Substitute $t_S = t$ for all $S\in\mathcal{S}$ in the expression obtained in Proposition~\ref{th:limit_gf}, and observe that $|\text{CR}(\boldsymbol{T})| = K$ for all $\boldsymbol{T}\in\mathcal{N}_K$. The simplification to $(1+t)^{-K}$ follows after straightforward manipulations.

We observe that this limiting PGF of the (scaled) total number of jobs coincides with that of a sum of $K$ independent exponential random variables with unit mean, or in other words the Laplace transform of a random variable with an Erlang distribution. Hence, L\'evy's Continuity Theorem~\cite{Jacod2000} implies that the non-negative random variable $(1-\frac{\lambda}{\lambda^*})Q$ converges in distribution to a random variable with an Erlang distribution and parameters $1$ and $K$.
\end{proof}

\subsection{Proof of Proposition~\ref{prop:equality_GF}}
\label{subsec:proof_part2}

In order to prove Proposition~\ref{prop:equality_GF} we will use the fact that some of the components of the mixture distribution of $(X_S)_{S\in\mathcal{S}}$ are the same and hence the corresponding mixture weights can be aggregated. This, together with further simplification, will result in an equivalent representation of the Laplace transform as in~\eqref{eq:equality_GF}.

Indeed, considering Example~\ref{example:mixture_dist} it can be seen that the $K$-ordered vectors 
$$\boldsymbol{T} = \left[ \{1\},\{3\},\{3,4\},\{1,2,3\} \right]\text{~and~}\boldsymbol{T'} = \left[ \{1\},\{3,4\},\{3\},\{1,2,3\} \right]$$ 
both induce the same mixture component of the heavy-traffic limit.
This is due to the fact that $\text{CR}(\boldsymbol{T}) = \text{CR}(\boldsymbol{T}') = \{1,3,4\}$ and the only difference between $\boldsymbol{T}$ and $\boldsymbol{T}'$ is a permutation of the job types that belong to the same CRP component, i.e., $\mathcal{C}_2 = \{\{3\},\{3,4\}\}$. A similar observation can be made for the remaining two $K$-ordered vectors in $\mathcal{N}_K$.

We first elaborate on the various mixture components and how the corresponding mixture weights can be aggregated.

\begin{definition}[$\sigma$-ordered vectors]\label{def:sigma_critcal_vectors}
Let $\sigma\in\Sigma_K$ be a topological ordering, then we define a set of ordered vectors as follows, $\boldsymbol{T}^{\sigma} \coloneqq \left\{ \boldsymbol{T} = \left[ \mathrm{perm}(\mathcal{C}_{\sigma(1)}),\dots, \mathrm{perm}(\mathcal{C}_{\sigma(K)}) \right] \right\}$,
where $\mathrm{perm}(\mathcal{C}_{k})$ denotes any permutation of the job types in the critical component $\mathbb{C}_k$. We refer to the vectors in~$\boldsymbol{T}^{\sigma}$ as $\sigma$-ordered vectors.
\end{definition}

\begin{example}
\label{exam:sigma_ordered_vectors}
In Example~\ref{ex:example_main_result} there are two possible topological orderings of the CRP components, i.e., $\Sigma_K = \left\{ (1,2,3),(2,1,3) \right\}$. With the CRP components defined as $\mathbb{C}_1 = (\{1\},\{1\})$, $\mathbb{C}_2 = (\{\{3\},\{3,4\}\},\{3,4\})$ and $\mathbb{C}_3 = (\{1,2,3\},\{2\})$, this results in the following two sets of $\sigma$-ordered vectors,
\begin{equation}
\begin{array}{rcl}
\boldsymbol{T}^{(1,2,3)} & = & \left\{ [\{1\},\{3\},\{3,4\},\{1,2,3\}], [\{1\},\{3,4\},\{3\},\{1,2,3\}]\right\}, \\
\boldsymbol{T}^{(2,1,3)} & = & \left\{ [\{3\},\{3,4\},\{1\},\{1,2,3\}], [\{3,4\},\{3\},\{1\},\{1,2,3\}]\right\}.
\end{array}
\end{equation}
\end{example}

Next, we argue that the above construction for all $\sigma\in\Sigma_K$ will yield a partitioning of the set of $K$-critical vectors, $\mathcal{N}_K$.

\begin{lemma} \label{lem:N_K_versus_Sigma}
It holds that
 $\mathcal{N}_K = \bigcup_{\sigma\in\Sigma_K} \boldsymbol{T}^{\sigma}$, 
with $\mathcal{N}_K$ as in Definition~\ref{def:crit_vectors} and $\boldsymbol{T}^{\sigma}$ as in Definition~\ref{def:sigma_critcal_vectors}.
\end{lemma}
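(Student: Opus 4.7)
The plan is to prove the set equality by establishing both inclusions, leveraging Theorem~\ref{th:construction}, which gives a complete enumeration of the critical subsets via the family $\{T_{\sigma,k}:\sigma\in\Sigma_K,\ k=1,\dots,K\}$, together with Remark~\ref{rem:K}, which guarantees that for any fixed $\sigma$ the chain $T_{\sigma,1}\subsetneq\dots\subsetneq T_{\sigma,K}$ has length equal to the depth $K$.

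For the inclusion $(\supseteq)$, I fix $\sigma\in\Sigma_K$ and $\boldsymbol{T}\in\boldsymbol{T}^\sigma$. Since the CRP components partition $\mathcal{S}$, the vector $\boldsymbol{T}$ has length $|\mathcal{S}|$. The set of its first $\sum_{j=1}^l|\mathcal{C}_{\sigma(j)}|$ entries coincides with $T_{\sigma,l}$, which is critical by Theorem~\ref{th:construction}; this yields at least $K$ critical prefixes. For any prefix whose length falls strictly between these cut-off points, the corresponding set of entries strictly contains $T_{\sigma,l-1}$ and is strictly contained in $T_{\sigma,l}$ for some $l$. Because every critical subset is, by Theorem~\ref{th:construction}, a union of complete CRP components, whereas such an intermediate set contains only a proper non-empty subfamily of $\mathcal{C}_{\sigma(l)}$, it cannot be critical. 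Hence $\boldsymbol{T}$ has \emph{exactly} $K$ critical prefixes and $\boldsymbol{T}\in\mathcal{N}_K$.

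For $(\subseteq)$, take $\boldsymbol{T}\in\mathcal{N}_K$ and let $i_1<\dots<i_K$ denote the positions of its critical prefixes, with $\mathcal{A}_l:=\{T_1,\dots,T_{i_l}\}\in\mathcal{CR}(\mathcal{S})$. Then $\emptyset=\mathcal{A}_0\subsetneq\mathcal{A}_1\subsetneq\dots\subsetneq\mathcal{A}_K$ is a chain of critical subsets of length $K$, hence maximal by Definition~\ref{def:depthK}. The key step is to show that each set-theoretic difference $\mathcal{A}_l\setminus\mathcal{A}_{l-1}$ coincides with a single CRP component $\mathcal{C}_{\sigma(l)}$: both $\mathcal{A}_l$ and $\mathcal{A}_{l-1}$ are unions of complete CRP components by Theorem~\ref{th:construction}, so their difference is as well, and if it consisted of two or more CRP components one could insert an intermediate critical subset, producing a critical chain of length $K+1$ and contradicting the maximality of the depth. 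The resulting map $l\mapsto\sigma(l)$ is therefore a permutation of $\{1,\dots,K\}$ with $\mathcal{A}_l=T_{\sigma,l}$, and by Theorem~\ref{th:construction} any such $\sigma$ necessarily lies in $\Sigma_K$. Finally, the block $T_{i_{l-1}+1},\dots,T_{i_l}$ consists of $|\mathcal{C}_{\sigma(l)}|$ distinct job types whose union is $\mathcal{C}_{\sigma(l)}$, hence it is a permutation of that component, showing $\boldsymbol{T}\in\boldsymbol{T}^\sigma$. No entries of $\boldsymbol{T}$ lie beyond position $i_K$, because $\mathcal{A}_K=\bigcup_j\mathcal{C}_{\sigma(j)}$ already exhausts the job types.

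I expect the main obstacle to be the middle step of $(\subseteq)$: formally ruling out that a maximal chain of critical subsets contains an atomic step spanning more than one CRP component, and then confirming that the permutation $\sigma$ thus produced really is a topological ordering. Conceptually, this amounts to identifying $\mathcal{CR}(\mathcal{S})$, ordered by inclusion, with the distributive lattice of downward-closed subsets of the DAG of CRP components, after which the atomic-step and topological-order claims are standard. I would carry this out by mirroring the structural arguments behind Theorem~\ref{th:construction} and the ideas from \cite[Lemmas~3,~4]{Afeche2021}: any chain-insertion would violate the depth characterization of $K$, and a non-topological assignment $\sigma$ would produce a partial union $T_{\sigma,l}$ that could not be written in the form $T_{\sigma',k}$ for any $\sigma'\in\Sigma_K$, contradicting Theorem~\ref{th:construction}.
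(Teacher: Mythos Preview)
Your proof is correct and follows the same two-inclusion structure as the paper's argument, but the justifications you use at the key steps differ from the paper's, and the paper's choices are worth knowing because they sidestep precisely the ``main obstacle'' you anticipate.

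For $(\supseteq)$, you argue that intermediate prefixes cannot be critical because they fail to be unions of complete CRP components. The paper instead simply observes that, by Definition~\ref{def:depthK}, \emph{no} ordered vector can induce more than $K$ nested critical subsets, so once you have exhibited $K$ of them you are done. This avoids appealing to the structure of $\mathcal{CR}(\mathcal{S})$ altogether.

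For $(\subseteq)$, you show that each step $\mathcal{A}_l\setminus\mathcal{A}_{l-1}$ is a single CRP component by a maximality/insertion argument, and then infer $\sigma\in\Sigma_K$ from Theorem~\ref{th:construction} via the lower-set characterization. The paper takes a more direct route on both points. First, since each $\mathcal{A}_l$ is a union of CRP components and there are exactly $K$ components and $K$ strict inclusions in the chain, pigeonhole forces each step to add precisely one component; no insertion argument is needed. Second, to confirm that the resulting permutation is a topological order, the paper argues by contradiction using the capacity identity $N\lambda^* p(\mathcal{C}_k)=\mu(\mathcal{Z}_k)$ from \cite[Lemma~3(i)]{Afeche2021}: if some $\mathbb{C}_i$ precedes a component $\mathbb{C}_j$ to which it has a DAG arc, one obtains $\mu(\mathcal{T}_i)<\mu(\mathcal{T}_i)$, an immediate contradiction. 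This replaces your planned lattice-theoretic detour by a one-line capacity computation.
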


\begin{proof}
Consider $\boldsymbol{T} \in \boldsymbol{T}^{\sigma}$ for some $\sigma\in\Sigma_K$. We have to show that $\boldsymbol{T}$ induces $K$ critical subsets when combining the respective job types one by one.
Due to Construction~\ref{construction}, we know that $T_{\sigma,1} \subsetneq T_{\sigma,2}\subsetneq \dots \subsetneq T_{\sigma,K}$, where 
 $T_{\sigma,k} = \bigcup_{i=1}^k \mathcal{C}_{\sigma(i)}$ 
are critical subsets of job types for all~$k$. Since, by definition of $K$, there cannot be more than $K$ induced critical subsets in any ordered vector in $\mathcal{N}$, we conclude that $\boldsymbol{T}\in\mathcal{N}_K$.

Now assume that $\boldsymbol{T} \in\mathcal{N}_K$ with $\mathrm{CR}(\boldsymbol{T}) = \{i_1,\dots,i_K\}$. Define $\mathcal{T}_k
\coloneqq \{T_1,\dots,T_{i_k} \}$ for $k=1,\dots,K$ and $\mathcal{T}_0 \coloneqq \emptyset$ to ease the notation. Since the critical components form a partitioning of the job types and all the critical subsets of job types $\mathcal{T}_1,\dots,\mathcal{T}_K$ can be written as a union of the critical components, we have that (after possibly relabeling the critical components) $\mathcal{T}_{k}\setminus\mathcal{T}_{k-1} = \mathcal{C}_k$ for $k=1,\dots,K$. 

We claim that $(\mathbb{C}_1,\dots,\mathbb{C}_K)$ is indeed a topological ordering and hence there exists a $\sigma\in\Sigma_K$ such that $\boldsymbol{T}\in\boldsymbol{T}^{\sigma}$. If the critical component $\mathbb{C}_i$ can forward some of its jobs to servers in critical component $\mathbb{C}_j$, then $\mathbb{C}_j$ must occur before $\mathbb{C}_i$ in the topological ordering. Assume by contradiction that $\mathbb{C}_i$ is positioned before $\mathbb{C}_j$ in the above-mentioned ordering. This implies that there is a job type~$S\in\mathcal{C}_i$ compatible with server $n\in\mathcal{Z}_j$. By construction, we have that 
$\mathcal{Z}_k \subseteq \cup_{T\in\mathcal{C}_k} T$, and from \cite[Lemma~3(i)]{Afeche2021} we know that $N\mu p\left(\mathcal{C}_k\right) = \mu\left(\mathcal{Z}_k\right)$ for all $k$, so that 
\begin{equation}
\mu(\mathcal{T}_i) = N\mu p(\mathcal{T}_i) = N\mu\sum\limits_{k=1}^ip(\mathcal{C}_k) = \sum\limits_{k=1}^i \mu(\mathcal{Z}_k) < \mu(\mathcal{T}_i),
\end{equation}
as server $n$ is not contained in $\mathcal{Z}_i$ (or $\mathcal{Z}_1,\dots,\mathcal{Z}_{i-1}$). 
This contradicts the criticality of the set $\mathcal{T}_i$. Thus, the above ordering is indeed a topological ordering. This concludes the proof.
\end{proof}

\begin{lemma}\label{lem:mixture_components}
The mixture components associated with the ordered vectors governed by the same topological ordering, i.e., all $\boldsymbol{T}\in\boldsymbol{T}^{\sigma}$ for some $\sigma\in\Sigma_K$, are the same.
\end{lemma}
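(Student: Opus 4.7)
The plan is to show that for any fixed $\sigma\in\Sigma_K$ and any two ordered vectors $\boldsymbol{T},\boldsymbol{T}'\in\boldsymbol{T}^\sigma$, each quantity that enters into the formula for the mixture component depends only on $\sigma$, not on the particular permutations of job types within each CRP component.

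First, I would identify $\mathrm{CR}(\boldsymbol{T})$ explicitly for $\boldsymbol{T}\in\boldsymbol{T}^\sigma$. By the definition of $\boldsymbol{T}^\sigma$, the first $|\mathcal{C}_{\sigma(1)}|+\cdots+|\mathcal{C}_{\sigma(k)}|$ entries of $\boldsymbol{T}$ are precisely the job types in $T_{\sigma,k}=\bigcup_{i=1}^k\mathcal{C}_{\sigma(i)}$. By Theorem~\ref{th:construction}, each such $T_{\sigma,k}$ is a critical subset. Moreover, Lemma~\ref{lem:N_K_versus_Sigma} already yields $\boldsymbol{T}\in\mathcal{N}_K$, so these are exactly the $K$ critical-subset indices induced by $\boldsymbol{T}$. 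Hence $i_k:=|\mathcal{C}_{\sigma(1)}|+\cdots+|\mathcal{C}_{\sigma(k)}|$ for $k=1,\dots,K$, and in particular $\mathrm{CR}(\boldsymbol{T})=\mathrm{CR}(\boldsymbol{T}')=\{i_1,\dots,i_K\}$ depends only on $\sigma$.

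Next, I would verify that the two ingredients of $P_{k,S}$, namely $p(\boldsymbol{T},i_k)$ and $\mathds{1}\{i_k\ge j_S(\boldsymbol{T})\}$, are also invariant within $\boldsymbol{T}^\sigma$. The first $i_k$ entries of both $\boldsymbol{T}$ and $\boldsymbol{T}'$ enumerate the same set $T_{\sigma,k}$, so
\begin{equation}
p(\boldsymbol{T},i_k)=\sum_{j=1}^{i_k}p_{T_j}=\sum_{S\in T_{\sigma,k}}p_S=p(T_{\sigma,k})=p(\boldsymbol{T}',i_k).
\end{equation}
Similarly, the condition $i_k\ge j_S(\boldsymbol{T})$ says exactly that $S$ appears among the first $i_k$ entries of $\boldsymbol{T}$, i.e.\ $S\in T_{\sigma,k}$; the same holds for $\boldsymbol{T}'$. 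Thus $\mathds{1}\{i_k\ge j_S(\boldsymbol{T})\}=\mathds{1}\{S\in T_{\sigma,k}\}=\mathds{1}\{i_k\ge j_{S}(\boldsymbol{T}')\}$ for every $S\in\mathcal{S}$ and every $k$.

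Combining the two observations, the coefficient
\begin{equation}
P_{k,S}=\frac{p_S}{p(\boldsymbol{T},i_k)}\,\mathds{1}\{i_k\ge j_S(\boldsymbol{T})\}=\frac{p_S}{p(T_{\sigma,k})}\,\mathds{1}\{S\in T_{\sigma,k}\}
\end{equation}
is identical for $\boldsymbol{T}$ and $\boldsymbol{T}'$, and therefore the random vector $\bigl(\sum_{k=1}^K P_{k,S}U_k\bigr)_{S\in\mathcal{S}}$ giving the mixture component is the same. The main (minor) obstacle is simply the bookkeeping in the first step: identifying that the indices $i_k$ really are the $K$ critical ones, which uses Theorem~\ref{th:construction} together with the fact that a vector in $\mathcal{N}_K$ can induce at most $K$ critical subsets; once this is in place the rest is immediate.
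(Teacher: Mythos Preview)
Your proposal is correct and follows essentially the same approach as the paper's own proof: both identify $\mathrm{CR}(\boldsymbol{T})=\{i_1,\dots,i_K\}$ with $i_k=\sum_{j=1}^k|\mathcal{C}_{\sigma(j)}|$, observe that $p(\boldsymbol{T},i_k)=\sum_{j=1}^k p(\mathcal{C}_{\sigma(j)})$, and note that the indicator condition on $j_S(\boldsymbol{T})$ reduces to membership of $S$ in $T_{\sigma,k}$, all of which depend only on $\sigma$. Your version is in fact slightly more explicit than the paper's in justifying why those indices exhaust $\mathrm{CR}(\boldsymbol{T})$, via Theorem~\ref{th:construction} and Lemma~\ref{lem:N_K_versus_Sigma}.
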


\begin{proof}
Note that for all $\boldsymbol{T}\in\boldsymbol{T}^{\sigma}$ we have $\mathrm{CR}(\boldsymbol{T}) = \left\{ |\mathcal{C}_{\sigma(1)}|, |\mathcal{C}_{\sigma(1)}|+|\mathcal{C}_{\sigma(2)}|, \dots, \sum_{k=1}^K |\mathcal{C}_{\sigma(k)}| \right\}$. To ease the notation, write $ \mathrm{CR}(\boldsymbol{T}) = \{i_1,\dots,i_K\}$ with $i_0 = 0 < i_1< \dots < i_K$.

Consider the result and notation of Theorem~\ref{cor:mixture_dist} and focus on any job type $S\in\mathcal{S}$. Then there exists some $k'=0,1,\dots,K$ such that $i_{k'-1} < j_S(\boldsymbol{T}) \le i_{k'}$ for all  $\boldsymbol{T}\in\boldsymbol{T}^{\sigma}$. Moreover, note that $p(\boldsymbol{T},i_k) = \sum_{j=1}^k p(\mathcal{C}_{\sigma(j)})$ for all $\boldsymbol{T}\in\boldsymbol{T}^{\sigma}$.  Thus,
\begin{equation}
X_S \mid \boldsymbol{T}
\overset{d}{=} \sum\limits_{\substack{k=1,\dots,K \\ i_k \ge j_{S}(\boldsymbol{T})}} U_k \frac{p_{S}}{p(\boldsymbol{T},i_k)}
=\sum\limits_{k=k'}^{K} U_k \frac{p_{S}}{p(\boldsymbol{T},i_k)}
=\sum\limits_{k=k'}^{K} U_k \frac{p_{S}}{\sum_{j=1}^k p(\mathcal{C}_{\sigma(j)})},
\end{equation}
which no longer depends on the actual ordering in $\boldsymbol{T}$ but only on $\sigma\in\Sigma_K$.  This concludes the proof.
\end{proof}

As a consequence of the above two lemmas, if the topological ordering is unique, i.e., $|\Sigma_K|=1$ and hence the associated DAG is a line graph, then the representation of $(X_S)_{S\in\mathcal{S}}$ in~\eqref{eq:X_s} can easily be rewritten to obtain the non-mixture distribution of $(Y_S)_{S\in\mathcal{S}}$ in~\eqref{eq:general_result_RV}.
In all other settings, this simplification is not so straightforward.


Next we investigate the mixture weights. We will use the following
identity in order to aggregate the mixture weights $\left(\mathbb{P}^*(\boldsymbol{T})\right)_{\boldsymbol{T}\in\mathcal{N}_K}$ in~\eqref{eq:P_star_T}
over all ordered vectors induced by the same topological ordering of the CRP components 

\begin{lemma}\label{lem:Guido_extension}
Let $c_1,\dots, c_K$ be $K$ positive constants and $\Sigma_K$ be the set of all topological orderings of a DAG that satisfies Assumption~\ref{cond:root}, then
\begin{equation}
\sum\limits_{\sigma \in\Sigma_K}
\frac{1}{c_{\sigma(1)}(c_{\sigma(1)}+c_{\sigma(2)}) \dots (c_{\sigma(1)}+\dots+c_{\sigma(K)})}
=
\prod\limits_{k=1}^K \frac{1}{ \sum_{j\in V_k} c_j}.
\end{equation}
\end{lemma}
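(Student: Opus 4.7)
The plan is to prove Lemma~\ref{lem:Guido_extension} via a probabilistic argument based on independent exponential random variables. Introduce $E_1,\dots,E_K$ independent with respective rates $c_1,\dots,c_K$. A standard calculation using the memoryless property shows that, for any permutation $\sigma$ of $\{1,\dots,K\}$,
\begin{equation}
\mathbb{P}\bigl(E_{\sigma(K)} < E_{\sigma(K-1)} < \cdots < E_{\sigma(1)}\bigr) = \prod_{k=1}^{K} \frac{c_{\sigma(k)}}{c_{\sigma(1)}+\cdots+c_{\sigma(k)}},
\end{equation}
so each summand on the left-hand side of the lemma equals this probability divided by $\prod_{j} c_j$. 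Since the orderings induced by distinct $\sigma$'s are mutually exclusive, summing over topological orderings $\sigma \in \Sigma_K$ yields
\begin{equation}
\frac{1}{\prod_{j} c_j}\,\mathbb{P}(\mathcal{A}), \qquad \mathcal{A} \coloneqq \bigl\{E_i < E_j \text{ for every directed arc } (\mathbb{C}_i,\mathbb{C}_j) \text{ of the DAG}\bigr\}.
\end{equation}

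Next, I would rewrite $\mathcal{A}$ as $\bigcap_{k=1}^{K}\{E_k = \min_{j\in V_k}E_j\}$, which follows by chasing the inequalities along directed paths together with the defining property of $V_k$ as the set of nodes reachable from $\mathbb{C}_k$. Invoking the forest structure of the CRP-component DAG --- a property underlying the disjointness arguments in the proof of Lemma~\ref{lem:N_K_versus_Sigma} and the max-flow analysis of~\cite{Afeche2021} --- the event $\mathcal{A}$ decomposes as an intersection of independent events over the disjoint rooted trees, so that $\mathbb{P}(\mathcal{A})$ factors tree by tree.

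Within a single rooted tree with root~$r$, I would then prove by induction on the number of nodes that
\begin{equation}
\mathbb{P}\Bigl(\bigcap_{k\in V_r}\bigl\{E_k = \min_{j\in V_k}E_j\bigr\}\Bigr) = \prod_{k\in V_r}\frac{c_k}{\sum_{j\in V_k}c_j}.
\end{equation}
The base case of a single node is immediate. For the inductive step, $\mathbb{P}(E_r = \min_{j\in V_r}E_j) = c_r/\sum_{j\in V_r}c_j$ by the exponential-minimum identity, and by memorylessness the shifted residuals $(E_j - E_r)_{j\in V_r\setminus\{r\}}$, conditional on this event, are again independent exponentials with their original rates. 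Since the subtrees rooted at the children of $r$ are supported on disjoint index sets that partition $V_r\setminus\{r\}$, the conditional events for different children are independent and the inductive hypothesis applies to each. Combining the tree-level probabilities and multiplying by $1/\prod_j c_j$ then reproduces the right-hand side of the lemma.

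I expect the main obstacle to be the second step, namely the rewriting of $\mathcal{A}$ as an intersection of minimum events together with the clean factorization across trees, since both rely on the underlying forest structure of the CRP-component DAG; once that structural property is in place, the remaining induction via memorylessness is entirely routine.
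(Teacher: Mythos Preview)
Your approach is genuinely different from the paper's. The paper proves the lemma by direct induction on~$K$: it conditions on which root node $l\in\mathcal{R}_K$ appears last in the topological order, applies the inductive hypothesis to the DAG with $l$ removed, and closes the argument via the identity $\sum_{l\in\mathcal{R}_K}\sum_{j\in V_l}c_j=\sum_j c_j$. Your exponential-race argument is more conceptual: the left-hand side becomes $(\prod_j c_j)^{-1}\,\mathbb{P}(\mathcal{A})$, and the right-hand side emerges from a recursive minimum computation via memorylessness. Both routes ultimately rest on the same structural fact, but your version makes the probabilistic content of the identity transparent and ties in naturally with the exponential limits appearing elsewhere in the paper.

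Two remarks on your outline. First, the rewriting $\mathcal{A}=\bigcap_k\{E_k=\min_{j\in V_k}E_j\}$ does \emph{not} require any forest hypothesis: it follows directly from $V_k$ being the forward-reachable set of~$k$, so along any directed path the $E$-values are increasing, and conversely each arc $(i,j)$ gives $j\in V_i$ and hence $E_i<E_j$. You can therefore drop that concern. Second, the genuine obstacle is precisely the factorization step, which needs the root-subtrees $\{V_l:l\in\mathcal{R}_K\}$ to partition $\{1,\dots,K\}$ and, recursively, the children's subtrees to partition $V_r\setminus\{r\}$; this is exactly the statement that the DAG is an out-forest (every node has in-degree at most one). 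Your citations of Lemma~\ref{lem:N_K_versus_Sigma} and~\cite{Afeche2021} do not supply this property --- Lemma~\ref{lem:N_K_versus_Sigma} nowhere uses or proves a forest structure. The paper's own inductive proof invokes the identical assumption (``each node of the DAG is part of precisely one subtree governed by the root nodes'') without justification, so your argument and the paper's are at the same level of rigor on this point; but to make either proof airtight you would need to verify that the CRP-component DAG is indeed an out-forest, or cite a result from~\cite{Afeche2021} that establishes it.
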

\begin{proof}
The result is shown by induction on~$K$, and is trivially true for $K=1$. The induction step follows after fixing the last value of the permutation, i.e., $i_K$, and applying the result for lower values than $K$. Note that only a root node of the DAG can be positioned at the end of the permutation. Let $\mathcal{R}_K$ denote the set of root nodes of the DAG and $\Sigma_K(k)$ denote the subset of $\Sigma_K$ where the last entry is given by $k$, with $k\in\mathcal{R}_K$.
Then,
\begin{equation}\label{eq:intermediate_induction}
\begin{array}{rl}
 &  \sum\limits_{\sigma \in\Sigma_K}
\frac{1}{c_{\sigma(1)}(c_{\sigma(1)}+c_{\sigma(2)}) \dots (c_{\sigma(1)}+\dots+c_{\sigma(K)})}\\

= & \sum\limits_{l\in\mathcal{R}_K} \sum\limits_{\sigma\in\Sigma_K(l)}
\frac{1}{c_{\sigma(1)}(c_{\sigma(1)}+c_{\sigma(2)}) \dots (c_{\sigma(1)}+\dots+c_{\sigma(K)})}\\

= & \sum\limits_{l\in\mathcal{R}_K}  \frac{1}{c_{\sigma(1)}+\dots+c_{\sigma(K)}}\sum\limits_{\sigma\in\Sigma_K(l)}
\frac{1}{c_{\sigma(1)}(c_{\sigma(1)}+c_{\sigma(2)}) \dots (c_{\sigma(1)}+\dots+c_{\sigma(K-1)})}.\\
\end{array}
\end{equation}
Let $V_k^{(l)}$ denote the subgraph rooted at node $k$ in the DAG with root node $l$ removed from the DAG, $k\neq l$. We can then apply the induction hypothesis to conclude that the final expression in~\eqref{eq:intermediate_induction} is equal to
\begin{equation}
\sum\limits_{l\in\mathcal{R}_K}  \frac{1}{c_{\sigma(1)}+\dots+c_{\sigma(K)}}\prod\limits_{k=1}^{K-1} \frac{1}{\sum_{j\in V_k^{(l)}} c_j}.
\end{equation}
Notice that the collection of subgraphs governed by the nodes in  $V_l, \{V_k^{(k)}\colon  k = 1,\dots, K, k\neq l \}$ is the same as the collection of subgraphs governed by $\{V_k \colon k=1,\dots, K \}$ as $l$ is a root node of the original DAG. So we obtain 
\begin{equation}
\begin{array}{rl}
& \sum\limits_{l\in\mathcal{R}_K}  \frac{1}{c_{\sigma(1)}+\dots+c_{\sigma(K)}}
\left(\prod\limits_{k=1}^{K} \frac{1}{\sum_{j\in V_k} c_j}\right) \left( \sum_{j\in V_l} c_j \right) 
= \prod\limits_{k=1}^{K} \frac{1}{\sum_{j\in V_k} c_j}
\frac{1}{\sum_{k=1}^{K} c_{\sigma(k)}}
\sum\limits_{l\in\mathcal{R}_K}  
 \sum\limits_{j\in V_l} c_j  \\
 =& \prod\limits_{k=1}^{K} \frac{1}{\sum_{j\in V_k} c_j}
\frac{1}{\sum_{k=1}^{K} c_{\sigma(k)}}
\sum\limits_{j=1}^K   c_j  
= \prod\limits_{k=1}^{K} \frac{1}{\sum_{j\in V_k} c_j},
\end{array}
\end{equation}
where we used that each node of the DAG is part of precisely one subgraph governed by the root nodes.
This concludes the proof.
\end{proof}

\begin{remark}\label{rem:Guido_1}
The identity in Lemma~\ref{lem:Guido_extension} is a generalization of the identity
\begin{equation}
\sum\limits_{(i_1,\dots,i_K) \in\mathrm{perm}(K)}
\frac{1}{c_{i_1}(c_{i_1}+c_{i_2}) \dots (c_{i_1}+\dots+c_{i_K})}
=
\prod\limits_{j=1}^K \frac{1}{ c_j},
\end{equation}
where $\mathrm{perm}(K)$ denotes the set of \emph{all} permutations of the integers $1,\dots,K$.
Alternatively, one can think of a DAG which solely consists of isolated vertices where $\Sigma_K$ includes all permutations. Such a DAG can occur when analyzing a set of $K$ isolated single-server queues.
\end{remark}

Using the above lemma, we can aggregate the mixture weights.

\begin{proposition}\label{prop:prob_t_sigma}
 Let $\sigma\in\Sigma_K$ with $\Sigma_K$ the set of all topological orderings of a DAG that satisfies Assumption~\ref{cond:root} and define
\begin{equation} \label{eq:hat_beta_sigma_K}
\hat{\beta}(\sigma) \coloneqq  \prod\limits_{k=1}^K \frac{1}{p\left(\mathcal{C}_{\sigma(1)},\dots,\mathcal{C}_{\sigma(k)}\right)}
\quad\text{and}\quad
\hat{\beta}(\Sigma_K) \coloneqq  \prod\limits_{k=1}^K \frac{1}{p(V_k)}.
\end{equation}
Then,
\begin{equation}
\mathbb{P}^*(\boldsymbol{T}^{\sigma}) \coloneqq \sum\limits_{\boldsymbol{T}\in\boldsymbol{T}^{\sigma}} \mathbb{P}^*(\boldsymbol{T}) = \frac{\hat{\beta}(\sigma)}{\hat{\beta}(\Sigma_K)}.
\end{equation}
\end{proposition}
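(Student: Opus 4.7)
The plan is to exploit the partition $\mathcal{N}_K=\bigcup_{\sigma\in\Sigma_K}\boldsymbol{T}^{\sigma}$ from Lemma~\ref{lem:N_K_versus_Sigma} together with the identity in Lemma~\ref{lem:Guido_extension}. Since $\mathbb{P}^{*}(\boldsymbol{T})=\beta(\boldsymbol{T})/\beta(\mathcal{N}_K)$, we have
\[
\mathbb{P}^{*}(\boldsymbol{T}^{\sigma})=\frac{\sum_{\boldsymbol{T}\in\boldsymbol{T}^{\sigma}}\beta(\boldsymbol{T})}{\sum_{\sigma'\in\Sigma_K}\sum_{\boldsymbol{T}\in\boldsymbol{T}^{\sigma'}}\beta(\boldsymbol{T})},
\]
so it suffices to show that $\sum_{\boldsymbol{T}\in\boldsymbol{T}^{\sigma}}\beta(\boldsymbol{T})=C\cdot\hat\beta(\sigma)$ for some constant $C$ independent of $\sigma$. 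Applying Lemma~\ref{lem:Guido_extension} with $c_i=p(\mathcal{C}_i)$ then yields $\sum_{\sigma}\hat\beta(\sigma)=\hat\beta(\Sigma_K)$, so the normalizing denominator equals $C\hat\beta(\Sigma_K)$ and the claim follows.

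The bulk of the work lies in proving the proportionality in the previous paragraph. For $\boldsymbol{T}\in\boldsymbol{T}^{\sigma}$, the critical positions are exactly $i_k=\sum_{j=1}^{k}|\mathcal{C}_{\sigma(j)}|$ for $k=1,\dots,K$, so the entries of $\boldsymbol{T}$ split into $K$ consecutive blocks, block $k$ being a permutation $\pi_k$ of $\mathcal{C}_{\sigma(k)}$. By the criticality of $T_{\sigma,j}$ we have $N\lambda^{*}p(T_{\sigma,j})=\mu(T_{\sigma,j})=\mu(Z_{\sigma,j})$, so the factor at the critical position $i_k$ simplifies to $p_{T_{i_k}}/p(T_{\sigma,k})$. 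At a non-critical position $i$ inside block $k$, the topological-ordering argument used in the proof of Lemma~\ref{lem:N_K_versus_Sigma} shows that every server compatible with a type in $T_{\sigma,k-1}$ already lies in $Z_{\sigma,k-1}$; combined with the criticality of $T_{\sigma,k-1}$ this yields
\[
\mu(\boldsymbol{T},i)-N\lambda^{*}p(\boldsymbol{T},i)=\tilde\mu_k(\pi_k,i-i_{k-1})-N\lambda^{*}\tilde p_k(\pi_k,i-i_{k-1}),
\]
where $\tilde\mu_k$ and $\tilde p_k$ depend only on the prefix of $\pi_k$ and on the servers $\mathcal{Z}_{\sigma(k)}$. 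Hence $\beta(\boldsymbol{T})$ factorizes as $\prod_{k=1}^{K}B_k(\pi_k)$, where $B_k(\pi_k)$ is determined entirely by the CRP component $\mathbb{C}_{\sigma(k)}$ and the internal ordering $\pi_k$, with no dependence on the rest of $\sigma$.

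Summing $B_k$ over permutations of $\mathcal{C}_{\sigma(k)}$ therefore yields an expression of the form $\gamma_{\sigma(k)}/p(T_{\sigma,k})$, with $\gamma_i$ depending only on $\mathbb{C}_i$; multiplying over $k$ and using that $\sigma$ is a permutation of $\{1,\dots,K\}$ gives
\[
\sum_{\boldsymbol{T}\in\boldsymbol{T}^{\sigma}}\beta(\boldsymbol{T})=\Bigl(\prod_{i=1}^{K}\gamma_i\Bigr)\prod_{k=1}^{K}\frac{1}{p(T_{\sigma,k})}=C\,\hat\beta(\sigma),
\]
which, combined with the application of Lemma~\ref{lem:Guido_extension} outlined in the first paragraph, completes the proof. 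The main obstacle is the block factorization and the $\sigma$-invariance of the constants $\gamma_i$: one must carefully verify that each intermediate $\mu(\boldsymbol{T},i)$ and $p(\boldsymbol{T},i)$ decomposes into a ``completed'' part governed by the criticality of $T_{\sigma,k-1}$ plus a block-$k$ increment residing entirely in $\mathcal{Z}_{\sigma(k)}$, which is ultimately a consequence of the residual-matching / topological-order structure inherited from Construction~\ref{construction}.
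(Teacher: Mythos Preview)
Your strategy is exactly the paper's: show $\sum_{\boldsymbol{T}\in\boldsymbol{T}^{\sigma}}\beta(\boldsymbol{T})=C\,\hat\beta(\sigma)$ with $C$ independent of $\sigma$ by a block-wise factorization of $\beta(\boldsymbol{T})$, then normalize via Lemma~\ref{lem:Guido_extension} with $c_i=p(\mathcal{C}_i)$. The paper phrases the block reduction through the subtree $V_{\sigma(k)}$ (via three DAG observations) rather than directly through $\mathcal{Z}_{\sigma(k)}$, but since both $V_{\sigma(k)}$ and $\mathcal{Z}_{\sigma(k)}$ are determined by $\sigma(k)$ alone, the two routes are equivalent; your route is in fact slightly cleaner.

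One small slip to fix: the intermediate claim that $\beta(\boldsymbol{T})=\prod_k B_k(\pi_k)$ with each $B_k$ depending \emph{only} on $\mathbb{C}_{\sigma(k)}$ is not literally true, because the critical-position factor at $i_k$ equals $p_{T_{i_k}}/p(T_{\sigma,k})$ and $p(T_{\sigma,k})=\sum_{j\le k}p(\mathcal{C}_{\sigma(j)})$ depends on the full prefix of $\sigma$. The correct decomposition (which you implicitly use in your very next line) is
\[
\beta(\boldsymbol{T})=C_0\cdot\prod_{k=1}^{K}\frac{1}{p(T_{\sigma,k})}\cdot\prod_{k=1}^{K}\tilde g_{\sigma(k)}(\pi_k),
\]
with only the $\tilde g$-factors block-local. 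After this cosmetic correction your computation $\sum_{\boldsymbol{T}\in\boldsymbol{T}^{\sigma}}\beta(\boldsymbol{T})=(\prod_i\gamma_i)\hat\beta(\sigma)$ goes through and coincides with the paper's equation~\eqref{eq:last_step_P_star_T_sigma}.
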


\begin{example}
With the $\sigma$-ordered vectors as in Example~\ref{exam:sigma_ordered_vectors} we obtain the following weights using the above lemma. Let 
$\hat{\beta}((1,2,3)) = \frac{16}{3}$, $\hat{\beta}((2,1,3)) = \frac{8}{3}$ and $\hat{\beta}(\Sigma_K) = 8$ so that
$\mathbb{P}^*(\boldsymbol{T}^{(1,2,3)})= \frac{2}{3}$ and $\mathbb{P}^*(\boldsymbol{T}^{(2,1,3)})=\frac{1}{3}.$
Note that the same weights can be obtained after appropriately adding the weights of the $K$-critical vectors computed in Example~\ref{example:mixture_dist}. Due to Lemmas~\ref{lem:N_K_versus_Sigma} and~\ref{lem:mixture_components} we can simplify the mixture distribution in Example~\ref{example:mixture_dist} as follows:
\begin{equation}
\left( X_{\{1\}}, X_{\{1,2,3\}},X_{\{3\}},X_{\{3,4\}}\right) \overset{d}{=}
\left\{
\begin{array}{lcr}
\left( U_1 + \frac{1}{3}U_2+\frac{1}{4}U_3, \frac{1}{4}U_3, \frac{2}{9}U_2+\frac{1}{6}U_3, \frac{4}{9}U_2+ \frac{1}{3}U_3\right),  & & \text{w.p.~} \frac{2}{3},\\
\left( \frac{1}{3}U_2+\frac{1}{4}U_3, \frac{1}{4}U_3, \frac{1}{3}U_1 + \frac{2}{9}U_2+\frac{1}{6}U_3, \frac{2}{3}U_1 + \frac{4}{9}U_2+ \frac{1}{3}U_3\right),
 & & \text{w.p.~} \frac{1}{3}.\\
\end{array}
\right.
\end{equation}
\end{example}

\begin{proof}[Proof of Proposition~\ref{prop:prob_t_sigma}]
We first focus on a particular $\boldsymbol{T}\in\boldsymbol{T}^{\sigma}$ for some $\sigma\in\Sigma_K$, and emphasize how $\mathbb{P}^*(\boldsymbol{T})$ depends on $\mathrm{CR}(\boldsymbol{T})=\{i_1,\dots,i_K\}$ and $\sigma$ rather than the actual ordering of the job types within each critical component. Say that $\boldsymbol{T} = \left[ \boldsymbol{C}_{\sigma(1)},\dots,\boldsymbol{C}_{\sigma(K)}\right]$ with $\boldsymbol{C}_{\sigma(k)}$ some ordering of the set $\mathcal{C}_{\sigma(k)}$ for all $k$. So,
\begin{equation}
\mathbb{P}^*(\boldsymbol{T})   
=
\alpha 
\prod\limits_{k=1}^K \prod\limits_{i=1}^{|\mathcal{C}_{\sigma(k)}|}
 \frac{N\lambda^* p_{\sigma,k,i}}{\mu(\sigma,k,i)}
  \prod\limits _{i=1}^{|\mathcal{C}_{\sigma(k)}|-1}\left( 1-\frac{N\lambda^*p(\sigma,k,i)}{\mu(\sigma,k,i)}\right)^{-1},
\end{equation}
where $p_{\sigma,k,i}$ denotes the arrival fraction of the $i$th job type in $\boldsymbol{C}_{\sigma(k)}$,
\begin{equation}
\begin{array}{rcl}
p(\sigma,k,i) &=&  \sum\limits_{j=1}^{k-1} \sum\limits_{l=1}^{|\mathcal{C}_{\sigma(j)}|} p_{\sigma,j,l} + \sum\limits_{j=1}^{i} p_{\sigma,k,l} = p(\mathcal{C}_{\sigma(1)},\dots,\mathcal{C}_{\sigma(k-1)}) + \sum\limits_{j=1}^{i} p_{\sigma,k,l},\\
\mu(\sigma,k,i) &=& \sum\limits_{n=1}^N \mu_n \mathds{1}\left\{n \text{~is compatible with a job type in~} \cup_{j=1}^{k-1} \mathcal{C}_{\sigma(j)} \cup \{ C_{\sigma(k),1},\dots,C_{\sigma(k),i} \}  \right\},
\end{array}
\end{equation}
and $\alpha$ an appropriate normalization constant, i.e., $\alpha = \beta(\mathcal{N}_K)^{-1}$. Notice that for each $\boldsymbol{T}\in\mathcal{N}_K$ the same job types will occur and that all these job types contribute with factor $N\lambda^* p_S$ for those job types~$S$, irrespective of $\sigma$ such that $\boldsymbol{T}\in\boldsymbol{T}^{\sigma}$. Hence, we can update the normalization constant to $\alpha'$ and write
\begin{equation}
\begin{array}{rcl}
\mathbb{P}^*(\boldsymbol{T}) & =& 
\alpha' 
\prod\limits_{k=1}^K \prod\limits_{i=1}^{|\mathcal{C}_{\sigma(k)}|}
 \frac{1}{\mu(\sigma,k,i)}
  \prod\limits _{i=1}^{|\mathcal{C}_{\sigma(k)}|-1} \frac{\mu(\sigma,k,i)}{ \mu(\sigma,k,i) -N\lambda^*p(\sigma,k,i)}\\
& =& 
\alpha' 
\prod\limits_{k=1}^K 
 \frac{1}{\mu(\sigma,k,|\mathcal{C}_{\sigma(k)}|)}
  \prod\limits _{i=1}^{|\mathcal{C}_{\sigma(k)}|-1} \frac{1}{ \mu(\sigma,k,i) -N\lambda^*p(\sigma,k,i)}.
\end{array}
\end{equation}
Using the fact that $\{\mathcal{C}_{\sigma(1)},\dots, \mathcal{C}_{\sigma(k)}\}$ are critical subsets for all $k$ and hence that 
$$N\lambda^* p(\mathcal{C}_{\sigma(1)},\dots, \mathcal{C}_{\sigma(k)}) = N\lambda^* p(\sigma,k,|\mathcal{C}_{\sigma(k)}|) =  \mu(\sigma,k,|\mathcal{C}_{\sigma(k)}|)$$
yields
\begin{equation}
\mathbb{P}^*(\boldsymbol{T}) =
\alpha'' 
\prod\limits_{k=1}^K 
 \frac{1}{p(\mathcal{C}_{\sigma(1)},\dots, \mathcal{C}_{\sigma(k)})}
  \prod\limits _{i=1}^{|\mathcal{C}_{\sigma(k)}|-1} \frac{1}{ \mu(\sigma,k,i) -N\lambda^*p(\sigma,k,i)}\\
\end{equation}
for an updated normalization constant~$\alpha''$. 
Notice that the first part only depends on the chosen permutation~$\sigma$ of the sets of job types of the critical components, $\mathcal{C}_{\sigma(1)},\dots, \mathcal{C}_{\sigma(K)}$ and not on the order in which these types occur within~$\boldsymbol{T}$, i.e., $\boldsymbol{C}_{\sigma(1)},\dots, \boldsymbol{C}_{\sigma(K)}$. 
We now rewrite the denominator of the second part of the expression. The main idea to simplify this expression in the denominator is by neglecting those components that experience criticality. Before doing so we make the following observations:
\begin{enumerate}
\item  Each CRP component $\mathbb{C} = (\mathcal{C},\mathcal{Z})$ which corresponds to a leaf node in the DAG gives rise to a critical subset of job types. Indeed, this $\mathbb{C}$ can be positioned first in a topological ordering such that by Construction~\ref{construction} $\mathcal{C}$ is recovered as a critical set.
\item Let $\mathbb{C}_k = (\mathcal{C}_k,\mathcal{Z}_k)$ be a node in the DAG and $V_k$ be the subgraph of the DAG rooted at $\mathbb{C}_k$. The collection of job types in $V_k$ gives rise to a critical subset of job types, since we can generate a topological ordering with precisely those nodes associated with $V_k$ first. Moreover, $\mathbb{C}_k$ will be positioned last (with respect to the other CRP components induced by $V_k$) in this topological ordering since it is the root of the subgraph. Again, $V_k$ then induces a critical subset of job types by Construction~\ref{construction}.
\item Let $(\mathbb{C}_{\sigma(1)},\dots,\mathbb{C}_{\sigma(k)})$ be the first $k$ elements in a topological ordering. Then there exist an $L\in\{1,\dots,K\}$ and indices $\{j_1,\dots,j_L\}\subseteq\{1,\dots,K\}$ such that the nodes $\mathbb{C}_{\sigma(1)},\dots,\mathbb{C}_{\sigma(k)}$ can be partitioned into $L$ subgraphs rooted at nodes $\mathbb{C}_{j_1},\dots,\mathbb{C}_{j_L}$, i.e.,
$\bigcup_{l=1}^k \mathcal{C}_{\sigma(l)} = \bigcup_{l=1}^L V_{j_l}$. 
If this were not the case, the topological ordering would violate the DAG structure.
\end{enumerate}
Let $V_{\sigma(k)}$ denote the nodes of the subgraph rooted at $\mathbb{C}_{\sigma(k)}$. 
Given the above observations we can partition the job types as 
\begin{equation}
\begin{array}{rcl}
\{\mathcal{C}_{\sigma(1)},\dots,\mathcal{C}_{\sigma(k-1)}\}\cup\{C_{\sigma(k),1},\dots,C_{\sigma(k),i} \}
&=&
(\{\mathcal{C}_{\sigma(1)},\dots,\mathcal{C}_{\sigma(k-1)}\}\setminus V_{\sigma(k)})
\dot{\cup}
(V_{\sigma(k)} \setminus \mathcal{C}_{\sigma(k)}) \\
& & 
\dot{\cup}\{C_{\sigma(k),1},\dots,C_{\sigma(k),i} \}.
\end{array}
\end{equation}
It immediately follows that $$
N\lambda^* p(\sigma,k,i)  = N\lambda^* p(V_{\sigma(k)} \setminus \mathcal{C}_{\sigma(k)} ) + N\lambda^* p([C_{\sigma(k),1},\dots, C_{\sigma(k),i}]) + N\lambda^* p( \{\mathcal{C}_{\sigma(1)},\dots,\mathcal{C}_{\sigma(k)} \} \setminus V_{\sigma(k)}).$$
Moreover, due to the structure of the topological ordering (and the observations above), we have that $\mu(\sigma,k,i)  =\mu(V_{\sigma(k)} \setminus \mathcal{C}_{\sigma(k)} ) + \mu([C_{\sigma(k),1},\dots, C_{\sigma(k),i}]) + \mu( \{\mathcal{C}_{\sigma(1)},\dots,\mathcal{C}_{\sigma(k)} \} \setminus V_{\sigma(k)})$.
The third observation implies $N\lambda^* p( \{\mathcal{C}_{\sigma(1)},\dots,\mathcal{C}_{\sigma(k)} \} \setminus V_{\sigma(k)}) = \mu( \{\mathcal{C}_{\sigma(1)},\dots,\mathcal{C}_{\sigma(k)} \} \setminus V_{\sigma(k)})$,
such that 
\begin{equation}
\begin{array}{rcl}
\mu(\sigma,k,i) - N\lambda^* p(\sigma,k,i) &=&  \mu(V_{\sigma(k)} \setminus \mathcal{C}_{\sigma(k)} ) + \mu([C_{\sigma(k),1},\dots, C_{\sigma(k),i}])-N\lambda^* p(V_{\sigma(k)} \setminus \mathcal{C}_{\sigma(k)} )\\
& &+ N\lambda^* p([C_{\sigma(k),1},\dots, C_{\sigma(k),i}]), 
\end{array}
\end{equation}
which only depends on the job types in the rooted subgraph starting at node $\mathbb{C}_{\sigma(k)}$, i.e., $V_{\sigma(k)}$, and the ordering of the job types in just this node, i.e., $\boldsymbol{C}_{\sigma(k)}$. To slightly ease the notation, let us refer to the above notation as $\mu(V_{\sigma(k)},\boldsymbol{C}_{\sigma(k)},i) - N\lambda^* p(V_{\sigma(k)},\boldsymbol{C}_{\sigma(k)},i)$.

Combining the above yields
\begin{equation}\label{eq:last_step_P_star_T_sigma}
\begin{array}{rcl}
\mathbb{P}^*(\boldsymbol{T}^{\sigma}) & = & \sum\limits_{\boldsymbol{T}\in\boldsymbol{T}^{\sigma}} \mathbb{P}^*(\boldsymbol{T})\\

&= & \alpha'' \sum\limits_{\boldsymbol{T}\in\boldsymbol{T}^{\sigma}} \prod\limits_{k=1}^K \frac{1}{p(\mathcal{C}_{\sigma(1)},\dots, \mathcal{C}_{\sigma(k)})}
  \prod\limits _{i=1}^{|\mathcal{C}_{\sigma(k)}|-1} \frac{1}{ \mu(V_{\sigma(k)},\boldsymbol{C}_{\sigma(k)},i) - N\lambda^* p(V_{\sigma(k)},\boldsymbol{C}_{\sigma(k)},i)}\\
  
  &= & \alpha''  \prod\limits_{k=1}^K \frac{1}{p(\mathcal{C}_{\sigma(1)},\dots, \mathcal{C}_{\sigma(k)})}
   \sum\limits_{\boldsymbol{T}\in\boldsymbol{T}^{\sigma}} \prod\limits_{k=1}^K 
  \prod\limits _{i=1}^{|\mathcal{C}_{\sigma(k)}|-1} \frac{1}{ \mu(V_{\sigma(k)},\boldsymbol{C}_{\sigma(k)},i) - N\lambda^* p(V_{\sigma(k)},\boldsymbol{C}_{\sigma(k)},i)}\\
  
    &= & \alpha''  \prod\limits_{k=1}^K \frac{1}{p(\mathcal{C}_{\sigma(1)},\dots, \mathcal{C}_{\sigma(k)})} \cdot
    \prod\limits_{k=1}^K \sum\limits_{\boldsymbol{C}_{\sigma(k)}\in\mathrm{perm}(\mathcal{C}_{\sigma(k)})}
  \prod\limits _{i=1}^{|\mathcal{C}_{\sigma(k)}|-1} \frac{1}{ \mu(V_{\sigma(k)},\boldsymbol{C}_{\sigma(k)},i) - N\lambda^* p(V_{\sigma(k)},\boldsymbol{C}_{\sigma(k)},i)}\\
  
      &= & \alpha''  \prod\limits_{k=1}^K \frac{1}{p(\mathcal{C}_{\sigma(1)},\dots, \mathcal{C}_{\sigma(k)})} \cdot
    \prod\limits_{k=1}^K \sum\limits_{\boldsymbol{C}_{k}\in\mathrm{perm}(\mathcal{C}_{k})}
  \prod\limits _{i=1}^{|\mathcal{C}_{k}|-1} \frac{1}{ \mu(V_{k},\boldsymbol{C}_{k},i) - N\lambda^* p(V_{k},\boldsymbol{C}_{k},i)}\\
  
        &= & \alpha'''  \prod\limits_{k=1}^K \frac{1}{p(\mathcal{C}_{\sigma(1)},\dots, \mathcal{C}_{\sigma(k)})}.
\end{array}
\end{equation}
For the last step we used that the second part of the expression no longer depends on $\sigma$ and is in fact the same for all $\sigma\in\Sigma_K$. 
The normalization constant of the above expression for all $\sigma\in\Sigma_K$ can be rewritten into the desired format as depicted in~\eqref{eq:hat_beta_sigma_K} using the identity in 
Lemma~\ref{lem:Guido_extension}.
\end{proof}

We have now established all auxiliary results to prove Proposition~\ref{prop:equality_GF}.

\begin{proof}[Proof of Proposition~\ref{prop:equality_GF}]
Using the above results, we can show that the Laplace transforms of $(Y_S)_{S\in\mathcal{S}}$ and $(X_S)_{S\in\mathcal{S}}$ coincide.
We consider $(X_S)_{S\in\mathcal{S}}$ and condition on $\sigma\in\Sigma_K$ to aggregate the various mixture components (Lemmas~\ref{lem:N_K_versus_Sigma} and~\ref{lem:mixture_components}),
\begin{equation}
\mathbb{E}\Bigr[ \prod\limits_{S\in\mathcal{S}} \mathrm{e}^{-t_S X_S} \Bigr]
=
\sum\limits_{\sigma\in\Sigma_K} 
\mathbb{P}^*(\boldsymbol{T}^{\sigma})
\mathbb{E}\Bigr[ \prod\limits_{S\in\mathcal{S}} \mathrm{e}^{-t_S X_S} \mid \boldsymbol{T}^{\sigma} \Bigr],
\end{equation}
 for all $t_S\ge 0$. Next, we rely on~\eqref{eq:X_s} to obtain
 \begin{equation}
\sum\limits_{\sigma\in\Sigma_K} 
\mathbb{P}^*(\boldsymbol{T}^{\sigma})
\mathbb{E}\Bigr[ 
\prod\limits_{k=1}^K \exp \Bigl( 
-U_k \Bigl(
 \sum_{i=1}^k \sum_{S\in\mathcal{C}_{\sigma(i)}} t_S \frac{p_S}{p(\mathcal{C}_{\sigma(1)},\dots,\mathcal{C}_{\sigma(k)})} \Bigl) \Bigl) \Bigr].
 \end{equation}
 Since $U_1,\dots,U_K$ are independent and exponentially distributed with unit mean, we can write
 \begin{equation}
\sum\limits_{\sigma\in\Sigma_K} 
\mathbb{P}^*(\boldsymbol{T}^{\sigma})
\prod\limits_{k=1}^K
\Bigl( 1+ \sum_{i=1}^k \sum_{S\in\mathcal{C}_{\sigma(i)}} t_S \frac{p_S}{p(\mathcal{C}_{\sigma(1)},\dots,\mathcal{C}_{\sigma(k)})} \Bigl)^{-1}.
 \end{equation}
 Using the expression in Proposition~\ref{prop:prob_t_sigma} for $\mathbb{P}^*(\boldsymbol{T}^{\sigma})$, we obtain
 \begin{equation}
 \begin{array}{rl}
      &  \frac{1}{\hat{\beta}(\Sigma_K)}
\sum\limits_{\sigma\in\Sigma_K} 
\prod\limits_{k=1}^K
\Bigl( p(\mathcal{C}_{\sigma(1)},\dots,\mathcal{C}_{\sigma(k)}){+} \sum\limits_{i=1}^k \sum\limits_{S\in\mathcal{C}_{\sigma(i)}} t_S p_S \Bigl)^{-1}\\
     = & \frac{1}{\hat{\beta}(\Sigma_K)}
\sum\limits_{\sigma\in\Sigma_K} 
\prod\limits_{k=1}^K
\Bigl( \sum\limits_{i=1}^k \Bigl( p(\mathcal{C}_{\sigma(i)}){+}  \sum\limits_{S\in\mathcal{C}_{\sigma(i)}} t_S p_S \Bigl) \Bigl)^{-1}
.
 \end{array}
 \end{equation}
Using the identity in Lemma~\ref{lem:Guido_extension} with $c_k = p(\mathcal{C}_k) + \sum_{S\in\mathcal{C}_k} t_S p_S$ for all $k$ and substituting the expression for $\hat{\beta}(\Sigma_K)$ in~\eqref{eq:hat_beta_sigma_K}, results in
 \begin{equation}
 \frac{1}{\hat{\beta}(\Sigma_K)}
\prod\limits_{k=1}^K
\Bigl(  p(V_{k})+  \sum\limits_{S\in\mathcal{C}_{k}} t_S p_S  \Bigl)^{-1}
 =
\prod\limits_{k=1}^K
\Bigl( 1+  \sum\limits_{S\in V_{k}} t_S \frac{p_S}{ p(V_{k})}  \Bigl)^{-1}. 
 \end{equation}
This concludes the proof.
\end{proof}

\section{Outlook}\label{sec:outlook}
Our methods and results suggest two natural topics for further research.

First of all, it would be interesting to apply the above framework to slightly different models that also have product-form stationary distributions. 
One can think of, for instance, order-independent queues~\cite{Krzesinski2011} or redundancy policies operating in overloaded systems with abandonments~\cite{Gardner2020}. 
Although the product-form expressions seem fairly similar at first glance, they have a different way to describe the total rate at which jobs leave the system. This complicates the derivation of the PGF as 
aggregation of the states according to the $k$-critical vectors (Definition~\ref{def:crit_vectors}) will no longer result in a closed-form expression.
A similar analysis has also proven to be successful for related models without the queueing feature, e.g., matching models, which still maintain a product-form stationary distribution under some form of a CRP condition~\cite{Comte2022}. It would be interesting to explore whether these conditions could be relaxed.

From a broader perspective, it is worth observing that the stability conditions in Condition~\ref{cond:stab} are not only necessary but also sufficient for a far wider range of routing and scheduling policies~\cite{Foss1998}. Consequently, all these `maximally stable' or `throughput-optimal' policies have the same (number of) critically loaded subsystems as redundancy policies, and might therefore potentially also exhibit quite similar heavy-traffic behavior. This seems especially plausible for the celebrated Join-the-Shortest-Queue (JSQ) policy since it is similar in spirit to the Join-the-Smallest-Workload (JSW) policy, which in turn is equivalent to the redundancy c.o.s. policy as noted earlier. Indeed, similarities between JSQ and JSW in terms of process-level limits and (full) state space collapse results have been observed in CRP scenarios~\cite{Atar2019,Atar2019replicate}.  It would be interesting to explore whether this extends to non-CRP scenarios, and whether some asymptotic equivalence property for a wider class of policies might hold. In this regard it is worth recalling the close resemblance with the limiting queue length distribution for the MaxWeight scheduling algorithm in the special case of the N-model with equal service rates. While it is difficult to extrapolate from such a highly special case, the striking commonality suggests that the independent exponentially distributed random variables associated with the critically loaded subsystems may arise more universally. We conjecture however that the specific form of the cone and the relative proportions of the various job types are in general policy-dependent.

\section*{Acknowledgments}
The authors thank Guido Janssen for showing an inductive proof of the identity in Remark~\ref{rem:Guido_1}, after which the proof of Lemma~\ref{lem:Guido_extension} has been modeled, and for providing the argument of the proof of Lemma~\ref{lem:moments_Guido}.

\bibliographystyle{abbrv}
\bibliography{references}

\appendix

\section{Preliminaries - auxiliary results and proofs}

\subsection{Redundancy c.o.s.}\label{app:prelim_cos}
\begin{proposition}\label{prop:pgfcos}
The joint PGF of the numbers of waiting jobs of the various types for the redundancy c.o.s.\ policy is given by 
\begin{equation}\label{eq:pgfcos}
\mathbb{E}\left[\prod\limits_{S \in {\mathcal S}} z_S^{\tilde{Q}_S}\right] = \frac{g(\boldsymbol{z})}{g(\boldsymbol{1})},
\end{equation}
where $\boldsymbol{z}$ and $\boldsymbol{1}$  are $|\mathcal{S}|$-dimensional vectors with entries $|z_S| \le 1$ and
\begin{equation}\label{eq:g}
g(\boldsymbol{z})=\sum\limits_{L=0}^{N}\sum\limits_{\boldsymbol{u}\in \mathcal{E}_L}\prod\limits_{l=1}^L \frac{\mu_{u_l}}{\lambda_{\mathcal{C}(u_1,\dots,u_l)}} 
 \sum\limits_{m=0}^{|\mathcal{S}|}\sum\limits_{\boldsymbol{S}\in \mathcal{S}_m^{\boldsymbol{u}}} \prod\limits_{j=1}^m \frac{N\lambda p_{S_j}z_{S_j}}{\mu(S_1,\dots,S_j)}\left(1{-}\frac{N\lambda}{\mu(S_1,\dots,S_j)}\sum\limits_{i=1}^j p_{S_i}z_{S_i}\right)^{-1}.
\end{equation}

The set consisting of all ordered vectors of $L$ idle servers is denoted by $\mathcal{E}_L$.
The $m$-dimensional vector $\boldsymbol{S}$ consists of $m$ different waiting job types that are not compatible with the idle servers $\boldsymbol{u}$, and the set consisting of all these vectors is denoted by $\mathcal{S}_m^{\boldsymbol{u}}$.
\end{proposition}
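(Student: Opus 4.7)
The plan is to derive the PGF by starting from the detailed product-form stationary distribution of the c.o.s. (equivalently, FCFS-ALIS/JSW) policy, as recorded in \cite{visschers2012product,Adan2018,ayesta2018unifying}, and then aggregating sequentially. In that product form the state is a pair consisting of an ordered vector $\boldsymbol{u}=(u_1,\dots,u_L)$ of idle servers (ordered by the time they became idle, longest idle first) together with an ordered vector $\boldsymbol{c}=(c_1,\dots,c_n)$ of type labels for the waiting jobs (oldest first). The stationary probability factors as a product of a server-dependent term $\prod_{l=1}^L \mu_{u_l}/\lambda_{\mathcal{C}(u_1,\dots,u_l)}$, which encodes the likelihood that an idle period at the prefix $(u_1,\dots,u_l)$ is broken by the arrival of a compatible job, and a job-dependent term of the same shape as in the c.o.c.\ case, namely $\prod_{i=1}^n N\lambda p_{c_i}/\mu(c_1,\dots,c_i)$. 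The essential constraint is that no waiting job type in $\boldsymbol{c}$ may be compatible with any server in $\boldsymbol{u}$ (otherwise that server would not be idle), which is precisely the requirement $\boldsymbol{S}\in\mathcal{S}_m^{\boldsymbol{u}}$ that appears in~\eqref{eq:g}.

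First I would fix a pair $(\boldsymbol{u},\boldsymbol{S})$ with $\boldsymbol{u}\in\mathcal{E}_L$ and $\boldsymbol{S}\in\mathcal{S}_m^{\boldsymbol{u}}$, and aggregate the detailed product form over all label sequences $\boldsymbol{c}$ whose order of \emph{first occurrences} is $\boldsymbol{S}=[S_1,\dots,S_m]$. This is exactly the aggregation carried out for the c.o.c.\ policy in the proof of Proposition~\ref{prop:pgf} and Theorem~\ref{th:prelimit_distributions}: between the first appearances of $S_j$ and $S_{j+1}$ sits a segment that may contain arbitrarily many jobs of types $S_1,\dots,S_j$, each one contributing a factor $N\lambda p_{T}/\mu(S_1,\dots,S_j)$, and summing the resulting geometric series in $z_{S_j}$ yields the familiar factor
\[
\prod_{j=1}^{m}\frac{N\lambda p_{S_j}z_{S_j}}{\mu(S_1,\dots,S_j)}\left(1-\frac{N\lambda}{\mu(S_1,\dots,S_j)}\sum_{i=1}^{j} p_{S_i}z_{S_i}\right)^{-1}.
\]
This is the inner sum over $\boldsymbol{S}\in\mathcal{S}_m^{\boldsymbol{u}}$ in~\eqref{eq:g}, with the constraint $\boldsymbol{S}\in\mathcal{S}_m^{\boldsymbol{u}}$ propagated from the state description.

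Next I would sum the resulting expression over the outer variables. Because the server factor $\prod_{l=1}^L \mu_{u_l}/\lambda_{\mathcal{C}(u_1,\dots,u_l)}$ does not depend on $\boldsymbol{z}$, it separates cleanly from the job-type factor, and summing over $L=0,\dots,N$ and $\boldsymbol{u}\in\mathcal{E}_L$ produces the outer sum in~\eqref{eq:g}. The normalisation $g(\boldsymbol{1})$ in the denominator of~\eqref{eq:pgfcos} is then forced by $\mathbb{E}[\prod_S z_S^{\tilde Q_S}]\big|_{\boldsymbol{z}=\boldsymbol{1}}=1$, and stability (Condition~\ref{cond:stab}) guarantees convergence of each geometric sum since $\frac{N\lambda}{\mu(S_1,\dots,S_j)}\sum_{i=1}^j p_{S_i}<1$ for every $\boldsymbol{S}\in\mathcal{S}_m$.

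The main obstacle, and the reason this result requires care rather than being purely mechanical, is verifying the bookkeeping of the state descriptor: one must be sure that summing over all $\boldsymbol{c}$ with a prescribed first-occurrence order $\boldsymbol{S}$ really does reproduce the stated geometric factor, and simultaneously that the idle-server segment cannot host jobs compatible with $\boldsymbol{u}$. Both facts rely on the fact that, in the product form, a waiting job sitting between the first-occurrence positions of $S_j$ and $S_{j+1}$ can only be of type $S_1,\dots,S_j$ (the existing ``seen'' types) and cannot be compatible with any still-idle server, which matches exactly the weight $N\lambda p_T/\mu(S_1,\dots,S_j)$ in the aggregation. Once this combinatorial step is in hand, the identity~\eqref{eq:pgfcos}--\eqref{eq:g} follows by the two-level summation sketched above.
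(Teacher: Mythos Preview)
The paper does not actually provide a proof of Proposition~\ref{prop:pgfcos}; it is stated in Appendix~\ref{app:prelim_cos} as a preliminary result quoted from~\cite{Cardinaels2022}, just as Proposition~\ref{prop:pgf} is. Your proposed derivation---starting from the detailed product-form distribution for the c.o.s.\ state $(\boldsymbol{u},\boldsymbol{c})$ and aggregating over all waiting-job sequences with a fixed order of first occurrences $\boldsymbol{S}$, then summing over idle-server configurations $\boldsymbol{u}$---is correct and is exactly the strategy the paper itself uses for the c.o.c.\ case in the proof of Theorem~\ref{th:prelimit_distributions} (and alludes to for c.o.s.\ in the remark following Theorem~\ref{th:probabilistic_interpretation}). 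The only substantive point of care, which you identify, is the compatibility constraint that the waiting types in $\boldsymbol{S}$ must all be incompatible with every server in $\boldsymbol{u}$; this is what restricts the inner sum to $\mathcal{S}_m^{\boldsymbol{u}}$ rather than $\mathcal{S}_m$.
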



\subsection{Proof of Theorem~\ref{th:construction}}\label{app:proof_construction}

The proof of Theorem~\ref{th:construction} follows immediately from combining the following two lemmas. 

\begin{lemma}\label{lem:construction_1}
For any fixed $\sigma\in\Sigma_K$ and $k\in\{1,\dots,K\}$, $T_{\sigma,k}$ obtained via Construction~\ref{construction} is a critical subset of job types as in Definition~\ref{def:critical_subset}.
\end{lemma}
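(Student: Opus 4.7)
The plan is to verify criticality of $T_{\sigma,k}$ by showing the equality $\mu(T_{\sigma,k}) = N\lambda^* p(T_{\sigma,k})$, which by Definition~\ref{def:critical_subset} is exactly what it means for $T_{\sigma,k}$ to belong to $\mathcal{CR}(\mathcal{S})$. The strategy is to identify the set of servers compatible with any job type in $T_{\sigma,k}$ with the aggregated server set $Z_{\sigma,k}$, and then to invoke the per-component balance identity $N\lambda^{*}p(\mathcal{C}_j)=\mu(\mathcal{Z}_j)$ inherited from \cite[Lemma~3(i)]{Afeche2021}.

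The first step, and the core of the argument, is the set identity
\begin{equation}
\{n \in \{1,\dots,N\} : n \in S \text{ for some } S \in T_{\sigma,k}\} \;=\; Z_{\sigma,k}.
\end{equation}
The inclusion ``$\supseteq$'' is immediate from the remark following Definition~\ref{def:CRP_components}, namely $\mathcal{Z}_j \subseteq \bigcup_{T\in\mathcal{C}_j} T$ for every $j$. For the opposite inclusion I would fix a server $n$ compatible with some $S \in \mathcal{C}_{\sigma(i)}$ with $i \leq k$; since the CRP components partition the servers, there exists $j$ with $n \in \mathcal{Z}_{\sigma(j)}$. If $j=i$ then $n \in Z_{\sigma,k}$ trivially; otherwise the compatibility between $S\in\mathcal{C}_{\sigma(i)}$ and $n \in \mathcal{Z}_{\sigma(j)}$ induces a directed arc $(\mathbb{C}_{\sigma(i)}, \mathbb{C}_{\sigma(j)})$ in the DAG, and because $\sigma$ is a topological order, $\mathbb{C}_{\sigma(j)}$ must occur strictly before $\mathbb{C}_{\sigma(i)}$, giving $j<i\leq k$, hence again $n \in Z_{\sigma,k}$.

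The set identity immediately yields $\mu(T_{\sigma,k}) = \mu(Z_{\sigma,k})$. Using the disjointness of the $\mathcal{Z}_j$'s (they come from different CRP components) and the balance relation $\mu(\mathcal{Z}_{\sigma(i)}) = N\lambda^* p(\mathcal{C}_{\sigma(i)})$ component by component, summation gives
\begin{equation}
\mu(T_{\sigma,k}) \;=\; \sum_{i=1}^{k} \mu(\mathcal{Z}_{\sigma(i)}) \;=\; \sum_{i=1}^{k} N\lambda^* p(\mathcal{C}_{\sigma(i)}) \;=\; N\lambda^* p(T_{\sigma,k}),
\end{equation}
which is precisely the criticality identity.

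The main obstacle is the reverse inclusion in the set identity above, since it is the only step that requires genuine structural input; it crucially uses that $\sigma$ respects the DAG, together with the fact that the CRP components partition the servers. Everything else is bookkeeping: the direct inclusion is provided by the remark, disjointness of the $\mathcal{Z}_j$'s follows from the partition structure, and the per-component balance is inherited from \cite{Afeche2021}.
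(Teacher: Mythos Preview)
Your proof is correct and follows essentially the same approach as the paper: both hinge on the topological-order argument showing that any server compatible with a job type in $T_{\sigma,k}$ must lie in $Z_{\sigma,k}$, and both invoke the per-component balance $N\lambda^{*}p(\mathcal{C}_j)=\mu(\mathcal{Z}_j)$ from \cite[Lemma~3(i)]{Afeche2021}. The only minor difference is that you establish the full set identity (using the remark $\mathcal{Z}_j\subseteq\bigcup_{T\in\mathcal{C}_j}T$ for the reverse inclusion), whereas the paper proves only $\mu(T_{\sigma,k})\le\mu(Z_{\sigma,k})$ and closes the sandwich via the general bound $N\lambda^{*}p(\mathcal{T})\le\mu(\mathcal{T})$; this is a cosmetic distinction and does not change the substance of the argument.
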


\begin{lemma}\label{lem:construction_2}
If $\mathcal{T}$ is a critical subset of job types as in Definition~\ref{def:critical_subset}, it can be recovered using Construction~\ref{construction}.
\end{lemma}


\begin{proof}[Proof of Lemma~\ref{lem:construction_1}]
Fix $\sigma\in\Sigma_K$ and $k\in\{1,\dots,K\}$. We will show that $T_{\sigma,k}$ is a critical subset of job types. Hence by Definition~\ref{def:critical_subset} it must hold that 
\begin{equation}\label{eq:construction_goal}
p(T_{\sigma,k}) = \frac{\mu(T_{\sigma,k})}{N\mu}.
\end{equation}
Note that $\mu(T_{\sigma,k})$ refers to the aggregate service rate of \emph{all} servers compatible with job types $T_{\sigma,k}$.

By definition of the topological ordering, none of the compatible servers of the job types $T_{\sigma,k}$ can be outside of $Z_{\sigma,k}$. Otherwise there exists an arc from one of the components in $\{\mathbb{C}_{\sigma(1)},\dots,\mathbb{C}_{\sigma(k)}\}$ to one of the components in $\{\mathbb{C}_{\sigma(k+1)},\dots,\mathbb{C}_{\sigma(K)}\}$, which implies that the latter component must be positioned earlier in the topological ordering. This yields a contradiction. Hence the compatible servers of $T_{\sigma,k}$ form a subset of $Z_{\sigma,k}$, so $\mu(T_{\sigma,k}) \le \mu(Z_{\sigma,k})$.

Since the CRP components form a partition of the job types and servers, we can write
\begin{equation}
p(T_{\sigma,k}) = p\left(\bigcup\limits_{i=1}^k \mathcal{C}_{\sigma(i)}\right) = \sum\limits_{i=1}^k p\left(\mathcal{C}_{\sigma(i)}\right).
\end{equation}
From \citep[Lemma~3(i)]{Afeche2021} we know that $p\left(\mathcal{C}_{\sigma(k)}\right) = \mu\left(\mathcal{Z}_{\sigma(k)}\right) / (N\mu)$. Hence,
\begin{equation}
p(T_{\sigma,k}) = \frac{1}{N\mu}\sum\limits_{i=1}^k \mu\left(\mathcal{C}_{\sigma(i)}\right)
=\frac{1}{N\mu} \mu\left(\bigcup\limits_{i=1}^k \mathcal{Z}_{\sigma(i)}\right) = \frac{\mu(Z_{\sigma,k})}{N\mu}.
\end{equation}
Combining the above observations results in
\begin{equation}
p(T_{\sigma,k}) \le \frac{\mu(T_{\sigma,k})}{N\mu} \le \frac{\mu(Z_{\sigma,k})}{N\mu} = p(T_{\sigma,k}),
\end{equation}
where the first inequality is due to the stability conditions in Condition~\ref{cond:stab}.
This yields~\eqref{eq:construction_goal}, and we can conclude that $T_{\sigma,k}$ is indeed a critical subset of job types.
\end{proof}

\begin{proof}[Proof of Lemma~\ref{lem:construction_2}]
Assume that $\mathcal{T}$ is a critical subset of job types according to Definition~\ref{def:critical_subset}. We will argue that $\mathcal{T}$ can always be recovered using Construction~\ref{construction}. 

We first introduce some notation. Let 
\begin{equation}
X\coloneqq \bigcup\limits_{S\in\mathcal{T}} S
\end{equation}
denote the set of all compatible servers of job types in $\mathcal{T}$.
Note that earlier we used $\mathcal{T}$ to denote both the set of job types and the compatible servers (as each job type is defined by its compatible servers), and we like to emphasize here the difference between the two.

Recall that the compatibility relations between the job types and servers can be represented by a matching~$M$ with $m_{S,n} = 1$ whenever job type $S$ is compatible with server~$n$ (or $n\in S$), and 0 otherwise.
For any subset of servers $Y\subseteq\{1,\dots,N\}$, the job types that can uniquely be served by them are denoted by $\mathcal{U}(Y)$.  Formally, 
\begin{equation}
\mathcal{U}(Y) = \left\{S \in \mathcal{S}\colon \sum_{n\notin Y} m_{S,n} = 0\right\}.
\end{equation}

In the remainder of the proof we will use the following two observations:\\
\textbf{Observation~1:} $\mathcal{T}\subseteq \mathcal{U}(X)$. Let $S\in \mathcal{T}$ and assume by contradiction that $S\notin \mathcal{U}(X)$. So $\sum_{n\notin X} m_{S,n} >0$, and hence there is a server $\tilde{n}\in\{1,\dots,N\}\setminus X$ such that $m_{S,\tilde{n}} = 1$. However, as $m_{S,\tilde{n}} =1 $, it follows by construction of~$X$ that $\tilde{n}\in~X$. This yields a contradiction.\\
\textbf{Observation~2:} $\mathcal{U}_{M}(Y) \subseteq \mathcal{U}_{\tilde{M}}(Y)$ for any $Y\subseteq\{1,\dots,N\}$ where $M$ and $\tilde{M}$ represent the original and residual matching, respectively. This follows directly from the fact that $m_{S,n} \ge \tilde{m}_{S,n}$ for all job types $S$ and servers $n$. \\

We focus on the set of servers~$X$, and distinguish three cases that each will be investigated separately:
\begin{enumerate}
\item For some $\sigma\in\Sigma_K$ and $k\in\{1,\dots,K\}$, it holds $X = Z_{\sigma,k}$;
\item The set~$X$ can be written as a union of (a subset of) the server sets of the CRP components, i.e., $\{\mathcal{Z}_k \colon k=1,\dots,K\}$. However there exists no permutation~$\sigma\in\Sigma_K$ such that $X = Z_{\sigma,k}$ for some~$k$;
\item The set~$X$ can not be written as a union of server sets of CRP components.
\end{enumerate}
We now show that the first case indeed results in the critical set of job types equal to~$\mathcal{T}$ and that the remaining two cases cannot occur by showing a contradiction.

\textbf{Case 1:} We aim to show that $\mathcal{T}=T_{\sigma,k}$. From Lemma~\ref{lem:construction_1} we know that $T_{\sigma,k}$ is a critical subset of job types and that $p(T_{\sigma,k}) = \mu(Z_{\sigma,k})/(N\mu) = \mu(X)/(N\mu)$. By assumption, $\mathcal{T}$ is a critical subset of job types and hence $p(T) = \mu(X)/(N\mu)$. It then follows that $p(T) = p(T_{\sigma,k})$.

Moreover, from~\cite[Lemma~3(iii)]{Afeche2021} we know that $T_{\sigma,k} = \mathcal{U}_M(X)$. Observation~1 shows that $\mathcal{T}\subseteq \mathcal{U}_M(X)$. So, $\mathcal{T}\subseteq T_{\sigma,k}$.

After combining the above two results, we can indeed conclude that $\mathcal{T} = T_{\sigma,k}$. 

\textbf{Case 2:} Assume, after possibly relabeling the CRP components, that $X = \cup_{i=1}^k \mathcal{Z}_i$ and that there exists no permutation $\sigma$ or topological ordering such that $X = Z_{\sigma,k}$. This implies the existence of a CRP component $\tilde{\mathbb{C}} = (\tilde{\mathcal{C}},\tilde{\mathcal{Z}})$ such that there is an arc $(\mathbb{C}_i,\tilde{\mathbb{C}})$ in the DAG for some $i\in\{1,\dots,k\}$, and hence $\tilde{\mathbb{C}}$ would have to be positioned \emph{before} $\mathbb{C}_i$ in any valid topological order. Hence there must exist a job type $S\in\mathcal{C}_i$ and a server $n\in\tilde{\mathcal{Z}}$ such that $m_{S,n} > 0 $. So, $S \notin \mathcal{U}_M(X)$, then 
\begin{equation}
N\lambda p (\mathcal{U}_M(X) ) \le \sum_{i=1}^k N\lambda p(\mathcal{C}_i ) - N\lambda p_S.
\end{equation}  
From Observation~1, we have that $\mathcal{T}\subseteq \mathcal{U}_M(X)$ and so
\begin{equation}
\begin{array}{rcl}
\mu(X) - N\lambda p(\mathcal{T}) & \ge & \mu(X) -N\lambda p(\mathcal{U}_M(X)) \\\
& = & \sum\limits_{i=1}^k \mu(\mathcal{Z}_i) - N\lambda p(\mathcal{U}_M(X)) \\
& \ge & \sum\limits_{i=1}^k  \left( \mu(\mathcal{Z}_i) - N\lambda p(\mathcal{C}_i) \right)  + N\lambda p_S.
\end{array}
\end{equation}
As $\mathcal{T}$ is a critical subset of job types, $\mu(X) - N\lambda p(\mathcal{T})  \rightarrow 0$ as $\lambda\uparrow\mu$. From \cite[Lemma~3(i)]{Afeche2021} we know that also $\mu(\mathcal{Z}_i) - N\lambda p(\mathcal{C}_i)  \rightarrow 0$ as $\lambda\uparrow\mu$. Hence it follows that $p_S =0$, which yields a contradiction. We conclude that Case~2 can not occur.

\textbf{Case 3:}
Assume that $X$ can not be written as a union of server sets of CRP components. From Observation~2 we deduce that 
\begin{equation}
\mu(X) - N\lambda p(\mathcal{U}_M(X)) \ge \mu(X) - N\lambda p(\mathcal{U}_{\tilde{M}}(X)).
\end{equation}
Now define $X_k \coloneqq X \cap \mathcal{Z}_k$ for $k=1,\dots,K$. Then, 
\begin{equation}
\mu(X) = \sum\limits_{k=1}^K \mu(X_k). 
\end{equation}
Since the CRP components are disconnected under $\tilde{M}$, we also have that
\begin{equation}
N\lambda p(\mathcal{U}_{\tilde{M}}(X)) = \sum\limits_{k=1}^K N\lambda p(\mathcal{U}_{\tilde{M}}(X_k)). 
\end{equation}
Since $X$ is not equal to the union of server sets of CRP components, there is a $\tilde{k}$ such that $\emptyset \neq X_{\tilde{k}} \subsetneq \mathcal{Z}_{\tilde{k}}$. From \cite[Lemma~3(ii)]{Afeche2021} it is known that $\mu(X_k) - N\lambda p(\mathcal{U}_{\tilde{M}}(X_k)) >0$.

Combining the above results with Observations~1 and 2 yields
\begin{equation}
\begin{array}{rcl}
\mu(X) - N\mu  p(\mathcal{T}) & \ge & \mu(X) - N\mu p(\mathcal{U}_M(X)) \\
& \ge & \mu(X) - N\mu p(\mathcal{U}_{\tilde{M}}(X)) \\
& = & \sum\limits_{k=1}^K \mu(X_k) - N\mu p(\mathcal{U}_{\tilde{M}}(X_k)) \\
& \ge & \mu(X_{\tilde{k}}) - N\mu p(\mathcal{U}_{\tilde{M}}(X_{\tilde{k}})) > 0.
\end{array}
\end{equation}
This contradicts the fact that $\mathcal{T}$ is a critical subset of job types, and we conclude that Case~3 cannot occur.

This concludes the proof of Lemma~\ref{lem:construction_2}.

\end{proof}

\section{Main results - additional comments}




\subsection{Comment on the used notation in Subsection~\ref{subsec:proof_part2}} \label{app:assumption_lam_star_mu}

In the notation in the proofs of Subsection~\ref{subsec:proof_part2} it is assumed that all job types belong to (at least) one of the critical subsets and hence the whole system experiences criticality, so that $\lambda^* = \mu$. 
However, if there had been job types that do not experience criticality, these job types could occur in the $K$-critical vectors. In particular, these vectors would be of the form $\boldsymbol{T} = \left[
\boldsymbol{C}_{\sigma(1)},\dots,\boldsymbol{C}_{\sigma(K)}, \boldsymbol{NC}
\right]$,
for some $\sigma\in\Sigma_K$ where $\boldsymbol{C}_{\sigma(k)}$ denotes a permutation of the job types in the CRP component $\mathbb{C}_{\sigma(k)}$ for all $k$, and $\boldsymbol{NC}$ a permutation of (a subset of) the job types that do not experience criticality. Notice that these non-critical job types must always occur at the end of the critical vector, otherwise the $K$-criticality of the vector would be violated. Moreover, for any topological ordering $\sigma\in\Sigma_K$, the same collection of permutations of non-critical subsets would occur. This would yield an adaptation of the definition of the $\sigma$-critical vectors (Definition~\ref{def:sigma_critcal_vectors}), and a few additional steps in the proof of Proposition~\ref{prop:prob_t_sigma}. Especially in the last step (see~\eqref{eq:last_step_P_star_T_sigma}) one needs to use the above observation that the non-critical job types will contribute to each $\mathbb{P}^*(\boldsymbol{T}^{\sigma})$ equally much, such that the normalization constant $\alpha'''$ can be adapted and the same expression as in Proposition~\ref{prop:prob_t_sigma} would be obtained. This small technicality would have made the notation in the proofs of the above intermediate results even more cumbersome.

Moreover, we would like to emphasize that the main results still hold if not all job types experience criticality, even using the notation as in Subsection~\ref{subsec:proof_part2}. The job types that will not experience criticality will not occur in the DAG either, so that eventually their (scaled) number of jobs will vanish in a heavy-traffic regime.

\subsection{Comments on Assumption~\ref{cond:root}} \label{app:root_partition}
In this subsection we provide some examples of systems that satisfy the root partitioning assumption as formulated in Assumption~\ref{cond:root}. We emphasize that the list of examples is not exhaustive, and only serves to illustrate the wide range of models for which Assumption~\ref{cond:root} holds.\\

All systems for which the model parameters satisfy the \emph{CRP condition} yield a DAG that consists of a single node (Condition~\ref{cond:crp}) for which Assumption~\ref{cond:root} trivially holds.

 Assumption~\ref{cond:root} also holds for systems with a \emph{unique topological ordering}. By contradiction, it can be argued that if a node belongs to the subgraphs induced by two root nodes $r_1$ and $r_2$, then there exists a topological ordering where $r_1$ is positioned before $r_2$ and vice versa.
Alternatively, we can observe that all critical vectors yield the same expression for the corresponding mixture components when there is a single topological ordering. Hence, $(X_S)_{S\in\mathcal{S}}$ and $(Y_S)_{S\in\mathcal{S}}$ have the same distribution, and the statement of Theorem~\ref{th:general_results} follows directly from Theorem~\ref{cor:mixture_dist}.
(A more formal argument can be found in the proof of Lemma~\ref{lem:mixture_components}.)

\emph{Nested systems} are systems where the compatibility graph satisfies the following property: let $S_1$ and $S_2$ be two distinct job types, if they have a non-empty intersection of compatible servers, i.e., $S_1\cap S_2 \neq \emptyset$, then one of them must be contained in the other, i.e., $S_1\subsetneq S_2$ or $S_2\subsetneq S_1$ \cite{Gardner2017scheduling,Gardner2020}. By contradiction, it can again be argued that if a node belongs to the subgraphs induced by two root nodes $r_1$ and $r_2$, then there exists a node $v$ where the subgraphs meet for the first time and nodes $v_1$ and $v_2$ on the path from $r_1$ and $r_2$ respectively to node $v$ such that there is one job type in node $v_1$ and one job type in $v_2$ for which the nested property is violated. 

As the system in Example~\ref{ex:example_main_result} (Figure~\ref{fig:general_result}) illustrates, Assumption~\ref{cond:root} can be satisfied for systems that are neither nested, nor have a unique topological ordering. 
In particular, for arbitrarily structured compatibility graphs, whether or not Assumption~\ref{cond:root} is satisfied depends on the delicate interplay between the compatibility constraints and the model parameters. To illustrate this, consider a system with three servers, each operating at speed $\mu$ and define the job types as $\mathcal{S} = \{\{1,2\},\{2,3\},\{2\}\}$. 
\begin{itemize}
\item[-] Let $\lambda_S \equiv \lambda$ for all $S\in\mathcal{S}$ and some $\lambda<\mu$. Then the critical components are given by $\mathbb{C}_1 = (\{1,2\},\{1\})$, $\mathbb{C}_2 = (\{2,3\},\{3\})$ and $\mathbb{C}_3 = (\{2\},\{2\})$ as $\lambda\uparrow\mu$, and the DAG consists of two edges, namely $(\mathbb{C}_1,\mathbb{C}_3)$ and $(\mathbb{C}_2,\mathbb{C}_3)$. This DAG clearly cannot be partitioned according to its root nodes, so Assumption~\ref{cond:root} is not satisfied.
\item[-] Let $(\lambda_{\{1,2\}},\lambda_{\{2,3\}},\lambda_{\{2\}}) = \lambda(\frac{4}{3},1,\frac{2}{3})$ for some $\lambda<\mu$. Then the critical components are given by $\mathbb{C}_1 = (\{\{1,2\},\{2\}\},\{1,2\})$ and $\mathbb{C}_2 = (\{2,3\},\{3\})$ as $\lambda\uparrow\mu$, and the DAG consists of a single edge, namely $(\mathbb{C}_2,\mathbb{C}_1)$. This DAG can clearly be partitioned according to its root node, so Assumption~\ref{cond:root} is satisfied.
\end{itemize}
Hence, even if Assumption~\ref{cond:root} is not satisfied for a particular set of model parameters for a given system, it might be for other values of model parameters.

\section{Main results - generalization}\label{subsec:extension}

To derive the main result in Theorem~\ref{th:general_results}, we assumed that $(\lambda_S)_{S\in\mathcal{S}} = N\lambda (p_S)_{S\in\mathcal{S}}$, and in particular that the fractions of jobs of each type remain fixed as the system approaches its heavy-traffic limit, i.e., when $\lambda\uparrow\lambda^*$. 
Similar to the heavy-traffic analysis conducted by Hillas \emph{et al.}~\cite{Hillas2023}, we can relax his assumption by selecting any positive constants $(\gamma_S)_{S\in\mathcal{S}}$ and set
\begin{equation}\label{eq:HT_trajectory}
\lambda_S^{\epsilon} = N\lambda^*p_S - \epsilon\gamma_S,
\end{equation}
for all $S\in\mathcal{S}$ and $0\le \epsilon \le \epsilon^{+}$ for some $\epsilon^{+}$ small enough such that all the arrival rates are positive. The heavy-traffic behavior of the system can then be studied by taking $\epsilon\downarrow 0$. 
To ease the notation, we will omit the $\epsilon$ dependence of $\lambda_S$. 

Notice that this definition coincides with the setting in the previous subsections when we set $\gamma_S = N\lambda^*p_S$, where $\epsilon = 1-\frac{\lambda}{\lambda^*}$. In terms of the stability region, this allows us to approach any point on the boundary from any interior point, while the previous set-up only allowed the boundary to be approached in a direction starting from the origin. 

\begin{example} \label{example:N_model_gen_conv}
Consider the N-model as depicted in Figure~\ref{fig:Nmodel}. Assuming that $(p_{\{1,2\}},p_{\{2\}}) = (\mu_1,\mu_2)/(\mu_1+\mu_2)$, the critical arrival rate vector is then given by $(N\lambda^*p_{\{1,2\}},N\lambda^*p_{\{2\}}) = (\mu_1,\mu_2)$. Let $ (\gamma_{\{1,2\}},\gamma_{\{2\}})  = (\mu_1 +\delta,\mu_2 - \delta)$ for some $\delta \ge 0$. Hence, $(\lambda_{\{1,2\}}^{\epsilon},\lambda_{\{2\}}^{\epsilon}) =  \left((1-\epsilon)\mu_1 - \epsilon\delta,(1-\epsilon)\mu_2 + \epsilon\delta\right)$ for $\epsilon\downarrow 0$. Notice that the setting with $\delta = 0$, as depicted by a dotted line in Figure~\ref{fig:stability_Nmodel_generalization}, fits the framework of the previous subsections. The result in this subsection also subsumes the settings for any $\delta> 0$, depicted by a dashed line in Figure~\ref{fig:stability_Nmodel_generalization}. Observe that the heavy-traffic trajectories of the latter cases are closer to the boundary of the stability region induced by the constraint $\lambda_{\{2\}} < \mu_2$ compared to the trajectory with $\delta = 0$.
\end{example}

\begin{figure}[h]
\centering
\includegraphics[scale=1]{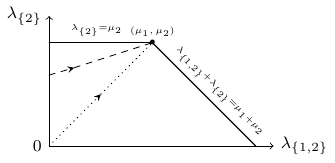}
\caption{Visual representation of the stability region of the N-model (Figure~\ref{fig:Nmodel}) with different directions of convergence of the arrival rate vectors $(\lambda_{\{1,2\}}^{\epsilon},\lambda_{\{2\}}^{\epsilon})$ to the critical arrival rate vector $(N\lambda^*p_{\{1,2\}},N\lambda^*p_{\{2\}}) = (\mu_1,\mu_2)$. The heavy-traffic trajectories are induced by $(\gamma_{\{1,2\}},\gamma_{\{2\}}) = (\mu_1 + \delta,\mu_2 - \delta)$, and for some $\delta > 0$  depicted by a dashed line while for $\delta = 0$ depicted by a dotted line. }
\label{fig:stability_Nmodel_generalization}
\end{figure}

Using this general direction of convergence of the arrival rate vector, we can prove a generalization of Theorem~\ref{th:general_results}. Similarly to~\eqref{eq:abbreviated}, we first define
 \begin{equation}
 \gamma(\boldsymbol{T},j) \coloneqq \sum\limits_{i=1}^j \gamma_{T_i}
 \end{equation}
 for any ordered vector of job types~$\boldsymbol{T}$ and $j=1,\dots, |\boldsymbol{T}|$.
 Moreover, let $\gamma(V_k)$ represent the sum of those elements of $(\gamma_S)_{S\in\mathcal{S}}$ such that the corresponding job types occur in the nodes of the subgraph rooted at critical component $\mathbb{C}_k$, denoted by $V_k$.
Then, we define a slightly altered version of the (deterministic) coefficients in~\eqref{eq:prob_type_rooted_tree}, i.e.,
\begin{equation}
\label{eq:prob_type_rooted_tree_generalized}
\hat{\gamma}_{k,S} \coloneqq \frac{N\lambda^*p_S}{\gamma(V_k)} \mathds{1}\{S\in V_k\}
\end{equation}
for all $k=1,\dots,K$ and $S\in\mathcal{S}$.

\begin{theorem}[Main result - generalization]
\label{th:general_results_extension}
If Assumption~\ref{cond:root} holds, then the vector of \textup{(}scaled\textup{)} queue lengths, $\epsilon(Q_S)_{S\in\mathcal{S}}$, converges in distribution to 
$(Y_S)_{S\in\mathcal{S}}$, i.e.,
\begin{equation}
\epsilon \left( Q_S\right)_{S\in\mathcal{S}}
\overset{d}{\rightarrow}
\left( Y_S\right)_{S\in\mathcal{S}},
\end{equation}
 when $\epsilon\downarrow 0$, with
\begin{equation}\label{eq:general_result_RV_generalization}
\left( Y_S\right)_{S\in\mathcal{S}}
\overset{d}{=}
\left(\sum\limits_{k=1}^K \hat{\gamma}_{k,S}   U_k \right)_{S\in\mathcal{S}},
\end{equation}
$\hat{\gamma}_{k,S}$ as defined in~\eqref{eq:prob_type_rooted_tree_generalized} and
$U_1,\dots,U_K$ independent and exponentially distributed random variables with unit mean. The result holds for both the c.o.c.\ and c.o.s.\ mechanism.
\end{theorem}

If the CRP condition (Condition~\ref{cond:crp}) is satisfied, then the associated DAG consists of a single node with all critical job types. Denote this subset of critical job types by $\mathcal{T}^*$, then the scaled numbers of jobs of the various types converge to
\begin{equation}
\left( U \frac{N\lambda^* }{\gamma(\mathcal{T}^*)}\left(p_S \right)_{S\in\mathcal{T}^*}, (0)_{S\notin\mathcal{T}^*}\right)
\end{equation}
as $\epsilon\downarrow 0$ with $U$ a unit mean exponential random variable. From this we can deduce that, as long as the total speed $\gamma(\mathcal{T}^*)$ at which a point of the boundary of the stability region is approached is unaltered, the direction from which this point is approached plays no role when the CRP condition is satisfied.

This observation is no longer true if the CRP condition is not satisfied, as illustrated in the following example.

\begin{example}
Consider the N-model with the heavy-traffic trajectories as in Example~\ref{example:N_model_gen_conv}. Applying the above theorem yields
\begin{equation}
\begin{array}{rcl}
\left( Y_{\{1,2\}},Y_{\{2\}}\right) & \overset{d}{=} &
\left( \frac{N\lambda^*p_{\{1,2\}}}{\gamma(\mathcal{S})}U_1, 
\frac{N\lambda^*p_{\{2\}}}{\gamma(\mathcal{S})}U_1
+ \frac{N\lambda^*p_{\{2\}}}{\gamma(\{2\})}U_2 \right) \\
& = & \left( \frac{\mu_1}{\mu_1+\mu_2}U_1, \frac{\mu_2}{\mu_1+\mu_2}U_1 + \frac{\mu_2}{\mu_2-\delta}U_2\right). 
\end{array}
\end{equation}
We observe that the number of type-$\{2\}$ jobs is larger when $\delta >0$ while the number of type-$\{1,2\}$ jobs remains the same compared to a setting with $\delta = 0$. 
\end{example}

We now consider a slightly larger example to illustrate the impact of $(\gamma_S)_{S\in\mathcal{S}}$.

\begin{example}\label{exam:extN}
Consider an extended version of the N-model as depicted in Figure~\ref{fig:vb_extN} with $N=4$ servers and assume that 
$
(p_{\{1,2\}},p_{\{2,3\}},p_{\{3,4\}},p_{\{4\}}) = (\mu_1, \mu_2, \mu_3, \mu_4 ) / (N\mu).
$
The critical arrival rate vector is then given by 
$$
N\lambda^*(p_{\{1,2\}},p_{\{2,3\}},p_{\{3,4\}},p_{\{4\}}) = (\mu_1, \mu_2, \mu_3, \mu_4 ),
$$
with 
$
\mathcal{CR}(\mathcal{S}) = \left\{
\left\{4\right\}
\left\{\{3,4\},\{4\}\right\}
\left\{\{2,3\},\{3,4\},\{4\}\right\}
\left\{\mathcal{S}\right\}
\right\}
$
being the collection of all critical subsets of job types. The associated DAG is a line graph with four CRP components, $\mathbb{C}_1 = (\{1,2\},\{1\})$, $\mathbb{C}_2 = (\{2,3\},\{2\})$, $\mathbb{C}_3 = (\{3,4\},\{3\})$ and $\mathbb{C}_4 = (\{4\},\{4\})$. Under the conditions of Section~\ref{sec:main_results} \textup{(}Theorem~\ref{th:general_results}\textup{)}, we observe the following convergence result
\begin{equation}
\left(1-\frac{\lambda}{\lambda^*}\right)\left[
\begin{array}{c}
Q_{\{1,2\}}\\
Q_{\{2,3\}}\\
Q_{\{3,4\}}\\
Q_{\{4\}}
\end{array}
\right]
\overset{d}{\rightarrow}
\left[
\begin{array}{l}
p_{\{1,2\}}U_1\\
p_{\{2,3\}}U_1 + \frac{p_{\{2,3\}}}{p_{234}}U_2\\
p_{\{3,4\}}U_1 + \frac{p_{\{3,4\}}}{p_{234}}U_2 + \frac{p_{\{3,4\}}}{p_{34}}U_3\\
p_{\{4\}}U_1 + \frac{p_{\{4\}}}{p_{234}}U_2  + \frac{p_{\{4\}}}{p_{34}}U_3 + U_4
\end{array}
\right]
\end{equation}
as $\lambda\uparrow\mu$ where we used the abbreviations $p_{234} \coloneqq p_{\{2,3\}}+p_{\{3,4\}}+p_{\{4\}}$ and $p_{34} \coloneqq p_{\{3,4\}}+p_{\{4\}}$. Consider some vector $(\gamma_S)_{S\in\mathcal{S}}$ such that $\sum_{S\in\mathcal{S}} \gamma_S= N\lambda^* = N\mu$. Appying Theorem~\ref{th:general_results_extension}, then yields
\begin{equation}
\epsilon\left[
\begin{array}{c}
Q_{\{1,2\}}\\
Q_{\{2,3\}}\\
Q_{\{3,4\}}\\
Q_{\{4\}}
\end{array}
\right]
\overset{d}{\rightarrow}
\left[
\begin{array}{l}
p_{\{1,2\}}U_1\\
p_{\{2,3\}}U_1 + \frac{N\mu p_{\{2,3\}}}{\gamma_{234}}U_2\\
p_{\{3,4\}}U_1 + \frac{N\mu p_{\{3,4\}}}{\gamma_{234}}U_2 + \frac{N\mu p_{\{3,4\}}}{\gamma_{34}}U_3\\
p_{\{4\}}U_1 + \frac{N\mu p_{\{4\}}}{\gamma_{234}}U_2  + \frac{N\mu p_{\{4\}}}{\gamma_{34}}U_3 + \frac{N\mu p_{\{4\}}}{\gamma_{\{4\}}}U_4
\end{array}
\right]
\end{equation}
as $\epsilon\downarrow 0$ where we used the abbreviations $\gamma_{234} \coloneqq \gamma_{\{2,3\}}+\gamma_{\{3,4\}}+\gamma_{\{4\}}$ and $\gamma_{34} \coloneqq \gamma_{\{3,4\}}+\gamma_{\{4\}}$.
Hence the job types which belong to the CRP components further away from the root of the line graph are more affected by direction along which the boundary point of the stability region is approached. Roughly speaking the \emph{excess} jobs of a job type-$\{i,i+1\}$ can only be processed by server $i+1$ if server $i$ is too busy. This influences the allocation of type-$\{i+1,i+2\}$ jobs, which leads to a cascading event.

\end{example}

\begin{figure}[h]
\centering
\begin{subfigure}[b]{0.49\textwidth}
\centering
\includegraphics[scale=1]{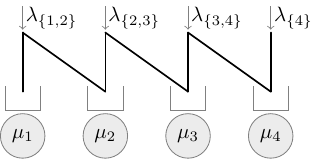}
\caption{Sketch of the system.}\label{fig:vb_extN}
\end{subfigure}
\begin{subfigure}[b]{0.49\textwidth}
\centering
\includegraphics[scale = 1]{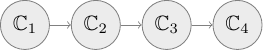}
\subcaption{Associated DAG.}\label{fig:DAG_extN}
\end{subfigure}
\caption{An illustrative example with model parameters and CRP components as defined in Example~\ref{exam:extN}.}\label{fig:extN}
\end{figure}

For larger examples, with more complicated DAG structures, it becomes less obvious how the choice of the parameters $(\gamma_S)_{S\in\mathcal{S}}$ influences the limiting representation of the vector of queue lengths compared to the settings discussed in Section~\ref{sec:main_results}.

The proof of Theorem~\ref{th:general_results_extension} for the c.o.c.\ mechanism follows a similar two-step framework as the proof of Theorem~\ref{th:general_results}.
Therefore we will only give a broad outline of the proof and mention some of the differences. The proof outline for the c.o.s. mechanism is provided in Appendix~\ref{app:main_results_cos}.\\

First, a convergence result similar to Proposition~\ref{th:limit_gf} must be established in terms of the PGF of the numbers of jobs of the various types. 
We define
\begin{equation}
\label{eq:omega_T}
\omega(\boldsymbol{T}) \coloneqq \beta(\boldsymbol{T}) \prod\limits_{j\in\text{CR}(\boldsymbol{T})} \frac{\mu(\boldsymbol{T},j)}{\gamma(\boldsymbol{T},j)}
\quad
\text{and} 
\quad
\omega(\mathcal{N}_K) \coloneqq \sum\limits_{\boldsymbol{T}\in\mathcal{N}_K} \omega(\boldsymbol{T})
\end{equation}
for any $\boldsymbol{T}\in\mathcal{N}_K$.

\begin{theorem}[Convergence of PGFs - generalization]\label{th:limit_gf_generalization}
With the definitions as in Section~\ref{sec:preliminaries} and the \textup{(}pre-limit\textup{)} arrival rates as in~\eqref{eq:HT_trajectory},
the joint PGF of the \textup{(}scaled\textup{)} numbers of jobs of the various types, i.e., $\mathbb{E}\left[\prod_{S\in\mathcal{S}} z_S^{Q_S}\right]$ with $z_S = \exp\left(-\epsilon t_S\right)$ converges to
\begin{equation}\label{eq:GF_gen}
\sum\limits_{\boldsymbol{T}\in\mathcal{N}_K} \frac{\omega(\boldsymbol{T})}{\omega(\mathcal{N}_K)}
\prod\limits_{i\in \text{CR}(\boldsymbol{T})}\left(1+ \sum_{j=1}^i t_{T_j}\frac{N\lambda^*p_{T_j}}{\gamma(\boldsymbol{T},i)}\right)^{-1}
\end{equation}
as $\epsilon\downarrow 0$ with $t_S \ge 0$ for all $S\in\mathcal{S}$.
\end{theorem}

The proof is similar to the proof of Proposition~\ref{th:limit_gf}, where we now use $\epsilon$ instead of $1-\frac{\lambda}{\lambda^*}$ and the fact that $\gamma(\boldsymbol{T},i)$ no longer has to be equal to $\mu(\boldsymbol{T},i) = N\lambda^* p (\boldsymbol{T},i)$ for $i\in\text{CR}(\boldsymbol{T})$.

Second, from the above theorem we can deduce that $(X_S)_{S\in\mathcal{S}}$ follows a mixture distribution where the mixture weights are determined by $\omega(\boldsymbol{T})/\omega(\mathcal{N}_K)$ for $\boldsymbol{T}\in\mathcal{N}_K$ and the mixture components consist of linear combinations of independent exponentially distributed random variables. 

Define the indices $i_1,\dots,i_K$ such that  $\{i_1,\dots,i_K\} = \mathrm{CR}(\boldsymbol{T})$ with $i_1<i_2<\dots<i_K$ and $j_{S}(\boldsymbol{T})$ the position of $S$ in $\boldsymbol{T}$ such that $T_{j_{S}(\boldsymbol{T})} = S$ for any $S\in\mathcal{S}$ and $\boldsymbol{T}\in\mathcal{N}_K$.
Let $\left(I_{\boldsymbol{T}}\right)_{\boldsymbol{T}\in\mathcal{N}_K}
= \boldsymbol{e}(\tilde{\boldsymbol{T}})$ with probability $\frac{\omega(\tilde{\boldsymbol{T}})}{\omega(\mathcal{N}_K)}$ for any $\tilde{\boldsymbol{T}}\in\mathcal{N}_K$ and define $\boldsymbol{e}(\tilde{\boldsymbol{T}})$ as a $|\mathcal{N}_K|$-dimensional unit vector with a one entry at the location corresponding to the ordered vector $\tilde{\boldsymbol{T}}$, $\boldsymbol{T}\in\mathcal{N}_K$. Then, the \textup{(}random\textup{)} coefficients are given by
\begin{equation}
\Gamma_{k,S} \coloneqq \sum\limits_{\boldsymbol{T}\in\mathcal{N}_K} I_{\boldsymbol{T}} \frac{N\lambda^*p_S}{\gamma(\boldsymbol{T},i_k)}\mathds{1}\{i_k \ge j_S(\boldsymbol{T})\},
\end{equation}
for any $S\in\mathcal{S}$ and $k=1,\dots,K$, such that
\begin{equation}\label{eq:X_s_Gen}
(X_S)_{S\in\mathcal{S}}
\overset{d}{=}
\left( \sum\limits_{k=1}^K \Gamma_{k,S} U_k 
\right)_{S\in\mathcal{S}}
\end{equation}
with $U_1,\dots,U_K$ independent and exponentially distributed random variables with unit mean.

\begin{proposition}[Convergence to a mixture distribution - generalization] \label{cor:mixture_dist_gen}
The \textup{(}scaled\textup{)} vector of queue lengths, $\epsilon(Q_S)_{S\in\mathcal{S}}$, converges in distribution to a random vector associated with the mixture distribution as specified above, i.e.,
\begin{equation}
\epsilon \left( Q_S \right)_{S\in\mathcal{S}}
\overset{d}{\rightarrow}
(X_S)_{S\in\mathcal{S}}.
\end{equation}
\end{proposition}

We emphasize that, even though the (pre-limit) arrival rates change, the critical arrival rate to the system, $\lambda^*$ will remain the same as in the previous sections. Hence, the sets of critical job types, $\mathcal{CR}(\mathcal{S})$, the CRP components and the associated DAG will be also be the same, even when $(\gamma_S)_{S\in\mathcal{S}}$ differs from $(N\lambda^* p_S)_{S\in\mathcal{S}}$.\\

After establishing the mixture distribution, we show that it (and its Laplace transform) can be rewritten or simplified, as formalized in the following proposition.

\begin{proposition}\label{prop:equality_GF_generalization}
For $t_S\ge 0$ for all $S\in\mathcal{S}$ and under Assumption~\ref{cond:root}, it holds that
\begin{equation}
\mathbb{E}\left[ \prod\limits_{S\in\mathcal{S}} \mathrm{e} ^{-t_S X_S}\right] = \mathbb{E}\left[ \prod\limits_{S\in\mathcal{S}} \mathrm{e} ^{-t_S Y_S}\right],
\end{equation}
with 
the former Laplace transform as in~\eqref{eq:GF_gen} and 
\begin{equation}\label{eq:equality_GF_generalization}
\mathbb{E}\left[ \prod\limits_{S\in\mathcal{S}} \mathrm{e} ^{-t_S Y_S}\right] 
= \prod\limits_{k=1}^K \left( 1+ \sum\limits_{S\in V_k} t_S \frac{N\lambda^*p_S}{\gamma(V_k)}\right)^{-1},
\end{equation}
such that 
\begin{equation}
\left(X_S \right)_{S\in\mathcal{S}} \overset{d}{=} \left(Y_S \right)_{S\in\mathcal{S}}.
\end{equation}
\end{proposition}

Lemmas~\ref{lem:N_K_versus_Sigma} and~\ref{lem:mixture_components} are still applicable without any alterations and will be the building blocks of the proof. However, the expression for the aggregation of the mixture weights according to the $\sigma$-ordered  vectors, $\boldsymbol{T}^{\sigma}$, does depend on $(\gamma_S)_{S\in\mathcal{S}}$. Indeed,
\begin{equation}
\sum\limits_{\boldsymbol{T}\in \boldsymbol{T}^{\sigma}} \frac{\omega(\boldsymbol{T})}{\omega(\mathcal{N}_K)}
=
\prod_{k=1}^K
\frac{\gamma(V_k)}{\gamma(\mathcal{C}_{\sigma(1)},\dots,\mathcal{C}_{\sigma(k)})}
\end{equation}
for any $\sigma\in\Sigma_K$, which serves as an alternative result for Proposition~\ref{prop:prob_t_sigma}. This, together with the proof outline of Proposition~\ref{prop:equality_GF}, is sufficient to establish~\eqref{eq:equality_GF_generalization}.

\section{Main results - redundancy c.o.s.}\label{app:main_results_cos}

Since the result in Theorem~\ref{th:general_results} is a special case of the result in Theorem~\ref{th:general_results_extension}, we will only focus on the latter to show its validity in case of the c.o.s.\ mechanism. The proof follows the same outline as the proof for the c.o.c.\ mechanism. 
The main difference is the limiting form of the PGF of the numbers of waiting jobs of the various types, which will be provided below for completeness.

Let $\boldsymbol{u}$ be an ordered vector of a subset of the servers which represents the idle servers and the order in which they became idle, then 
\begin{equation}\label{eq:alpha}
\alpha(\boldsymbol{u})\coloneqq \prod\limits_{l=1}^{|\boldsymbol{u}|} \mu_{u_l}\left(N\lambda^* \sum\limits_{S\colon S\cap\{u_1,\dots,u_l\}\neq \emptyset} p_S \right)^{-1}.
\end{equation}
The set of all ordered idle servers with no compatibilities among the job types in $\boldsymbol{T}$ is denoted by $\mathcal{E}(\boldsymbol{T})$.

\begin{theorem}[Convergence of PGFs - c.o.s., generalization] \label{th:limit_gf_cos_gen_HT}
With the definitions as in Section~\ref{sec:preliminaries} and the \textup{(}pre-limit\textup{)} arrival rates as in~\eqref{eq:HT_trajectory}, the joint PGF of the \textup{(}scaled\textup{)} numbers of \emph{waiting} jobs of the various types, i.e., $\mathbb{E} \left[ \prod_{S\in\mathcal{S}} z_S^{\tilde{Q}_S}\right]$ with $z_S = \exp \left(-\epsilon t_S\right)$ converges to
\begin{equation}\label{eq:gf_cos_gen}
\sum\limits_{\boldsymbol{T}\in\mathcal{N}_K}  
\sum\limits_{\boldsymbol{u}\in\mathcal{E}(\boldsymbol{T})}
 \frac{\alpha(\boldsymbol{u})
  \omega(\boldsymbol{T})}{\hat{\omega}}
 \prod\limits_{i\in\text{CR}(\boldsymbol{T})}
 \left(1+\sum\limits_{j=1}^i t_{T_j}\frac{N\lambda^* p_{T_j}}{\gamma(T_1,\dots,T_i)}
 \right)^{-1}
\end{equation}
as $\lambda\uparrow\mu$ with $t_S\ge 0$ for all $S\in\mathcal{S}$ where $\mathcal{E}(\boldsymbol{T})$ represents the collection of vectors of servers which are not compatible to any of the job types in~$\boldsymbol{T}$.
We define $\omega(\boldsymbol{T})$ and $\alpha(\boldsymbol{u})$ as in~\eqref{eq:omega_T} and~\eqref{eq:alpha}, respectively,
and
\begin{equation}
\hat{\omega} = 
\sum\limits_{\boldsymbol{T}\in\mathcal{N}_K}  
\sum\limits_{\boldsymbol{u}\in\mathcal{E}(\boldsymbol{T})}
 \alpha(\boldsymbol{u})
 \omega(\boldsymbol{T}).
\end{equation}
\end{theorem}

The proof uses similar arguments as the proof of Proposition~\ref{th:limit_gf}.

\section{Alternative proofs of the main results}\label{sec:alternative proof}
\subsection{Pre-limit characterization of the system} \label{sec:interpretation}
Before turning our attention to the heavy-traffic limit, we will investigate  in this section the pre-limit stationary behavior of the system. Specifically, we will derive a characterization of the total number of jobs of the various types in terms of weighted sums of geometrically distributed random variables whose parameters depend on the model parameters and the order in which the different job types occur. 
It is worth emphasizing that simple, closed-form expressions for the stationary distribution at the level of the number of jobs (of the various types) still seem out of reach, despite this characterization.

We introduce the following two concepts to keep track of the number of jobs (of the various types) in the system given an ordered vector~$\boldsymbol{T}$.
\begin{definition}[Jobs and segments]
Given an ordered vector of job types~$\boldsymbol{T}$, define $Q^j\mid\boldsymbol{T}$ as the number of jobs between the first occurrences of a type-$T_j$ and a type-$T_{j+1}$ job, for $j=1,\dots,|\boldsymbol{T}|-1$.
Analogously, $Q^{|\boldsymbol{T}|}\mid\boldsymbol{T}$ denotes the number of jobs that arrived after the first $T_{|\boldsymbol{T}|}$-type of job.
We refer to $Q^j\mid\boldsymbol{T}$ as the numbers of jobs in the $j$th segment given $\boldsymbol{T}$. Moreover, define $Q^{j}_{T_i}\mid  \boldsymbol{T}$ as the number of type-$T_i$ jobs in the $j$th segment, for $i=1,\dots,j$ and $j=1,\dots,|\boldsymbol{T}|$.
\end{definition}

\begin{example}  \label{example:pre_limit_def}
Consider the system in Example~\ref{ex:example_main_result} and let us illustrate the above definition by fixing the ordered vector $\boldsymbol{T} = [T_1,T_2,T_3,T_4] = [ \{1\},\{3\},\{3,4\},\{1,2,3\}]$. The state of the system could be given by
\begin{equation}
\boldsymbol{c} = (T_1,\underbrace{T_1}_{\boldsymbol{R}^1},T_2,
T_3, \underbrace{T_1,T_2,T_2,T_3,T_1}_{\boldsymbol{R}^3},
T_4, \underbrace{T_1,T_4}_{\boldsymbol{R}^4}),
\end{equation}
where the type labels are ordered such that the first label corresponds to the oldest job in the system, etc. There are four segments of which the second is of size zero and the other three are denoted by $\boldsymbol{R}^1$, $\boldsymbol{R}^3$ and $\boldsymbol{R}^4$. So the numbers of jobs in the various segments are given by
$\left(Q^1,Q^2,Q^3,Q^4 \mid \boldsymbol{T},\boldsymbol{c} \right) = (|\boldsymbol{R}^1|,0,|\boldsymbol{R}^3|,|\boldsymbol{R}^4|) = (1,0,5,2).$
Considering each segment separately, the numbers of jobs of the various types are given by
\begin{equation}
\begin{array}{rcl}

(Q^1_{T_1} \mid \boldsymbol{T},\boldsymbol{c} ) & = & (1)\\
(Q^2_{T_1}, Q^2_{T_2} \mid \boldsymbol{T},\boldsymbol{c}) & = & (0,0)\\
(Q^3_{T_1}, Q^3_{T_2}, Q^3_{T_3} \mid \boldsymbol{T},\boldsymbol{c}) & = & (2,2,1)\\
(Q^4_{T_1}, Q^4_{T_2}, Q^4_{T_3},Q^4_{T_4} \mid \boldsymbol{T},\boldsymbol{c}) & = & (1,0,0,1).
\end{array}
\end{equation}
\end{example}

\begin{theorem}\label{th:prelimit_distributions}
For a given $\boldsymbol{T}\in\mathcal{N}$, the following \textup{(}pre-limit\textup{)} characterizations hold.
\begin{enumerate}[label=\alph*.]
\item Job types in a segment: 
$Q^j_{T_i}\mid\boldsymbol{T}$ is geometrically distributed with parameter
\begin{equation}\label{eq:param_geometric_type}
p^{\boldsymbol{T},j,i} \coloneqq 
\frac{\frac{N\lambda p_{T_i}}{\mu(\boldsymbol{T},j)}}{1-\frac{N\lambda p(\boldsymbol{T},j)}{\mu(\boldsymbol{T},j)} + \frac{N\lambda p_{T_i}}{\mu(\boldsymbol{T},j)}},
\end{equation}
for all $i=1,\dots,j$ and $j=1,\dots,|\boldsymbol{T}|$.
\item Jobs in a segment: $Q^j\mid\boldsymbol{T}$ is geometrically distributed with parameter
\begin{equation}\label{eq:param_geometric}
p^{\boldsymbol{T},j} \coloneqq \frac{N\lambda p(\boldsymbol{T},j)}{\mu(\boldsymbol{T},j)},
\end{equation}
for all $j=1,\dots,|\boldsymbol{T}|$. 
\item State configurations: $\mathbb{P}(\boldsymbol{T}) = C\cdot h(\boldsymbol{T},\boldsymbol{1})$ with normalization constant $C^{-1} = \sum_{\boldsymbol{\tilde{T}}\in\mathcal{N}} h(\boldsymbol{\tilde{T}},\boldsymbol{1})$ and $h(\boldsymbol{T},\boldsymbol{1})$ as in~\eqref{eq:def_h_T_z}.
\item The numbers of jobs in different segments are independent.
\item The numbers of jobs of the various types in a particular segment given $\boldsymbol{T}$ and the total number of jobs in that segment, i.e., $(Q^j_{T_i} \mid Q^j, \boldsymbol{T})_{i=1,\dots,j}$, follows a multinomial distribution with parameters $Q^j$ and $(p_{T_i} / p(\boldsymbol{T},j))_{i=1,\dots,j}$ for all $j=1,\dots,|\boldsymbol{T}|$.
\end{enumerate}
\end{theorem}

Note that $p^{\boldsymbol{T},j},p^{\boldsymbol{T},j,i} \in(0,1)$ for all $i=1,\dots,j$, $j=1\dots,|\boldsymbol{T}|$ and $\boldsymbol{T}\in\mathcal{N}$ due to the stability conditions (Condition~\ref{cond:stab}). The proof of Theorem~\ref{th:prelimit_distributions} relies on the detailed stationary distribution provided in~\citep{Gardner2016queueing} and a sequential aggregation of the states.

\begin{proof}[Proof of Theorem~\ref{th:prelimit_distributions}]
To prove the statements in Theorem~\ref{th:prelimit_distributions} we start from the detailed stationary distribution provided in~\citep{Gardner2016queueing}
and sequentially aggregate the states. The product-form expression is given by
\begin{equation} \label{eq:PF_coc}
\pi (\boldsymbol{c}) = C\prod\limits_{i=1}^{|\boldsymbol{c}|} \frac{N\lambda p_{c_i}}{\mu(c_1,\dots,c_i)},
\end{equation}
with $\boldsymbol{c} = (c_1,\dots,c_n)$ representing the state of the system. 
Note that for a given ordered vector of job types~$\boldsymbol{T}$, a state $\boldsymbol{c}$ satisfying this order can be rewritten as
\begin{equation} \label{eq:enumeration_states}
\boldsymbol{c} =  (T_1,\boldsymbol{R}^1,T_2,\boldsymbol{R}^2,\dots,T_{|\boldsymbol{T}|},\boldsymbol{R}^{|\boldsymbol{T}|}), 
\end{equation}
where $ \boldsymbol{R}^j = (R^j_i,\dots,R_{n^j}^{j})$ with $R_n^j \in\{T_1,\dots,T_j\}$ for all $n\le n^j$, $n^j\in\mathbb{N}$ and $j\le |\boldsymbol{T}|$. Hence, we can also write
\begin{equation}
\pi (\boldsymbol{c}) = C\prod\limits_{j=1}^{|\boldsymbol{T}|} \frac{N\lambda p_{T_j}}{\mu(\boldsymbol{T},j)}  \prod\limits_{i=1}^{|\boldsymbol{R}^j|} \frac{N\lambda p_{R^j_i}}{\mu(\boldsymbol{T},j)},
\end{equation}
where we subdivide the factors of each job type in~\eqref{eq:PF_coc} into different categories, where either the corresponding job type label occurs for the first time or the corresponding job is part of the $j$th segment for some~$j$.

Consider a (non-empty) subset $\mathcal{J}$ of the indices $\{1,\dots,|\boldsymbol{T}|\}$.  Let $\boldsymbol{n}^j\in\mathbb{N}^j$ for all $j\in\mathcal{J}$. To obtain an expression for the joint stationary probability that $\left( \left( Q^j_{T_i} \mid \boldsymbol{T}\right)_{i=1}^j\right)_{j\in\mathcal{J}} =  \left( \left( n^j_i \right)_{i=1}^j\right)_{j\in\mathcal{J}}$, we have to aggregate over all states with
$\boldsymbol{R}^j$ such that $\sum_{k=1}^{|\boldsymbol{R}^j|} \mathds{1}\{R^j_k =  T_i\} = n^j_i$ for all $i=1,\dots,j$ and $j\in\mathcal{J}$ and $\boldsymbol{R}^j$ can be anything for all $j\notin\mathcal{J}$, which is a shorthand notation for the indices $j\in \{1,\dots,|\boldsymbol{T}|\}\setminus \mathcal{J}$. Moreover, we define $|\boldsymbol{n}^j|\coloneqq n^j_1+n^j_2+\dots + n^j_j$.

Hence,
\begin{equation}\label{eq:prelimit_full}
\begin{array}{rl}
& \mathbb{P} \left( \left( \left( Q^j_{T_i} \right)_{i=1,\dots,j}\right)_{j\in\mathcal{J}} =  \left( \left( n^j_i \right)_{i=1,\dots,j}\right)_{j\in\mathcal{J}} , \boldsymbol{T}  \right)\\

= & C \prod\limits_{j=1}^{|\boldsymbol{T}|} \frac{N\lambda p_{T_j}}{\mu(\boldsymbol{T},j)}
\prod\limits_{j\in\mathcal{J}}
\left[ \binom{|\boldsymbol{n}^j|}{n^j_1,\dots,n^j_j}  \prod\limits_{i=1}^j \left(\frac{N\lambda p_{T_i}}{\mu(\boldsymbol{T},j)}\right)^{n^j_i} \right]

\prod\limits_{j\notin\mathcal{J}}
\sum\limits_{n^j = 0}^{\infty} 
\left[ \sum\limits_{n^j_1+\dots+n^j_j= n^j}\binom{n^j}{n^j_1,\dots,n^j_j}  \prod\limits_{i=1}^j \left(\frac{N\lambda p_{T_i}}{\mu(\boldsymbol{T},j)}\right)^{n^j_i} \right] \\

= & C \prod\limits_{j=1}^{|\boldsymbol{T}|} \frac{N\lambda p_{T_j}}{\mu(\boldsymbol{T},j)}
\prod\limits_{j\in\mathcal{J}}
\left[ \binom{|\boldsymbol{n}^j|}{n^j_1,\dots,n^j_j}  \prod\limits_{i=1}^j \left(\frac{N\lambda p_{T_i}}{\mu(\boldsymbol{T},j)}\right)^{n^j_i} \right]
 \prod\limits_{j\notin\mathcal{J}}
\sum\limits_{n^j = 0}^{\infty} 
 \left(\frac{N\lambda p(\boldsymbol{T},j)}{\mu(\boldsymbol{T},j)}\right)^{n^j}\\

= & C \prod\limits_{j=1}^{|\boldsymbol{T}|} \frac{N\lambda p_{T_j}}{\mu(\boldsymbol{T},j)}
\prod\limits_{j\in\mathcal{J}}
\left[ \binom{|\boldsymbol{n}^j|}{n^j_1,\dots,n^j_j}  \prod\limits_{i=1}^j \left(\frac{N\lambda p_{T_i}}{\mu(\boldsymbol{T},j)}\right)^{n^j_i} \right]
 \prod\limits_{j\notin\mathcal{J}}
  \left(1-\frac{N\lambda p(\boldsymbol{T},j)}{\mu(\boldsymbol{T},j)}\right)^{-1}  .
\end{array}
\end{equation}

Let $n^j\in\mathbb{N}$ for $j\in\mathcal{J}$. In order to obtain the (joint) stationary probability that
$\left( Q^j\mid \boldsymbol{T} \right)_{j\in\mathcal{J}} =\left( n^j \right)_{j\in\mathcal{J}}$, we aggregate the above states further. In particular, we sum over all vectors $\boldsymbol{n}^j$ such that $|\boldsymbol{n}^j| = n^j$ to obtain
\begin{equation}\label{eq:prelimit_segment}
\begin{array}{rl}
& \mathbb{P} \left( \left( Q^j \right)_{j\in\mathcal{J}} =  \left(  n^j \right)_{j\in\mathcal{J}} , \boldsymbol{T}  \right)\\

= & C \prod\limits_{j=1}^{|\boldsymbol{T}|} \frac{N\lambda p_{T_j}}{\mu(\boldsymbol{T},j)}
\prod\limits_{j\in\mathcal{J}}
\sum\limits_{\substack{\boldsymbol{n}^j\in\mathbb{N}^j\colon\\ |\boldsymbol{n}^j| = n^j}}
\left[ \binom{|\boldsymbol{n}^j|}{n^j_1,\dots,n^j_j}  \prod\limits_{i=1}^j \left(\frac{N\lambda p_{T_i}}{\mu(\boldsymbol{T},j)}\right)^{n^j_i} \right]
 \prod\limits_{j\notin\mathcal{J}}
  \left(1-\frac{N\lambda p(\boldsymbol{T},j)}{\mu(\boldsymbol{T},j)}\right)^{-1} \\
  
  = & C \prod\limits_{j=1}^{|\boldsymbol{T}|} \frac{N\lambda p_{T_j}}{\mu(\boldsymbol{T},j)}
\prod\limits_{j\in\mathcal{J}}
\left(  \sum\limits_{i=1}^j \frac{N\lambda p_{T_i}}{\mu(\boldsymbol{T},j)}\right)^{n^j}
 \prod\limits_{j\notin\mathcal{J}}
  \left(1-\frac{N\lambda p(\boldsymbol{T},j)}{\mu(\boldsymbol{T},j)}\right)^{-1} \\
  
= & C \prod\limits_{j=1}^{|\boldsymbol{T}|} \frac{N\lambda p_{T_j}}{\mu(\boldsymbol{T},j)}
\prod\limits_{j\in\mathcal{J}}
\left(   \frac{N\lambda p(\boldsymbol{T},j)}{\mu(\boldsymbol{T},j)}\right)^{n^j}
 \prod\limits_{j\notin\mathcal{J}}
  \left(1-\frac{N\lambda p(\boldsymbol{T},j)}{\mu(\boldsymbol{T},j)}\right)^{-1} .
\end{array}
\end{equation}
Next, we add all the above states for all $n^j\in\mathbb{N}$ for all $j\in\mathcal{J}$ to obtain the stationary probability that the system state complies with the ordered vector $\boldsymbol{T}$, i.e.,
\begin{equation}
\begin{array}{rcl}
 \mathbb{P} \left(  \boldsymbol{T}  \right) & =& 

 C \prod\limits_{j=1}^{|\boldsymbol{T}|} \frac{N\lambda p_{T_j}}{\mu(\boldsymbol{T},j)}
\prod\limits_{j\in\mathcal{J}} 
\sum\limits_{n^j = 0}^{\infty}
\left(   \frac{N\lambda p(\boldsymbol{T},j)}{\mu(\boldsymbol{T},j)}\right)^{n^j}
 \prod\limits_{j\notin\mathcal{J}}
  \left(1-\frac{N\lambda p(\boldsymbol{T},j)}{\mu(\boldsymbol{T},j)}\right)^{-1} \\
  
  & =& 
 C \prod\limits_{j=1}^{|\boldsymbol{T}|} \frac{N\lambda p_{T_j}}{\mu(\boldsymbol{T},j)}
\prod\limits_{j\in\mathcal{J}} 

\left(  1-  \frac{N\lambda p(\boldsymbol{T},j)}{\mu(\boldsymbol{T},j)}\right)^{-1}
 \prod\limits_{j\notin\mathcal{J}}
  \left(1-\frac{N\lambda p(\boldsymbol{T},j)}{\mu(\boldsymbol{T},j)}\right)^{-1} \\
  
  & =& 
 C \prod\limits_{j=1}^{|\boldsymbol{T}|} \frac{N\lambda p_{T_j}}{\mu(\boldsymbol{T},j)} 
\left(  1-  \frac{N\lambda p(\boldsymbol{T},j)}{\mu(\boldsymbol{T},j)}\right)^{-1}.
\end{array}
\end{equation}
This proves part~c of Theorem~\ref{th:prelimit_distributions}. Using this part and the computations in~\eqref{eq:prelimit_segment} we obtain
\begin{equation}
\begin{array}{rl}
 \mathbb{P} \left( \left( Q^j \right)_{j\in\mathcal{J}} =  \left(  n^j \right)_{j\in\mathcal{J}} \mid \boldsymbol{T}  \right)
=  
\prod\limits_{j\in\mathcal{J}}
 \left(1-\frac{N\lambda p(\boldsymbol{T},j)}{\mu(\boldsymbol{T},j)}\right)
\left(   \frac{N\lambda p(\boldsymbol{T},j)}{\mu(\boldsymbol{T},j)}\right)^{n^j}
=  
\prod\limits_{j\in\mathcal{J}}
 \left(1-p^{\boldsymbol{T},j}\right)
\left(  p^{\boldsymbol{T},j}\right)^{n^j}.
\end{array}
\end{equation}
We observe that the latter expression corresponds to the joint probability distribution of $|\mathcal{J}|$ independent geometric random variables with parameters $(p^{\boldsymbol{T},j})_{j\in\mathcal{J}}$. This concludes the proofs of parts b and d.

Using part~c and~\eqref{eq:prelimit_full} we observe that
\begin{equation}
 \mathbb{P} \left( \left( \left( Q^j_{T_i} \right)_{i=1,\dots,j}\right)_{j\in\mathcal{J}} =  \left( \left( n^j_i \right)_{i=1,\dots,j}\right)_{j\in\mathcal{J}} \mid \boldsymbol{T}  \right)\\
= 
\prod\limits_{j\in\mathcal{J}}
\left(1-\frac{N\lambda p(\boldsymbol{T},j)}{\mu(\boldsymbol{T},j)}\right)
 \binom{|\boldsymbol{n}^j|}{n^j_1,\dots,n^j_j}  \prod\limits_{i=1}^j \left(\frac{N\lambda p_{T_i}}{\mu(\boldsymbol{T},j)}\right)^{n^j_i},
\end{equation}
which holds for any $\mathcal{J} \subseteq \{1,\dots,|\boldsymbol{T}|\}$ and in particular for $\mathcal{J} = \{j\}$ with $1\le j \le |\boldsymbol{T}|$ such that
\begin{equation} \label{eq:uitdruk2}
 \mathbb{P} \left( \left( Q^j_{T_i} \right)_{i=1,\dots,j} =  \left(  n^j_i \right)_{i=1,\dots,j}\mid \boldsymbol{T}  \right)\\
= 
\left(1-\frac{N\lambda p(\boldsymbol{T},j)}{\mu(\boldsymbol{T},j)}\right)
 \binom{|\boldsymbol{n}^j|}{n^j_1,\dots,n^j_j}  \prod\limits_{i=1}^j \left(\frac{N\lambda p_{T_i}}{\mu(\boldsymbol{T},j)}\right)^{n^j_i}.
\end{equation}
It is worth emphasizing that the numbers of jobs of the various types within one segment are not independent as can be deduced from the above expression. In order to obtain $\mathbb{P}(Q^j_{T_i} = n^j_i \mid \boldsymbol{T})$ for some $1\le i \le j \le |\boldsymbol{T}|$, we sum over all vectors $(  n^j_{\tilde{i}} )_{{\tilde{i}}=1,\dots,j}$ with $n^j_i$ fixed. Hence,
\begin{equation}\label{eq:uitdruk1}
\begin{array}{rcl}
\mathbb{P}(Q^j_{T_i} = n^j_i \mid \boldsymbol{T})
& = & 
\left(1-\frac{N\lambda p(\boldsymbol{T},j)}{\mu(\boldsymbol{T},j)}\right)
\left(\frac{N\lambda p_{T_i}}{\mu(\boldsymbol{T},j)}\right)^{n^j_i}
\sum\limits_{n=0}^{\infty}
\sum\limits_{n_1^j + \dots + n^j_j = n - n^j_i}
 \binom{n+n^j_i}{n^j_1,\dots,n^j_j}  \prod\limits_{\substack{\tilde{i}=1\\\tilde{i}\neq i}}^j \left(\frac{N\lambda p_{T_{\tilde{i}}}}{\mu(\boldsymbol{T},j)}\right)^{n^j_{\tilde{i}}}\\
 & = & 
\left(1-\frac{N\lambda p(\boldsymbol{T},j)}{\mu(\boldsymbol{T},j)}\right)
\left(\frac{N\lambda p_{T_i}}{\mu(\boldsymbol{T},j)}\right)^{n^j_i}
\sum\limits_{n=0}^{\infty}
 \binom{n+n^j_i}{n,n^j_i}   \left(\frac{N\lambda (p(\boldsymbol{T},j) - p_{T_i}) }{\mu(\boldsymbol{T},j)}\right)^{n}\\
 
  & = & 
\left(1-\frac{N\lambda p(\boldsymbol{T},j)}{\mu(\boldsymbol{T},j)}\right)
\left(\frac{N\lambda p_{T_i}}{\mu(\boldsymbol{T},j)}\right)^{n^j_i}
 \left(1-\frac{N\lambda (p(\boldsymbol{T},j) - p_{T_i}) }{\mu(\boldsymbol{T},j)}\right)^{-(n^j_i+1)}\\
   & = & 
\left(1-p^{\boldsymbol{T},j,i}\right)
\left(p^{\boldsymbol{T},j,i}\right)^{n^j_i},
\end{array}
\end{equation}
which concludes the proof of part~a.

From \eqref{eq:uitdruk2} and \eqref{eq:uitdruk1}, we observe that
\begin{equation}
\mathbb{P}\left( (Q^j_{T_1},\dots,Q^j_{T_j}) = (n_1,\dots,n_j) \mid  Q^j = n^j, \boldsymbol{T} \right)  = \binom{n^j}{n_1,\dots,n_j} \prod\limits_{i=1}^{j}\left(\frac{ p_{T_i}}{p(\boldsymbol{T},j)}\right)^{n_i},
\end{equation}
with $n^j, n_1,\dots,n_i\in\mathbb{N}$ such that $n_1+\dots +n_j=n^j$ for some $1\le j \le |\boldsymbol{T}|$, which concludes the proof of part~e. 

\end{proof}


\begin{example}
Consider Example~\ref{example:pre_limit_def}, applying Theorem~\ref{th:prelimit_distributions} yields
\begin{equation}
\begin{array}{rcl}
\mathbb{P}(\boldsymbol{T}) & = & 
\frac{N\lambda p_{T_1}}{\mu}\frac{N\lambda p_{T_2}}{2\mu}\frac{N\lambda p_{T_3}}{3\mu}\frac{N\lambda p_{T_4}}{4\mu}
\bigl(1-\frac{N\lambda p_{T_1}}{\mu}\bigl)^{-1}
\bigl(1-\frac{N\lambda (p_{T_1}+p_{T_2})}{2\mu}\bigl)^{-1}
\bigl(1-\frac{N\lambda (p_{T_1}+p_{T_2}+p_{T_3})}{3\mu}\bigl)^{-1}
\bigl(1-\frac{N\lambda}{4\mu}\bigl)^{-1}\\

& = & \frac{1}{27} \bigl(\frac{\lambda}{\mu}\bigl)^4 \bigl(1-\frac{\lambda}{\mu}\bigl)^{-3} \bigl(1-\frac{5\lambda}{6\mu}\bigl)^{-1}.
\end{array}
\end{equation}
Moreover, we have, for instance, that $Q^2\mid\boldsymbol{T}$ is geometrically distributed with parameter $p^{\boldsymbol{T},2} = N\lambda (p_{T_1}+p_{T_2})/(2\mu) = \frac{5\lambda}{6\mu}$,
and 
$Q^3_{T_1}\mid\boldsymbol{T}$ is geometrically distributed with parameter
\begin{equation}
p^{\boldsymbol{T},3,1} = \frac{N\lambda p_{T_1}}{3\mu - N\lambda(p_{T_2}+p_{T_3})} = \frac{\lambda}{3\mu-2\lambda}.
\end{equation}
\end{example}

Observe that Theorem~\ref{th:prelimit_distributions} allows us to count the number of jobs of a fixed type $S\in\mathcal{S}$ given the ordered vector of job types~$\boldsymbol{T}$. Define $j_S(\boldsymbol{T})$ as the position of $S$ in $\boldsymbol{T}$ if $S \in \boldsymbol{T} $ and as $\infty$ otherwise, then
\begin{equation}\label{eq:observation_limit_Q_S}
Q_S \mid \boldsymbol{T}  = \mathds{1}\left\{ S \in \boldsymbol{T} \right\} + \sum_{j=j_S(\boldsymbol{T})}^{|\boldsymbol{T}|} Q^j_{S}.
\end{equation}

We now combine all of the above results to describe the distribution of the numbers of jobs of the various types $(Q_S)_{S\in\mathcal{S}}$, irrespective of the ordered vector of job types.

First we introduce the random vector $\boldsymbol{\alpha} = (\alpha_{\boldsymbol{T}})_{\boldsymbol{T}\in\mathcal{N}}$ to distinguish between the different ordered vectors~$\boldsymbol{T}$ that may occur. In particular,
$
\boldsymbol{\alpha} \equiv \boldsymbol{e}_{\boldsymbol{T}}$, 
 a $|\mathcal{N}|$-dimensional unit vector with a one entry at the position corresponding to $\boldsymbol{T}$, with probability $\mathbb{P}(\boldsymbol{T})$.
 
 Then we construct a matrix $M(\boldsymbol{T})\in\mathbb{N}^{|\boldsymbol{T}|\times|\boldsymbol{T}|}$ with random entries corresponding to the numbers of jobs of the various types in each of the $|\boldsymbol{T}|$ segments, 
 \begin{equation}\label{eq:M_T}
 M(\boldsymbol{T})_{i,j} =
 \left\{
 \begin{array}{lcl}
 Q^j_{T_i} \mid \boldsymbol{T} & & \text{if~} i\le j\\
 0 & & \text{if~} i> j
 \end{array}
 \right.
 \end{equation}
 for a given $\boldsymbol{T}\in\mathcal{N}$. Notice that the different columns correspond to the various (independent) segments (Theorem~\ref{th:prelimit_distributions}, parts~b and~d), and the rows correspond to the numbers of jobs of the various types in those segments (Theorem~\ref{th:prelimit_distributions}, part~a). The triangular part below the diagonal will always contain zeros.
 
Since $|\boldsymbol{T}|$ is possibly smaller than $|\mathcal{S}|$, $M(\boldsymbol{T})$ will be embedded into a larger matrix with the desired dimensions. Let $\tilde{M}(\boldsymbol{T}) \in \mathbb{N}^{|\mathcal{S}|\times|\boldsymbol{T}|}$ be such that
 \begin{equation}
 \label{eq:tilde_M_T}
 \tilde{M}(\boldsymbol{T}) = \left[
 \begin{array}{c}
 M(\boldsymbol{T})\\
 O
 \end{array}
 \right],
 \end{equation}
 where $O$ is an all-zeros matrix with dimensions $(|\mathcal{S}|-|\boldsymbol{T}|)\times |\boldsymbol{T}|$. Now, the rows in $\tilde{M}(\boldsymbol{T})$ are arranged according to the order of the job types in~$\boldsymbol{T}$, and ideally they would be arranged according to the set of all job types $(S)_{S\in\mathcal{S}}$. This can be accomplished by multiplying $\tilde{M}(\boldsymbol{T})$ to the left with a carefully designed permutation matrix $P(\boldsymbol{T})$ of dimensions $|\mathcal{S}|\times|\mathcal{S}|$, i.e.,
 \begin{equation}\label{eq:P_T}
 P(\boldsymbol{T})_{S,j} = \left\{
 \begin{array}{lcl}
 1 & & \text{if~} j=1,\dots,|\boldsymbol{T}|~\text{and}~T_j = S\\
 1 & & \text{if~} j=|\boldsymbol{T}|+1,\dots,|\mathcal{S}|~\text{and}~\bar{T}_{j-|\boldsymbol{T}|-1} = S\\
 0 & & \text{otherwise},
 \end{array}
 \right.
 \end{equation}
 permuting the $j$th row to the $S$th row for $j=1,\dots,|\mathcal{S}|$ and $S\in\mathcal{S}$. The first case ensures that $T_j=S$ is associated with the correct job type. The second case takes care of the job types that are not present in~$\boldsymbol{T}$ as $\bar{\boldsymbol{T}}$ is defined as an arbitrary, but fixed, order of the job types in~$\mathcal{S}\setminus\boldsymbol{T}$.

 \begin{example}
  With the notation as above, we can define $\tilde{M}(\boldsymbol{T})$ and $P(\boldsymbol{T})$ with $\boldsymbol{T}$ as in Example~\ref{example:pre_limit_def}, i.e., 
 \begin{equation}
 \tilde{M}(\boldsymbol{T}) = 
 \left[
 \begin{array}{cccc}
 Q^1_{T_1}\mid \boldsymbol{T} &  Q^2_{T_1}\mid \boldsymbol{T} &  Q^3_{T_1}\mid \boldsymbol{T}&  Q^4_{T_1}\mid \boldsymbol{T} \\
0 &  Q^2_{T_2}\mid \boldsymbol{T} &  Q^3_{T_2}\mid \boldsymbol{T} &  Q^4_{T_2}\mid \boldsymbol{T}\\
0 &  0 &  Q^3_{T_3}\mid \boldsymbol{T} &  Q^4_{T_3}\mid \boldsymbol{T}\\
0 & 0 & 0 &  Q^4_{T_4}\mid \boldsymbol{T}
 \end{array}
 \right]
 \quad
 \text{and}
 \quad
 P(\boldsymbol{T}) = 
 \left[
 \begin{array}{cccc}
1& 0 &  0 &  0 \\
0 &  0 & 0 & 1\\
0 &  1 &  0  & 0\\
0 &  0 &  1  & 0\\
 \end{array}
 \right].
 \end{equation}
 Hence,
 \begin{equation}
  P(\boldsymbol{T}) \tilde{M}(\boldsymbol{T}) = 
  \left[
 \begin{array}{cccc}
 Q^1_{T_1}\mid \boldsymbol{T} &  Q^2_{T_1}\mid \boldsymbol{T} &  Q^3_{T_1}\mid \boldsymbol{T}&  Q^4_{T_1}\mid \boldsymbol{T} \\
 0 & 0 & 0 &  Q^4_{T_4}\mid \boldsymbol{T}\\
 0 &  Q^2_{T_2}\mid \boldsymbol{T} &  Q^3_{T_2}\mid \boldsymbol{T} &  Q^4_{T_2}\mid \boldsymbol{T}\\
 0 &  0 &  Q^3_{T_3}\mid \boldsymbol{T} &  Q^4_{T_3}\mid \boldsymbol{T}\\
 \end{array}
 \right].
 \end{equation}
 Relying on the observation in~\eqref{eq:observation_limit_Q_S}, we can write
\begin{equation} 
 \left[
 \begin{array}{c}
 Q_{\{1\}}\mid \boldsymbol{T} \\
Q_{\{1,2,3\}}\mid \boldsymbol{T} \\
Q_{\{3\}}\mid \boldsymbol{T} \\
Q_{\{3,4\}}\mid \boldsymbol{T} \\
 \end{array}
 \right]
 = 
 \left[
 \begin{array}{c}
1 \\
1 \\
1 \\
1 \\
 \end{array}
 \right]+
P(\boldsymbol{T}) \tilde{M}(\boldsymbol{T})
  \left[
 \begin{array}{c}
1 \\
1 \\
1 \\
1 \\
 \end{array}
 \right]
 = 
  \left[
 \begin{array}{c}
 1 + Q^1_{T_1} + Q^2_{T_1}+Q^3_{T_1}+Q^4_{T_1}\mid \boldsymbol{T}\\
 1 + Q^4_{T_4}\mid \boldsymbol{T}\\
 1 + Q^2_{T_2} + Q^3_{T_2} + Q^4_{T_2} \mid \boldsymbol{T}\\
1+Q^3_{T_3} + Q^4_{T_3} \mid \boldsymbol{T} 
 \end{array}
 \right].
\end{equation}
  \end{example}
 
Define
 the vector $\mathds{1}(\boldsymbol{T}) \in \{0,1\}^{|\mathcal{S}|}$ such that $\mathds{1}(\boldsymbol{T})_S = \mathds{1}\{S\in\boldsymbol{T}\}$ and an all-ones vector of length~$|\boldsymbol{T}|$, $E(\boldsymbol{T})$.
 
\begin{theorem}
\label{th:probabilistic_interpretation}
With the notation as above, the numbers of jobs of the various types can be written in terms of geometrically distributed random variables defined in Theorem~\ref{th:prelimit_distributions},
\begin{equation}\label{eq:probabilistic_representation}
(Q_S)_{S\in\mathcal{S}} \overset{d}{=} \sum\limits_{T\in\mathcal{N}} \alpha_{\boldsymbol{T}} \left( \mathds{1}(\boldsymbol{T}) + P(\boldsymbol{T}) \tilde{M}(\boldsymbol{T}) E(\boldsymbol{T}) \right).
\end{equation}
\end{theorem} 
The above theorem gives a probabilistic representation of the numbers of jobs of the various types which agrees with the PGF in Proposition~\ref{prop:pgf}. We emphasize that $P(\boldsymbol{T})$ and $E(\boldsymbol{T})$ consist solely of 0 and 1 entries, while $\tilde{M}(\boldsymbol{T})$ contains geometrically distributed random variables where the various columns are independent (Theorem~\ref{th:prelimit_distributions}, parts~a and~d) and the sum of these columns yields again geometrically distributed random variables (Theorem~\ref{th:prelimit_distributions}, part~b). Hence $(Q_S)_{S\in\mathcal{S}}$ follows a mixture distribution, where each mixture component corresponds to a sum of geometrically distributed random variables.

\begin{remark}
The above probabilistic interpretation focuses on the redundancy c.o.c.\ policy. However, many of the above observations and results are also true for the redundancy c.o.s.\ policy with some minor adjustments. In particular, the results in parts a, b and d of Theorem~\ref{th:prelimit_distributions} still hold for the \emph{waiting} numbers of jobs in the various segments and part~c is alternatively formulated as
\begin{equation}\label{eq:P_T_cos}
\mathbb{P}_{\text{c.o.s.}}(\boldsymbol{T}) = C'h(\boldsymbol{T},\boldsymbol{1})k(\boldsymbol{T})
\end{equation}
with normalization constant $(C')^{-1} = \sum_{\boldsymbol{T}\in\mathcal{N}}h(\boldsymbol{T},\boldsymbol{1})k(\boldsymbol{T})$
and 
\begin{equation}
k(\boldsymbol{T}) \coloneqq \sum\limits_{\boldsymbol{u} \in \mathcal{E}(\boldsymbol{T})} \prod\limits_{l=1}^{|\boldsymbol{u}|} \frac{\mu_{u_l}}{\lambda_{\mathcal{C}(u_1, \dots, u_l)}}
\end{equation}
with $\mathcal{E}(\boldsymbol{T})$ the set of all ordered vectors of idle servers which are not compatible with any of the job types in~$\boldsymbol{T}$.
Consequently the statement in Theorem~\ref{th:probabilistic_interpretation} is also true for the numbers of waiting jobs of the various types, i.e., $(\tilde{Q}_S)_{S\in\mathcal{S}}$, when the weights of the unit vectors $\boldsymbol{\alpha}$ are updated according to~\eqref{eq:P_T_cos}.
\end{remark}

\subsection{Heavy-traffic characterization and Theorem~\ref{cor:mixture_dist}}
\label{subsec:alternative_proof_HT}

We now use the stochastic characterization of the system as derived in the previous subsection to investigate the heavy-traffic behavior. This analysis will result in an alternative proof of Theorem~\ref{cor:mixture_dist} along with a probabilistic interpretation.

We first identify the dominant state configurations in the heavy-traffic regime.
As it turns out, in general only the states with configurations with a maximum number of nested critical subsets will occur with non-zero probability in the heavy-traffic regime. 
\begin{lemma}[State configurations in heavy traffic]\label{lem:limit_stateprob}
It holds that
\begin{equation}\label{eq:limit_stateprob}
\mathbb{P}^*(\boldsymbol{T}) \coloneqq \lim\limits_{\lambda\uparrow\lambda^*} \mathbb{P}(\boldsymbol{T}) = 
\left\{
\begin{array}{ll}
\frac{\beta(\boldsymbol{T})}{\beta(\mathcal{N}_K)} & \text{if~} \boldsymbol{T} \in\mathcal{N}_K, \\
0 & \text{if~} \boldsymbol{T} \in\mathcal{N}_k, k<K,
\end{array}
\right.
\end{equation}
for any $\boldsymbol{T} \in\mathcal{N}_k, k=0,1,\dots,K$, with $\beta(\boldsymbol{T})$ and $\beta(\mathcal{N}_K)$ as in~\eqref{eq:beta} and~\eqref{eq:beta_N_K}, respectively.
\end{lemma}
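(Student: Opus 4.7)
The plan is to leverage the pre-limit characterization in Theorem~\ref{th:prelimit_distributions}(c), namely $\mathbb{P}(\boldsymbol{T}) = C \cdot h(\boldsymbol{T}, \boldsymbol{1})$ with $C^{-1} = \sum_{\tilde{\boldsymbol{T}} \in \mathcal{N}} h(\tilde{\boldsymbol{T}}, \boldsymbol{1})$, together with the asymptotics for $h$ established in Lemma~\ref{lem:intermediate_result}. Specialized to $\boldsymbol{t} = \boldsymbol{0}$ (equivalently $\boldsymbol{z} = \boldsymbol{1}$), Lemma~\ref{lem:intermediate_result} gives, for any $\boldsymbol{T} \in \mathcal{N}_k$,
\begin{equation}
\lim_{\lambda \uparrow \lambda^*} \left(1 - \tfrac{\lambda}{\lambda^*}\right)^k h(\boldsymbol{T}, \boldsymbol{1}) = \beta(\boldsymbol{T}) \in (0, \infty),
\end{equation}
since every critical index $j \in \mathrm{CR}(\boldsymbol{T})$ contributes exactly one diverging factor $(1 - N\lambda p(\boldsymbol{T},j)/\mu(\boldsymbol{T},j))^{-1}$, while all remaining factors stay bounded and the products $\prod_{j \in \mathrm{CR}(\boldsymbol{T})}(1 + \cdots)^{-1}$ collapse to $1$.

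The strategy is then to multiply both the numerator and denominator of the ratio $\mathbb{P}(\boldsymbol{T}) = h(\boldsymbol{T},\boldsymbol{1}) / \sum_{\tilde{\boldsymbol{T}} \in \mathcal{N}} h(\tilde{\boldsymbol{T}},\boldsymbol{1})$ by the same normalizing power $(1-\lambda/\lambda^*)^K$. Splitting the denominator according to the depth stratification $\mathcal{N} = \bigcup_{k=0}^K \mathcal{N}_k$, I would write
\begin{equation}
\left(1 - \tfrac{\lambda}{\lambda^*}\right)^K \sum_{\tilde{\boldsymbol{T}} \in \mathcal{N}} h(\tilde{\boldsymbol{T}}, \boldsymbol{1})
= \sum_{k=0}^{K} \left(1 - \tfrac{\lambda}{\lambda^*}\right)^{K-k} \sum_{\tilde{\boldsymbol{T}} \in \mathcal{N}_k} \left(1 - \tfrac{\lambda}{\lambda^*}\right)^{k} h(\tilde{\boldsymbol{T}}, \boldsymbol{1}).
\end{equation}
By the display above, each inner sum converges to $\beta(\mathcal{N}_k)$ (finite), while the prefactor $(1-\lambda/\lambda^*)^{K-k}$ vanishes whenever $k < K$. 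Hence the entire denominator converges to $\beta(\mathcal{N}_K)$, which is strictly positive since $\mathcal{N}_K$ is non-empty by Definition~\ref{def:depthK} and each $\beta(\boldsymbol{T}) > 0$.

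For the numerator, I would treat the two cases of the statement separately. If $\boldsymbol{T} \in \mathcal{N}_K$, then $(1-\lambda/\lambda^*)^K h(\boldsymbol{T},\boldsymbol{1}) \to \beta(\boldsymbol{T})$, so the ratio tends to $\beta(\boldsymbol{T})/\beta(\mathcal{N}_K) = \mathbb{P}^*(\boldsymbol{T})$ as defined in \eqref{eq:P_star_T}. If instead $\boldsymbol{T} \in \mathcal{N}_k$ with $k < K$, then
\begin{equation}
\left(1 - \tfrac{\lambda}{\lambda^*}\right)^K h(\boldsymbol{T},\boldsymbol{1}) = \left(1 - \tfrac{\lambda}{\lambda^*}\right)^{K-k} \cdot \left(1 - \tfrac{\lambda}{\lambda^*}\right)^{k} h(\boldsymbol{T},\boldsymbol{1}) \longrightarrow 0 \cdot \beta(\boldsymbol{T}) = 0,
\end{equation}
so the ratio tends to $0$. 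I don't foresee any genuine obstacles here; the proof is essentially a bookkeeping exercise that reuses the same rescaling idea already used in the proof of Theorem~\ref{th:limit_gf}, and in fact is the $\boldsymbol{t} = \boldsymbol{0}$ specialization of that argument. The only mild subtlety worth noting is that one must verify $\beta(\mathcal{N}_K) > 0$ (which is immediate from positivity of each $\beta(\boldsymbol{T})$ and non-emptiness of $\mathcal{N}_K$) to ensure the limit ratio is well defined.
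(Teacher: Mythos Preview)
Your proposal is correct and mirrors the paper's own proof essentially line for line: both start from $\mathbb{P}(\boldsymbol{T}) = C\cdot h(\boldsymbol{T},\boldsymbol{1})$ via Theorem~\ref{th:prelimit_distributions}(c), multiply numerator and denominator by $(1-\lambda/\lambda^*)^K$, and invoke Lemma~\ref{lem:intermediate_result} at $\boldsymbol{t}=\boldsymbol{0}$ to show that only the $\mathcal{N}_K$ terms survive in both. Your additional remark that $\beta(\mathcal{N}_K)>0$ is a welcome sanity check the paper leaves implicit.
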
 
As already suggested by the notation, the above expression for the heavy-traffic limit of the probabilities $\mathbb{P}(\boldsymbol{T})$ coincides with~\eqref{eq:P_star_T} for $\boldsymbol{T}\in\mathcal{N}_K$.
 The proof of Lemma~\ref{lem:limit_stateprob} follows similar arguments as the proof of Proposition~\ref{th:limit_gf}. 
\begin{proof}
Fix any $\boldsymbol{T}\in\mathcal{N}_{k}$ for some $k=0,\dots,K$. Then we know from Theorem~\ref{th:prelimit_distributions} that
$\mathbb{P}(\boldsymbol{T})  
 = C \cdot h(\boldsymbol{T},\boldsymbol{1})$,
with $h(\boldsymbol{T},\boldsymbol{1})$ as in~\eqref{eq:def_h_T_z} and $C$ a normalization constant. From Lemma~\ref{lem:intermediate_result} (with $\boldsymbol{z}\equiv \boldsymbol{1}$ or $\boldsymbol{t}\equiv \boldsymbol{0}$),
we observe that 
\begin{equation}
\begin{array}{rcccc}
\lim\limits_{\lambda\uparrow\lambda^*} \mathbb{P}(\boldsymbol{T})
&=&  \lim\limits_{\lambda\uparrow\lambda^*} \mathbb{P}(\boldsymbol{T}) \times \frac{\left(1-\frac{\lambda}{\lambda^*}\right)^K}{\left(1-\frac{\lambda}{\lambda^*}\right)^K}
&=& \frac{h^*(\boldsymbol{T},\boldsymbol{0}) \lim\limits_{\lambda\uparrow\lambda^*} \left(1-\frac{\lambda}{\lambda^*}\right)^{K-k}}{\sum\limits_{\tilde{k}=0}^K \sum\limits_{\tilde{\boldsymbol{T}}\in\mathcal{N}_{\tilde{k}}} h^*(\tilde{\boldsymbol{T}},\boldsymbol{0}) \lim\limits_{\lambda\uparrow\lambda^*} \left(1-\frac{\lambda}{\lambda^*}\right)^{K-\tilde{k}} } 
 \\ 
 &=& \frac{h^*(\boldsymbol{T},\boldsymbol{0})\mathds{1}\left\{ k = K \right\}}{\sum\limits_{\tilde{k}=0}^K \mathds{1}\left\{ \tilde{k} = K \right\} \sum\limits_{\tilde{\boldsymbol{T}}\in\mathcal{N}_{\tilde{k}}} h^*(\tilde{\boldsymbol{T}},\boldsymbol{0})  } 
 &=& \frac{h^*(\boldsymbol{T},\boldsymbol{0})}{\sum\limits_{\tilde{\boldsymbol{T}}\in\mathcal{N}_K} h^*(\tilde{\boldsymbol{T}},\boldsymbol{0})  } \mathds{1}\left\{ k = K \right\}
 \\
&=& \frac{\beta(\boldsymbol{T}) }{\tilde{\beta}(\mathcal{N}_K)} \mathds{1}\left\{ k = K \right\}. & &
\end{array}
\end{equation}
We used that $h^*(\boldsymbol{T},\boldsymbol{0}) = \beta(\boldsymbol{T})$ for any $\boldsymbol{T}$.
This concludes the proof.
\end{proof}

\begin{example}\label{exam:with_parameters} 
Consider the system in Example~\ref{ex:example_main_result}.
 Using Lemma~\ref{lem:limit_stateprob}, it can be computed that
\begin{equation}
\begin{array}{rcl}
\mathbb{P}^*([\{1\},\{3\},\{3,4\},\{1,2,3\}])& =& \frac{4}{9} \\
 \mathbb{P}^*([\{1\},\{3,4\},\{3\},\{1,2,3\}]) & =& \frac{2}{9} \\
\mathbb{P}^*([\{3\},\{3,4\},\{1\},\{1,2,3\}]) & =& \frac{2}{9} \\
\mathbb{P}^*([\{3,4\},\{3\},\{1\},\{1,2,3\}]) & =& \frac{1}{9}. \\
\end{array}
\end{equation}
\end{example}

Second, we focus on the number of jobs in each segment, i.e., $(Q^j\mid \boldsymbol{T})_{j=1}^{|\boldsymbol{T}|}$ for a given ordered vector $\boldsymbol{T}$.
As formalized below, the scaled number of jobs between the first occurrences of a type $T_j$ and $T_{j+1}$ job, $Q^j\mid \boldsymbol{T}$, will either converge to an exponentially distributed random variable or vanish in the heavy-traffic regime.

\begin{lemma}[Convergence - jobs in a segment]\label{lem:limit_segment}
Let $\boldsymbol{T}\in\mathcal{N}_k$ for some $k=0,1,\dots,K$ and consider $\text{CR}(\boldsymbol{T}) = \{i_1,\dots,i_k\}$ as in Definition~\ref{def:crit_vectors}. 
It holds for $l = 1,\dots, k$ that
\begin{equation}
 \Bigl(1-\frac{\lambda}{\lambda^*} \Bigl) Q^{i_l} \mid \boldsymbol{T} \overset{d}{\rightarrow} U_{i_l} 
\end{equation}
as $\lambda\uparrow\lambda^*$, with $(U_{i_l})_{l=1}^k$ independent exponentially distributed random variables with unit mean. For $j \in \{1,\dots, |\boldsymbol{T}|\} \setminus \text{CR}(\boldsymbol{T})$, we have
\begin{equation}
\Bigl(1-\frac{\lambda}{\lambda^*} \Bigl) Q^{j} \mid \boldsymbol{T} \overset{d}{\rightarrow} 0
\end{equation}
as $\lambda\uparrow\lambda^*$.
\end{lemma}

\begin{proof}
Notice that $p^{\boldsymbol{T},j} = \lambda/\lambda^*$ when $j\in \text{CR}(\boldsymbol{T})$, observe then the following limiting behavior of the parameter $p^{\boldsymbol{T},j}$ in \eqref{eq:param_geometric}: 
\begin{equation}\label{eq:limit_param_geom}
\lim_{\lambda\uparrow\lambda^*} p^{\boldsymbol{T},j} = 
\left\{
\begin{array}{l l}
1 & \text{if~} j \in \text{CR}(\boldsymbol{T}),\\
\frac{N\lambda^* p(\boldsymbol{T},j)}{\mu(\boldsymbol{T},j)} < 1 & \text{otherwise}.
\end{array}
\right.
\end{equation}
 Hence, \begin{equation}
\lim\limits_{\lambda\uparrow\lambda^*} \mathbb{E}\left[ \mathrm{e}^{-t\left(1-\lambda/\lambda^*\right)Q^j} \mid \boldsymbol{T}\right]
=
\lim\limits_{\lambda\uparrow\lambda^*} \frac{1-p^{\boldsymbol{T},j}}{1-p^{\boldsymbol{T},j}\mathrm{e}^{-t\left(1-\lambda/\lambda^*\right)}}
=\left\{
\begin{array}{l l}
\frac{1}{1+t} & \text{if~} j \in \text{CR}(\boldsymbol{T}),\\
1 & \text{otherwise},
\end{array}
\right.
\end{equation}
for any $t\ge 0$.
Noticing that $\frac{1}{1+t}$ is the Laplace transform of a unit-mean exponential random variable, the results follow from applying L\'evy's Continuity Theorem~\cite{Jacod2000}.
\end{proof}

Since some of the segments, i.e., $(Q^j \mid \boldsymbol{T} )_{j\notin \text{CR}(\boldsymbol{T})}$, will vanish after scaling, the numbers of jobs of the types $T_1,\dots,T_j$ in that segment will also vanish. For the numbers of jobs of the various types in the remaining segments, we can show the following convergence results. 

\begin{lemma}[Convergence - job types in a segment]\label{lem:limit_segment_type_geom}
Let $\boldsymbol{T}\in\mathcal{N}_k$ for some $k=0,1,\dots,K$ and $j=1,\dots,|\boldsymbol{T}|$, then
\begin{equation}
\Bigl(1-\frac{\lambda}{\lambda^*}\Bigl)( Q^j_{T_i} \mid \boldsymbol{T} )_{i=1}^j \overset{d}{\rightarrow} 
\left\{
\begin{array}{lcl}
U\bigl(\frac{p_{T_i}}{p(\boldsymbol{T},j)}\bigl)_{i=1}^j & & \text{if~} j\in\text{CR}(\boldsymbol{T}),\\
\boldsymbol{0} & & \text{otherwise,} 
\end{array}
\right.
\end{equation}
as $\lambda\uparrow\lambda^*$, for $i=1,\dots,j$ where $U$ denotes a unit-mean exponential random variable.
\end{lemma}

\begin{proof}
Using Theorem~\ref{th:prelimit_distributions}, we can derive the PGF of $\bigl(1-\frac{\lambda}{\lambda^*}\bigl)( Q^j_{T_i}\mid \boldsymbol{T} )_{i=1}^j$:
\begin{equation}
\begin{array}{rcl}
\mathbb{E}\Bigr[ \prod\limits_{i=1}^j z_{T_i}^{Q^j_{T_i}}\mid\boldsymbol{T}\Bigr]
& = & \sum\limits_{q=0}^{\infty}\mathbb{E}\Bigr[ \prod\limits_{i=1}^j z_{T_i}^{Q^j_{T_i}}\mid Q^j = q,\boldsymbol{T}\Bigr] \mathbb{P}( Q^j\mid\boldsymbol{T})\\
& = & \sum\limits_{q=0}^{\infty} \bigl( \sum_{i=1}^j \frac{p_{T_i}}{p(\boldsymbol{T},j)}z_{T_i}\bigl)^q (1-p^{\boldsymbol{T},j})\left(p^{\boldsymbol{T},j}\right)^q\\
& = & \frac{1-p^{\boldsymbol{T},j}}{1-\sum\limits_{i=1}^j p^{\boldsymbol{T},j}\frac{p_{T_i}}{p(\boldsymbol{T},j)}z_{T_i}},
\end{array}
\end{equation}
with $z_{T_i} = \exp\bigl(-\bigl(1-\frac{\lambda}{\lambda^*} \bigl)t_{T_i} \bigl)$. Relying on~\eqref{eq:limit_param_geom}, we observe that
\begin{equation}
\lim\limits_{\lambda\uparrow\lambda^*}\frac{1-p^{\boldsymbol{T},j}}{1-\sum\limits_{i=1}^j p^{\boldsymbol{T},j}\frac{p_{T_i}}{p(\boldsymbol{T},j)}z_{T_i}} = 
\left\{
\begin{array}{lcl}
\Bigl( 1+\sum\limits_{i=1}^j \frac{p_{T_i}}{p(\boldsymbol{T},j)} t_{T_i} \Bigl)^{-1} & & \text{if~} j\in\text{CR}(\boldsymbol{T}),\\
1 & & \text{otherwise.} 
\end{array}
\right.
\end{equation}
Noticing that $\bigl( 1+\sum_{i=1}^j \frac{p_{T_i}}{p(\boldsymbol{T},j)} t_{T_i} \bigl)^{-1}$ is the Laplace transform corresponding to $U\bigl(\frac{p_{T_i}}{p(\boldsymbol{T},j)}\bigl)_{i=1}^j$, the results follow by applying L\'evy's Continuity Theorem~\cite{Jacod2000}.
\end{proof}

From the above lemmas and the observation in~\eqref{eq:observation_limit_Q_S}, it follows that
\begin{equation}
\Bigl(1-\frac{\lambda}{\lambda^*}\Bigl) \left(Q_S \mid \boldsymbol{T} \right)_{S\in\mathcal{S}}
\overset{d}{\rightarrow} \sum\limits_{\substack{j = j_S(\boldsymbol{T})\\ j\in\text{CR}(\boldsymbol{T})}}^{|\boldsymbol{T}|} \frac{p_S}{p(\boldsymbol{T},j)}U_j
\end{equation}
as $\lambda \uparrow \lambda^*$, for any $\boldsymbol{T}\in\mathcal{N}_k$ for some $k=0,1,\dots,K$. Note that some of the terms in the above summations have no contribution, i.e., $j\notin \mathrm{CR}(\boldsymbol{T})$, as they may correspond to vanishing segments. In fact, at most $k$ of the terms are nonzero. Therefore we can rewrite the summation while focusing on those non-zero elements. Let $\text{CR}(\boldsymbol{T}) = \{i_1,\dots,i_k\}$ as in Definition~\ref{def:crit_vectors} with $i_1<\dots <i_k$, then
\begin{equation}\label{eq:convergence_type_level}
\Bigl(1-\frac{\lambda}{\lambda^*}\Bigl) \left(Q_S \mid \boldsymbol{T} \right)_{S\in\mathcal{S}}
\overset{d}{\rightarrow} 
\biggl(\sum\limits_{\substack{l = 1\\ i_l \ge j_S(\boldsymbol{T})}}^{k} \frac{p_S}{p(\boldsymbol{T},i_l)}U_{i_l}\biggl)_{S\in\mathcal{S}}.
\end{equation}

\begin{example}
Consider the system as outlined in Example~\ref{ex:example_main_result} and $\boldsymbol{T}= [\{1\},\{3\},\{3,4\},\{1,2,3\}]$, then $\text{CR}(\boldsymbol{T}) = \{i_1,i_2,i_3\} = \{1,3,4\}$. According to Lemma~\ref{lem:limit_segment}, we observe 
\begin{equation}
\Bigl(1-\frac{\lambda}{\lambda^*}\Bigl) \left( Q^1,Q^2,Q^3,Q^4 \mid \boldsymbol{T} \right) \overset{d}{\rightarrow} (U_1,0,U_2,U_3)
\end{equation}
as $\lambda\uparrow\lambda^*$ with $U_1, U_2$ and $U_3$ three independent unit-mean exponential random variables. The second segment is vanishing for the given ordered vector $\boldsymbol{T}$, since the limiting arrival rate of type-$\{1\}$ and type-$\{3\}$ jobs combined is strictly less than the aggregate service rate of their compatible servers, i.e., $N\lambda^*(p_{\{1\}}+p_{\{3\}}) = \frac{5}{3}\mu < 2\mu$.

Moreover, the scaled total number of jobs of the various types behaves as
\begin{equation}\label{eq:example_HT}
\Bigl(1-\frac{\lambda}{\lambda^*}\Bigl) \left( Q_{\{1\}},Q_{\{1,2,3\}},Q_{\{3\}}, Q_{\{3,4\}} \mid \boldsymbol{T} \right) \overset{d}{\rightarrow} \Bigl(
U_1 + \frac{1}{3}U_2 + \frac{1}{4}U_3,
\frac{1}{4}U_3,
 \frac{2}{9}U_2 + \frac{1}{6}U_3,
\frac{4}{9}U_2 + \frac{1}{3}U_3
 \Bigl),
\end{equation}
as $\lambda\uparrow\lambda^*$ by relying on the observation in~\eqref{eq:convergence_type_level}.
\end{example}

We will now combine the results in this subsection with those in the previous subsection to describe the heavy-traffic limiting distribution of the scaled numbers of jobs of the various types, irrespective of the ordered vector of job types.

First we note that only few of the possible values of the $\mathcal{N}$-dimensional random vector $\boldsymbol{\alpha} = (\alpha_{\boldsymbol{T}})_{\boldsymbol{T}\in\mathcal{N}}$ can occur in the heavy-traffic regime with non-zero probability. Specifically, Lemma~\ref{lem:limit_stateprob} implies that 
\begin{equation}
\boldsymbol{\alpha} = (\alpha_{\boldsymbol{T}})_{\boldsymbol{T}\in \mathcal{N}} \overset{d}{\rightarrow} \boldsymbol{\alpha}^* =  (\alpha^*_{\boldsymbol{T}})_{\boldsymbol{T}\in \mathcal{N}}
\end{equation}
as $\lambda\uparrow\lambda^*$, where
$\boldsymbol{\alpha}^*= \boldsymbol{e}_{\boldsymbol{T}}$ with probability $\mathbb{P}^*(\boldsymbol{T})$ if  $\boldsymbol{T} \in \mathcal{N}_K$ and $\boldsymbol{\alpha}^*= \boldsymbol{0}$ otherwise.

Second, we observe that, irrespective of the choice of $\boldsymbol{T} \in \mathcal{N}$, $\mathds{1}(\boldsymbol{T})\in\{0,1\}^{|\mathcal{S}|}$ becomes an all-zeros vector in the limit when scaled by $(1-\lambda/\lambda^*)$.
In other words,
the contribution of the first occurrence of a job type to the (scaled) total number of jobs of that type will be negligible in the heavy-traffic regime.

Hence, we are now interested in 
\begin{equation}
\lim\limits_{\lambda\uparrow\lambda^*} \Bigl( 1-\frac{\lambda}{\lambda^*} \Bigl) P(\boldsymbol{T})\tilde{M}(\boldsymbol{T}) = P(\boldsymbol{T}) \cdot \Bigl\{ \lim\limits_{\lambda\uparrow\lambda^*}  \Bigl( 1-\frac{\lambda}{\lambda^*} \Bigl) \tilde{M}(\boldsymbol{T}) \Bigl\}   \eqqcolon P(\boldsymbol{T}) \cdot \tilde{M}^*(\boldsymbol{T}).
\end{equation}
Thanks to Lemmas~\ref{lem:limit_segment} and~\ref{lem:limit_segment_type_geom} we know how to characterize the limiting random variables which are the entries of the matrix $\tilde{M}^*(\boldsymbol{T})$. With $\boldsymbol{T}\in\mathcal{N}_k$, only $k$ out of the $|\boldsymbol{T}|$ columns will contain non-zero elements (Lemma~\ref{lem:limit_segment}), with which we can associate $k$ independent unit-mean exponential random variables $U_1,\dots,U_k$. 

Combining the above results, the (scaled) limiting distribution of~\eqref{eq:probabilistic_representation} is given by
\begin{equation}
 \Bigl( 1-\frac{\lambda}{\lambda^*} \Bigl) \left( Q_S \right)_{S\in\mathcal{S}} \overset{d}{\rightarrow} 
 \sum\limits_{\boldsymbol{T}\in\mathcal{N}_K} \alpha^*_{\boldsymbol{T}}P(\boldsymbol{T})  \tilde{M}^*(\boldsymbol{T}) E(\boldsymbol{T}),
\end{equation}
as $\lambda\uparrow\lambda^*$. Note that the expression on the right-hand side is indeed a vector with components indexed in $S\in\mathcal{S}$.

Furthermore, rewriting $ \tilde{M}^*(\boldsymbol{T}) E(\boldsymbol{T})$ can further simplify the representation due to the $|\boldsymbol{T}|-K$ all-zeros columns. Define the $|\mathcal{S}|\times K$ dimensional matrix $W(\boldsymbol{T})$, which contains the weights or fractions each of the job types receives of the independent exponential random variables $U_1,\dots,U_k$,
i.e.,
\begin{equation}\label{eq:W_T}
 W(\boldsymbol{T})_{i,l} = \left\{
 \begin{array}{lcl}
\frac{p_{T_i}}{p(\boldsymbol{T},i_l)} & & \text{if~} i \le i_l,\\
 0 & & \text{otherwise},
 \end{array}
 \right.
 \end{equation}
 with $\text{CR}(\boldsymbol{T}) = \{i_1,\dots,i_K\}$ for all $i = 1,\dots,|\mathcal{S}|$ and $l=1,\dots,K$.
 Note that we will not alter the permutation matrix~$P(\boldsymbol{T})$, hence the indices in the above-defined matrix are resembling the job types, and hence the order, of the vector $\boldsymbol{T}$. Moreover, with 
 \begin{equation}
 U(K) = 
 \left[
 \begin{array}{c}
 U_1\\ U_2 \\ \vdots \\U_K
 \end{array}
 \right],
 \end{equation}
 we have that $\tilde{M}^*(\boldsymbol{T}) E(\boldsymbol{T}) = W(\boldsymbol{T})U(K)$.
Indeed, for $i=1,\dots,|\boldsymbol{T}|$ it holds that
\begin{equation}
\begin{array}{rcccccc}
[\tilde{M}^*(\boldsymbol{T}) E(\boldsymbol{T})]_i 
& = & \sum\limits_{j=1}^{\boldsymbol{T}}  \tilde{M}^*(\boldsymbol{T})_{i,j}E(\boldsymbol{T})_j 
& = & \sum\limits_{j=1}^{\boldsymbol{T}}  \tilde{M}^*(\boldsymbol{T})_{i,j} 
& = & \sum\limits_{j=1}^{\boldsymbol{T}} \lim\limits_{\lambda\uparrow\lambda^*} \left(1-\frac{\lambda}{\lambda^*} \right) Q^{j}_{T_i} \\
& = & \sum\limits_{j=i}^{\boldsymbol{T}} \lim\limits_{\lambda\uparrow\lambda^*} \left(1-\frac{\lambda}{\lambda^*} \right) Q^{j}_{T_i} 
& = & \sum\limits_{\substack{l=1 \\ i_l \ge i}}^{K} \frac{p_{T_i}}{p(\boldsymbol{T},i_l)}U_l 
& = & \sum\limits_{l=1}^{K}W(\boldsymbol{T})_{i,l}U(K)_l \\ 
& = & [W(\boldsymbol{T})U(K)]_i. & & & &
\end{array}
\end{equation}
If $i=|\boldsymbol{T}|+1,\dots,|\mathcal{S}|$, then $[\tilde{M}^*(\boldsymbol{T}) E(\boldsymbol{T})]_i = [W(\boldsymbol{T})U(K)]_i = 0$.

Consequently, we conclude that 
\begin{equation}
 \Bigl( 1-\frac{\lambda}{\lambda^*} \Bigl) \left( Q_S \right)_{S\in\mathcal{S}} \overset{d}{\rightarrow} 
 \sum\limits_{\boldsymbol{T}\in\mathcal{N}_K} \alpha^*_{\boldsymbol{T}}P(\boldsymbol{T})  W(\boldsymbol{T}) U(K) 
\end{equation}
as $\lambda\uparrow\lambda^*$,
which is consistent with the result in Theorem~\ref{cor:mixture_dist}. We emphasize that the matrices $P(\boldsymbol{T})$ and $W(\boldsymbol{T})$ consist solely of real-valued and non-negative entries given the vector $\boldsymbol{T}$, and only $U(K)$ contains random variables. Hence, $\left( Q_S \right)_{S\in\mathcal{S}}$, properly scaled, will follow a mixture distribution in the limiting regime, where each mixture component consists of a weighted sum of independent exponential random variables. Note that the expression on the right-hand side of the above equation is again a vector with components indexed by $S\in\mathcal{S}$.

\begin{example}
Consider Example~\ref{ex:example_main_result} and $\boldsymbol{T} = [\{1\},\{3\},\{3,4\},\{1,2,3\}]$, then 
\begin{equation}
P(\boldsymbol{T})\tilde{M}^*(\boldsymbol{T})
=
\left[
\begin{array}{cccc}
U_1 & 0 & \frac{1}{3}U_2 & \frac{1}{4}U_3\\
 0 & 0 & 0 &  \frac{1}{4}U_3\\
 0 & 0 & \frac{2}{9}U_2 & \frac{1}{6}U_3\\
0 & 0 & \frac{4}{9}U_2 & \frac{1}{3}U_3\\
\end{array}
\right].
\end{equation}
Alternatively,
\begin{equation}
P(\boldsymbol{T})W(\boldsymbol{T})
=
\left[
\begin{array}{ccc}
 1 & \frac{1}{3} & \frac{1}{4}\\
0 & 0 & \frac{1}{4}\\
0 & \frac{2}{9} & \frac{1}{6}\\
0 & \frac{4}{9} & \frac{1}{3}\\
\end{array}
\right]
\quad
\text{and}
\quad
U(3)
=
\left[
\begin{array}{c}
U_1  \\
U_2 \\
U_3
\end{array}
\right],
\end{equation}
such that~\eqref{eq:example_HT} holds. Moreover, taking into account all critical ordered vectors in~$\mathcal{N}_3$, the \textup{(}unconditional\textup{)} mixture distribution of the numbers of jobs of the various types is given by 
\begin{equation}
\Bigl(1-\frac{\lambda}{\lambda^*} \Bigl) \left( Q_S \right)_{S\in\mathcal{S} } \overset{d}{\rightarrow} (X_S)_{S\in\mathcal{S}}
\end{equation}
as $\lambda\uparrow\lambda^*$ with
$
(X_S)_{S\in\mathcal{S}}$ as in Table~\ref{tab:mixture_dist}.
\end{example}

\begin{remark}
The above heavy-traffic characterization focuses on the redundancy c.o.c.\ policy. However, the same conclusions hold for the c.o.s.\ mechanism. The key observation is that 
\begin{equation}
\lim\limits_{\lambda\uparrow\lambda^*} \mathbb{P}_{\text{c.o.s.}}(\boldsymbol{T}) = \frac{h^*(\boldsymbol{T},0)k(\boldsymbol{T})}{\sum\limits_{\tilde{\boldsymbol{T}}\in\mathcal{N}_K}h^*(\tilde{\boldsymbol{T}},0)k(\tilde{\boldsymbol{T}})} = \frac{h^*(\boldsymbol{T},0)}{\sum\limits_{\tilde{\boldsymbol{T}}\in\mathcal{N}_K}h^*(\tilde{\boldsymbol{T}},0)} =   \mathbb{P}^*(\boldsymbol{T}),
\end{equation}
where we used that the value of $k(\boldsymbol{T})$ is the same for all vectors $\boldsymbol{T}\in\mathcal{N}_K$ as they all include the same set of jobs.
\end{remark}

\section{Moments}\label{sec:moments}
In Section~\ref{sec:main_results} we focused on the Laplace transform and joint distribution of the (scaled) numbers of jobs of the various types. In this section we will turn our attention to the $n$th moments of the total number of jobs and those of the individual job types.
The $n$th moments of the total number of (waiting) jobs for a fixed arrival rate were derived in~\cite[Lemma EC.4]{Cardinaels2022} and are stated below for completeness. 
\begin{proposition}\label{prop:nth_moment_total}
Under the stability conditions stated in Condition~\ref{cond:stab}, the $n$th moment of the total number of jobs in the system, $Q$, under the c.o.c.\ mechanism is given by
\begin{equation}\label{eq:nth_moment_total}
\mathbb{E}\left[ Q^n \right] = n! \sum\limits_{\boldsymbol{S}\in \mathcal{N}} \gamma(\boldsymbol{S}) \times \mathbb{P}(\boldsymbol{S}),
\end{equation}
where for any $\boldsymbol{S}\in\mathcal{S}_m$ and $j=1,\dots,m$ we define 
\begin{equation}\label{eq:gamma_S}
\gamma(\boldsymbol{S}) \coloneqq \sum\limits_{\boldsymbol{k}\colon k_0+\dots + k_m=n} \frac{m^{k_0}}{k_0!}\prod\limits_{i=1}^m \sum\limits_{\boldsymbol{m}\in R(k_i)}\binom{m_1+\dots+m_{k_i}}{m_1,\dots,m_{k_i}} \left(1-B(\boldsymbol{S},i)\right)^{-|\boldsymbol{m}|}
\prod\limits_{j=1}^{k_i}\frac{B(\boldsymbol{S},i)^{m_j}}{j!^{m_j}},
\end{equation}
\begin{equation}\label{eq:B_S_j}
B(\boldsymbol{S},j) \coloneqq  \frac{N\lambda p(\boldsymbol{S},j)}{\mu(\boldsymbol{S},j)},
\end{equation}
and $\mathbb{P}(\boldsymbol{S})$ as defined Theorem~\ref{th:prelimit_distributions}.
Furthermore
\begin{equation}
R(k) \coloneqq \{ \boldsymbol{m}\in\mathbb{N}^k\colon 1m_1+2m_2 +\dots +km_k = k \} \quad \text{and} \quad |\boldsymbol{m}| = m_1 +m_2+\dots +m_k.
\end{equation}
Under the stability conditions stated in Condition~\ref{cond:stab}, the $n$th moment of the total number of waiting jobs in the system, $\tilde{Q}$, under the c.o.s.\ mechanism is given by
\begin{equation}
\mathbb{E}\left[ \tilde{Q}^n \right] = n!\sum\limits_{\boldsymbol{S}\in \mathcal{N}} \gamma(\boldsymbol{S}) \times  \mathbb{P}_{\text{c.o.s.}}(\boldsymbol{S}),
\end{equation}
with $\gamma(\boldsymbol{S})$ and $\mathbb{P}_{\text{c.o.s.}}(\boldsymbol{S})$ as defined in~\eqref{eq:gamma_S} and \eqref{eq:P_T_cos}, respectively.
\end{proposition}

Alternatively, one could rely on the probabilistic arguments outlined in Subsection~\ref{sec:interpretation} to derive an expression for the $n$th moments in terms of the individual moments of geometrically distributed random variables. An outline of the proof is deferred to Appendix~\ref{sec:moments_alternative}.

\subsection{Convergence of the moments}

Relying on Proposition~\ref{prop:nth_moment_total}, we can prove the following result.
\begin{theorem}\label{th:n_th_moment_limit}
With the definitions as in Section~\ref{sec:preliminaries}, it holds for both the c.o.c.\ and c.o.s. mechanisms
\begin{equation}\label{eq:n_th_moment_limit}
\lim\limits_{\lambda\uparrow\lambda^*} \mathbb{E}\left[ \left(\left(1-\frac{\lambda}{\lambda^*}\right) Q \right)^n \right] = \frac{(n+K-1)!}{(K-1)!}
\end{equation}
for any $n\ge 1$.
\end{theorem}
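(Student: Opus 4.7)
The plan is to reduce the statement to what we already know about the limiting distribution and then upgrade from distributional to moment convergence. By Corollary~\ref{cor:total_num_jobs}, $(1-\lambda/\lambda^*)Q$ converges in distribution to $E = U_1+\cdots+U_K$ with the $U_k$ i.i.d.\ unit-mean exponentials, i.e.\ $E$ is Erlang$(K,1)$. A direct density computation gives $\mathbb{E}[E^n]=\int_0^{\infty}x^n\frac{x^{K-1}\mathrm{e}^{-x}}{(K-1)!}\,dx=\frac{\Gamma(n+K)}{(K-1)!}=\frac{(n+K-1)!}{(K-1)!}$, which matches the right-hand side of~\eqref{eq:n_th_moment_limit}. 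So the whole content of the theorem is the \emph{convergence of the $n$th moment}, i.e.\ ruling out mass escaping to infinity.

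I would establish this by proving a uniform bound $\sup_{\lambda<\lambda^*}(1-\lambda/\lambda^*)^{n+1}\mathbb{E}[Q^{n+1}]<\infty$; combined with convergence in distribution, this gives uniform integrability of $\{((1-\lambda/\lambda^*)Q)^n\}_{\lambda<\lambda^*}$ and hence convergence of the $n$th moment to $\mathbb{E}[E^n]$. The bound is obtained by applying to the formula in Proposition~\ref{prop:nth_moment_total} the same asymptotic accounting that was used to prove Theorem~\ref{th:limit_gf}. Reorganize the outer sum as $\sum_{k=0}^{K}\sum_{\boldsymbol{S}\in\mathcal{N}_k}$ and, for each $\boldsymbol{S}\in\mathcal{N}_k$, multiply numerator and denominator by $(1-\lambda/\lambda^*)^K$. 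By Lemma~\ref{lem:intermediate_result} together with Theorem~\ref{th:prelimit_distributions}(c), $\mathbb{P}(\boldsymbol{S})=\Theta((1-\lambda/\lambda^*)^{K-k})$, while in $\gamma(\boldsymbol{S})$ only the factors $(1-B(\boldsymbol{S},i))^{-|\boldsymbol{m}|}$ with $i\in\mathrm{CR}(\boldsymbol{S})$ can diverge.

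Counting the maximal rate of blow-up in $\gamma(\boldsymbol{S})$: since $k_0+\cdots+k_m=n+1$ and $|\boldsymbol{m}|\le k_i$ for every $\boldsymbol{m}\in R(k_i)$, concentrating all mass on critical indices with $m_1=k_i$ yields $\gamma(\boldsymbol{S})=O((1-\lambda/\lambda^*)^{-(n+1)})$ when $k\geq 1$, and $\gamma(\boldsymbol{S})=O(1)$ when $k=0$. Combining, the contribution of $\boldsymbol{S}\in\mathcal{N}_k$ to $(1-\lambda/\lambda^*)^{n+1}\mathbb{E}[Q^{n+1}]$ is $O((1-\lambda/\lambda^*)^{K-k})$, which is uniformly bounded in $\lambda$ (and vanishes unless $k=K$). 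Summing over the finitely many $\boldsymbol{S}\in\mathcal{N}$ gives the required uniform bound.

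The main obstacle is purely bookkeeping: carefully verifying that the subleading factors in $\gamma(\boldsymbol{S})$ (the multinomial coefficients, the factorials in the denominators, and the $m^{k_0}/k_0!$ prefactor) indeed stay bounded so that the order analysis above is valid. Once the $(n+1)$th moment bound is in hand, uniform integrability gives~\eqref{eq:n_th_moment_limit} immediately from Corollary~\ref{cor:total_num_jobs}. The c.o.s.\ case is handled identically, applied to the second formula in Proposition~\ref{prop:nth_moment_total}: the extra factor $\sum_{\boldsymbol{u}}\prod\mu_{u_l}/\lambda_{\mathcal{C}(\cdot)}$ is a bounded constant in the limit (as already noted in the final remark of Subsection~\ref{subsec:alternative_proof_HT}), and the $k(\boldsymbol{T})$-weights become the same for all $\boldsymbol{T}\in\mathcal{N}_K$ so that $\mathbb{P}_{\mathrm{c.o.s.}}(\boldsymbol{T})$ has the same asymptotic order as $\mathbb{P}(\boldsymbol{T})$.
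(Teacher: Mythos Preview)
Your approach is correct and genuinely different from the paper's. The paper computes the limit of $(1-\lambda/\lambda^*)^n\mathbb{E}[Q^n]$ \emph{directly} from the formula in Proposition~\ref{prop:nth_moment_total}: it reorganizes the sum over $\mathcal{N}_k$, shows that only $k=K$ survives, and then tracks the exact contribution of each $(\boldsymbol{n},\boldsymbol{m})$-configuration in $\gamma(\boldsymbol{T})$ to arrive at the count $\binom{n+K-1}{n}$. You instead leverage the already-established distributional convergence (Corollary~\ref{cor:total_num_jobs}) to identify the limit as the Erlang moment, and reduce the problem to a uniform bound on one higher moment to force uniform integrability. The asymptotic bookkeeping you need is the same in spirit (the only diverging factors in $\gamma(\boldsymbol{S})$ are $(1-B(\boldsymbol{S},i))^{-|\boldsymbol{m}|}$ for $i\in\mathrm{CR}(\boldsymbol{S})$, and $|\boldsymbol{m}|\le k_i$ with $\sum k_i\le n+1$), but you only need $O(\cdot)$-estimates rather than exact constants, which is strictly less work. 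What you lose is the independent computational check: the paper's proof does not rely on Corollary~\ref{cor:total_num_jobs} and recovers the Erlang moment from first principles.

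One small omission for the c.o.s.\ case: the second formula in Proposition~\ref{prop:nth_moment_total} gives moments of the number of \emph{waiting} jobs $\tilde{Q}$, not of $Q$. The paper closes this gap explicitly via $\tilde{Q}\le Q\le \tilde{Q}+N$ and a binomial expansion; in your framework the same sandwich gives both the uniform $(n{+}1)$st-moment bound for $Q$ (from that for $\tilde{Q}$) and the distributional limit of $(1-\lambda/\lambda^*)Q$ (since the scaled difference vanishes). You should state this step.
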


Note that Theorem~\ref{th:n_th_moment_limit} aligns with the result in Corollary~\ref{cor:total_num_jobs} since the $n$th moment of an Erlang distribution with parameters 1 and $K$ is precisely given by~\eqref{eq:n_th_moment_limit}.

\begin{proof}
\textbf{C.o.c.\ mechanism:}\\
From Proposition~\ref{prop:nth_moment_total}, we can write
\begin{equation}\label{eq:nthmoment_goal}
\mathbb{E}\left[ \left(\left(1-\frac{\lambda}{\lambda^*}\right) Q \right)^n \right] = \frac{\tilde{f}\cdot\left(1-\frac{\lambda}{\lambda^*}\right)^K}{f(1)\left(1-\frac{\lambda}{\lambda^*}\right)^K},
\end{equation}
with 
\begin{equation}
\tilde{f} = n!\left(1-\frac{\lambda}{\lambda^*}\right)^n
\sum\limits_{k=0}^K \sum\limits_{\boldsymbol{T}\in\mathcal{N}_k}
\gamma(\boldsymbol{T}) 
h(\boldsymbol{T},\boldsymbol{1}),
\end{equation}
and $h(\boldsymbol{T},\boldsymbol{1})$ as in~\eqref{eq:def_h_T_z}.
Note that $f$ and $\tilde{f}$ no longer depend on $(z_S)_{S\in\mathcal{S}}$, but do depend on the arrival rate~$\lambda$. To ease the notation, this dependence is omitted. We also note that the summation index~$m$ in~\eqref{eq:nth_moment_total} ranges from 1 to $|\mathcal{S}|$, hence in the notation above we must exclude the empty vector from~$\mathcal{N}_0$. This is a minor technicality that is negligible in the heavy-traffic limit.
Let us first focus on the limiting behavior of the numerator of~\eqref{eq:nthmoment_goal}, which can be rewritten as
\begin{equation}\label{eq:numerator}
n! \sum\limits_{k=0}^K \sum\limits_{\boldsymbol{T}\in\mathcal{N}_k}
\left(1-\frac{\lambda}{\lambda^*}\right)^{K-k}
\times
h(\boldsymbol{T},\boldsymbol{1})\left(1-\frac{\lambda}{\lambda^*}\right)^{k}
\times
 \gamma(\boldsymbol{T})\left(1-\frac{\lambda}{\lambda^*}\right)^{n}
.
\end{equation}
It can easily be observed that
\begin{equation}
\lim\limits_{\lambda\uparrow\lambda^*}
\left(1-\frac{\lambda}{\lambda^*}\right)^{K-k}
=
\left\{
\begin{array}{lcr}
0 & & \text{if~}k<K, \\
1 & & \text{if~}k=K. \\
\end{array}
\right.
\end{equation}
Moreover, from Lemma~\ref{lem:intermediate_result}, we know that
\begin{equation}
\lim\limits_{\lambda\uparrow\lambda^*} h(\boldsymbol{T},\boldsymbol{1})\left(1-\frac{\lambda}{\lambda^*}\right)^{k} = \beta(\boldsymbol{T}),
\end{equation}
a finite, non-zero constant for each fixed~$\boldsymbol{T}$.
Define
\begin{equation}
{\gamma}(\boldsymbol{T},i)
\coloneqq
\sum\limits_{\boldsymbol{m}\in R(n_i)}
\binom{m_1+\dots+m_{n_i}}{m_1,\dots,m_{n_i}}
\left(
1- \frac{N\lambda p(\boldsymbol{T},i)}{\mu(\boldsymbol{T},i)}
\right)^{-|\boldsymbol{m}|}
\prod\limits_{j=1}^{n_i} \frac{B(\boldsymbol{T},i)^{m_j}}{j!^{m_j}},
\end{equation}
then
\begin{equation}\label{eq:detail_gamma}
\left(1-\frac{\lambda}{\lambda^*}\right)^{n} \gamma(\boldsymbol{T})
 = 
\sum\limits_{\boldsymbol{n}\colon n_0+\dots+n_{|\boldsymbol{T}|} = n}
\left(1-\frac{\lambda}{\lambda^*}\right)^{n_0}\frac{|\boldsymbol{T}|^{n_0}}{n_0!} \times
\prod\limits_{i\notin \text{CR}(\boldsymbol{T})} {\gamma}(\boldsymbol{T},i)
\times
\left( 1-\frac{\lambda}{\lambda^*}\right)^{n-n_0} 
\prod \limits_{i\in \text{CR}(\boldsymbol{T})} {\gamma}(\boldsymbol{T},i)
.
\end{equation}
It can be seen that 
\begin{equation}
\lim\limits_{\lambda\uparrow\lambda^*}
\left(1-\frac{\lambda}{\lambda^*}\right)^{n_0}
=
\left\{
\begin{array}{lcr}
0 & & \text{if~}n_0>0, \\
1 & & \text{if~}n_0=0, \\
\end{array}
\right.
\end{equation}
and 
\begin{equation}
\lim\limits_{\lambda\uparrow\lambda^*} {\gamma}(\boldsymbol{T},i) \in (0,\infty)
\end{equation}
for all $i\notin\text{CR}(\boldsymbol{T})$. Only the last part of~\eqref{eq:detail_gamma} requires some further investigation. Note that it can be rewritten as
\begin{equation} \label{eq:gamma_split}
\prod \limits_{i\notin \text{CR}(\boldsymbol{T})} \left(1-\frac{\lambda}{\lambda^*} \right)^{n_i}
\times
\prod \limits_{i\in \text{CR}(\boldsymbol{T})} 
\left(1-\frac{\lambda}{\lambda^*} \right)^{n_i}
{\gamma}(\boldsymbol{T},i).
\end{equation}
Obviously, the first product has a non-zero limit only if $n_i =0$ for all $i\notin \text{CR}(\boldsymbol{T})$. For each $i\in \text{CR}(\boldsymbol{T})$, the corresponding term in the above product can be rewritten as
\begin{equation}\label{eq:fixed_i_in_CR}
\sum\limits_{\boldsymbol{m}\in R(n_i)}
\binom{m_1+\dots+m_{n_i}}{m_1,\dots,m_{n_i}}
\left\{
\left(1-\frac{\lambda}{\lambda^*} \right)
\left(
1- \frac{N\lambda p(\boldsymbol{T},i)}{\mu(\boldsymbol{T},i)}
\right)^{-1}
\right\}^{|\boldsymbol{m}|}
\left(1-\frac{\lambda}{\lambda^*} \right)^{n_i-|\boldsymbol{m}|}
\prod\limits_{j=1}^{n_i} \frac{B(\boldsymbol{T},i)^{m_j}}{j!^{m_j}}.
\end{equation}
Now, for any $\boldsymbol{m}\in R(n_i)$, 
\begin{equation}
\lim\limits_{\lambda\uparrow\lambda^*} 
\prod\limits_{j=1}^{n_i} \frac{B(\boldsymbol{T},i)^{m_j}}{j!^{m_j}} 
= 
\prod\limits_{j=1}^{n_i} (j!)^{-m_j} \in (0,\infty),
\end{equation}
and 
\begin{equation}
\lim\limits_{\lambda\uparrow\lambda^*} 
\left\{
\left(1-\frac{\lambda}{\lambda^*} \right)
\left(
1- \frac{N\lambda p(\boldsymbol{T},i)}{\mu(\boldsymbol{T},i)}
\right)^{-1}
\right\}^{|\boldsymbol{m}|}
= 1
\end{equation}
as in~\eqref{eq:intermed_res_hopital}.
By definition of $\boldsymbol{m}\in R(n_i)$, we have that $n_i \ge |\boldsymbol{m}|$. However, only terms with precisely $n_i =|\boldsymbol{m}|$ will be non-vanishing in~\eqref{eq:fixed_i_in_CR}. This implies that $m_1 = n_i$ and $m_2 = \dots = m_{n_i} = 0$. Due to the above arguments, \eqref{eq:fixed_i_in_CR} converges to~1 as $\lambda\uparrow\lambda^*$. Hence, \eqref{eq:gamma_split} converges to 1 if all $n_i=0$ for $i\notin\text{CR}(\boldsymbol{T})$, and 0 otherwise. This results in
\begin{equation}
\lim\limits_{\lambda\uparrow\lambda^*} 
\left(
1-\frac{\lambda}{\lambda^*}
\right)^{n}
\gamma(\boldsymbol{T}) =
| \{ \boldsymbol{n}\in \mathbb{N}^{|\boldsymbol{T}|} \colon
n_1 +\dots +n_{|\boldsymbol{T}|} = n, n_i = 0 \text{~for~} i \in \text{CR}(\boldsymbol{T}) \} |
= \binom{n+k-1}{n}
\end{equation}
as $\boldsymbol{T}\in\mathcal{N}_k$. Hence, we can conclude that~\eqref{eq:numerator} converges to
\begin{equation}\label{eq:limit_numer}
n!\sum\limits_{\boldsymbol{T}\in\mathcal{N}_K} \beta(\boldsymbol{T})  \binom{n+K-1}{n} = \frac{(n+K-1)!}{(K-1)!} \sum\limits_{\boldsymbol{T}\in\mathcal{N}_K} \beta(\boldsymbol{T}) = \frac{(n+K-1)!}{(K-1)!} \beta(\mathcal{N}_K). 
\end{equation}
Now, let us focus on the denominator of~\eqref{eq:nthmoment_goal}. From~\eqref{eq:limit_f_k} and~\eqref{eq:theorem_part2} we know that 
\begin{equation}\label{eq:limit_denom}
\lim\limits_{\lambda\uparrow\lambda^*} f(1)\left(1-\frac{\lambda}{\lambda^*}\right)^K = \lim\limits_{\lambda\uparrow\lambda^*} f_K(1)\left(1-\frac{\lambda}{\lambda^*}\right)^K 
= \sum\limits_{\boldsymbol{T}\in\mathcal{N}_K} \beta(\boldsymbol{T})=\beta(\mathcal{N}_K).
\end{equation}
Dividing~\eqref{eq:limit_numer} by~\eqref{eq:limit_denom} concludes the proof for the c.o.c.\ mechanism.\\

\textbf{C.o.s.\ mechanism:}\\
The proof concepts are similar to those in the proof for the c.o.c.\ mechanism. Together with the arguments in the proof of Proposition~\ref{th:limit_gf} we can observe that
\begin{equation}
\begin{array}{l}
\lim\limits_{\lambda\uparrow\lambda^*}
\left(1-\frac{\lambda}{\lambda^*}\right)^{n+K}\sum\limits_{L=0}^N\sum\limits_{\boldsymbol{u}\in\mathcal{E}_L}\alpha(\boldsymbol{u})\sum\limits_{m=1}^{|\mathcal{S}|}\sum\limits_{\boldsymbol{S}\in \mathcal{S}_m^{\boldsymbol{u}}} \gamma(\boldsymbol{S}) h(\boldsymbol{S},\boldsymbol{1})\\
=
\binom{n+K-1}{K-1}\sum\limits_{L=0}^N\sum\limits_{\boldsymbol{u}\in\mathcal{E}_L}\alpha(\boldsymbol{u}) \sum\limits_{\boldsymbol{T}\in\mathcal{N}_K^{\boldsymbol{u}}} \beta(\boldsymbol{T}),
\end{array}
\end{equation}
and
\begin{equation}
\left(1-\frac{\lambda}{\lambda^*}\right)^{K}g(1) = \sum\limits_{L=0}^N\sum\limits_{\boldsymbol{u}\in\mathcal{E}_L}\alpha(\boldsymbol{u}) \sum\limits_{\boldsymbol{T}\in\mathcal{N}_K^{\boldsymbol{u}}} \beta(\boldsymbol{T}).
\end{equation}

In order to prove the convergence result in Theorem~\ref{th:n_th_moment_limit} in terms of the total number of jobs in the system, we use that $\tilde{Q}\le Q\le \tilde{Q}+N$. Hence, 
\begin{equation}
\mathbb{E}\left[\left(\left(1-\frac{\lambda}{\lambda^*}\right)\tilde{Q}\right)^n \right] \le \mathbb{E}\left[\left(\left(1-\frac{\lambda}{\lambda^*}\right)Q \right)^n \right] \le  \mathbb{E}\left[\left(\left(1-\frac{\lambda}{\lambda^*}\right)(\tilde{Q}+N)\right)^n \right],
\end{equation}
where the last term can be rewritten as
\begin{equation}
\mathbb{E}\left[\left(\left(1-\frac{\lambda}{\lambda^*}\right)\tilde{Q}\right)^n \right] 
+
 \sum\limits_{k=0}^{n-1} \binom{n}{k} N^{n-k}\left(1-\frac{\lambda}{\lambda^*}\right)^{n-k} 
 \mathbb{E}\left[\left(\left(1-\frac{\lambda}{\lambda^*}\right)\tilde{Q}\right)^k \right] \rightarrow \binom{n+K-1}{K-1}
\end{equation}
as $\lambda\uparrow\lambda^*$. We used the inductive argument that the $k$th moment of $\left(1-\frac{\lambda}{\lambda^*}\right)\tilde{Q}$ converges to $\binom{k+K-1}{K-1}$, for $k<n$. This concludes the proof.
\end{proof}

Applying Little's law~\cite{Little1961} to the first moments in Theorem~\ref{th:n_th_moment_limit} results in the following corollary for the response time of an arbitrary job.

\begin{corollary}[Expected response time]\label{cor:response_time} With the definitions as in Section~\ref{sec:preliminaries} and $R$ denoting the response time of an arbitrary job, it holds for both the c.o.c.\ and c.o.s.\ mechanisms
\begin{equation}
\lim\limits_{\lambda\uparrow\lambda^*} \mathbb{E}\left[ \left(1-\frac{\lambda}{\lambda^*}\right) R \right] = \frac{K}{N\lambda^*}.
\end{equation}
\end{corollary}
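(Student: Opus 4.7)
The plan is to derive Corollary~\ref{cor:response_time} as a direct consequence of the first-moment convergence in Theorem~\ref{th:n_th_moment_limit} combined with Little's law applied to the aggregate arrival and sojourn processes.

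First I would specialize Theorem~\ref{th:n_th_moment_limit} to $n=1$, which gives
\begin{equation}
\lim_{\lambda\uparrow\lambda^*}\mathbb{E}\!\left[\left(1-\tfrac{\lambda}{\lambda^*}\right)Q\right] \;=\; \frac{K!}{(K-1)!} \;=\; K,
\end{equation}
for both the c.o.c.\ and c.o.s.\ mechanisms. This identity already contains all the model-specific information; what remains is merely to translate a statement about $Q$ into one about $R$.

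Next I would invoke Little's law on the entire system. Jobs arrive at aggregate rate $N\lambda$ (the total Poisson rate of exogenous arrivals, irrespective of the number of replicas generated by the redundancy policy), and each such job contributes one sojourn whose duration is $R$. Since the system is stable (Condition~\ref{cond:stab}) whenever $\lambda<\lambda^*$ and the arrival process is Poisson, Little's law yields
\begin{equation}
\mathbb{E}[Q] \;=\; N\lambda\,\mathbb{E}[R],
\end{equation}
so that
\begin{equation}
\left(1-\tfrac{\lambda}{\lambda^*}\right)\mathbb{E}[R] \;=\; \frac{1}{N\lambda}\left(1-\tfrac{\lambda}{\lambda^*}\right)\mathbb{E}[Q].
\end{equation}
Taking the limit $\lambda\uparrow\lambda^*$ on both sides, using that $1/(N\lambda)\to 1/(N\lambda^*)$ and the first-moment convergence above, one obtains the claimed value $K/(N\lambda^*)$.

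The argument is essentially bookkeeping: the only point that deserves a brief remark is that the relevant arrival rate for Little's law is $N\lambda$ (the rate at which distinct jobs enter the system), not a larger rate associated with the replicas produced under the redundancy policy, since $R$ is the sojourn time of a job and each job contributes exactly one such sojourn. For the c.o.s.\ case one may equally well invoke the sandwich $\tilde Q\le Q\le \tilde Q+N$ used in the proof of Theorem~\ref{th:n_th_moment_limit} to ensure that the first-moment limit for $Q$ agrees with that for $\tilde Q$; in either case the limit is $K$, so the corollary follows identically. I do not anticipate a substantive obstacle here.
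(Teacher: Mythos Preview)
Your proposal is correct and matches the paper's approach exactly: the paper simply states that the corollary follows by applying Little's law to the first-moment case of Theorem~\ref{th:n_th_moment_limit}, which is precisely what you do. Your additional remarks about the relevant arrival rate and the c.o.s.\ sandwich are sensible elaborations but not needed beyond what the paper provides.
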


In~\cite[Lemma~EC.5]{Cardinaels2022}, also an expression for the $n$th moment of the number of jobs of a particular job type $S\in\mathcal{S}$ was derived. Using a similar methodology as above we can prove the following theorem.

\begin{theorem}\label{th:n_th_moment_type}
With the definitions as in Sections~\ref{sec:preliminaries} and \ref{sec:main_results}, it holds for a given job type $S\in\mathcal{S}$ for any $n\ge 1$
\begin{equation}
\lim\limits_{\lambda\rightarrow\lambda^*}
\mathbb{E}\left[ \left( 1-\frac{\lambda}{\lambda^*}\right)^n Q_S^n \right]
=
n!p_S^n
\sum\limits_{\sigma\in\Sigma_K} \mathbb{P}^*(\boldsymbol{T}^{\sigma})
\times
\sum\limits_{\boldsymbol{n}\colon n_{1}+\dots n_{K} = n}
\prod\limits_{k=1}^K p(\mathcal{C}_{\sigma(1)},\dots,\mathcal{C}_{\sigma(k)})^{-n_{k}}
\end{equation}
for the c.o.c.\ mechanism and
\begin{equation}
\lim\limits_{\lambda\rightarrow\lambda^*}
\mathbb{E}\left[ \left( 1-\frac{\lambda}{\lambda^*}\right)^n \tilde{Q}_S^n \right]
=
n!p_S^n  
\sum\limits_{\sigma\in\Sigma_K} \mathbb{P}^*(\boldsymbol{T}^{\sigma})
\times
\sum\limits_{\boldsymbol{n}\colon n_{1}+\dots n_{K} = n}
\prod\limits_{k=1}^K p(\mathcal{C}_{\sigma(1)},\dots,\mathcal{C}_{\sigma(k)})^{-n_{k}}
\end{equation}
for the c.o.s.\ mechanism.
\end{theorem}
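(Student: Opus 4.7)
The plan is to combine the distributional convergence from Theorem~\ref{th:general_results} with a uniform integrability argument to upgrade it to convergence of $n$th moments, and then to evaluate the moment of the limit $Y_S$ using the aggregated mixture representation developed in Section~\ref{subsec:proof_part2}. The explicit pre-limit formula from \cite[Lemma EC.5]{Cardinaels2022} (analogous to Proposition~\ref{prop:nth_moment_total} but specialized to a single job type) could be used as an alternative starting point, mirroring the proof of Theorem~\ref{th:n_th_moment_limit}, but the distributional route is shorter given the infrastructure already developed.

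First, I would verify uniform integrability of the family $\{((1-\lambda/\lambda^*)Q_S)^n\}_{\lambda<\lambda^*}$. The elementary bound $Q_S \le Q$ gives $((1-\lambda/\lambda^*)Q_S)^{n+1} \le ((1-\lambda/\lambda^*)Q)^{n+1}$, and Theorem~\ref{th:n_th_moment_limit} shows that the expected value of the right-hand side converges, hence is uniformly bounded in $\lambda$. This $(n+1)$st moment bound yields uniform integrability of the $n$th powers. Combined with Theorem~\ref{th:general_results}, it gives $\mathbb{E}[((1-\lambda/\lambda^*)Q_S)^n] \to \mathbb{E}[Y_S^n]$. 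The c.o.s.\ case is handled by the same argument, using $\tilde{Q}_S \le \tilde{Q}$ and the c.o.s.\ part of Theorem~\ref{th:n_th_moment_limit}.

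Next I would evaluate $\mathbb{E}[Y_S^n]$. By Proposition~\ref{prop:equality_GF}, $Y_S \overset{d}{=} X_S$, so I work with the aggregated mixture representation. Conditioning on the topological ordering via Lemmas~\ref{lem:N_K_versus_Sigma} and~\ref{lem:mixture_components} and using the weights from Proposition~\ref{prop:prob_t_sigma},
\begin{equation}
\mathbb{E}[X_S^n] \;=\; \sum_{\sigma\in\Sigma_K} \mathbb{P}^*(\boldsymbol{T}^\sigma)\, \mathbb{E}\!\left[\Bigl(p_S \sum_{k=1}^{K} \frac{U_k}{p(\mathcal{C}_{\sigma(1)}\cup\dots\cup\mathcal{C}_{\sigma(k)})} \cdot \mathds{1}\{k \ge k_S^*(\sigma)\}\Bigr)^{\!n}\right],
\end{equation}
where $k_S^*(\sigma)$ denotes the position in $\sigma$ of the CRP component containing $S$ (and the contribution is zero if $S$ belongs to no critical component). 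Applying the multinomial theorem and using that $U_1,\dots,U_K$ are independent unit-mean exponentials (so $\mathbb{E}[U_k^{n_k}] = n_k!$, which cancels against the $1/n_k!$ in the multinomial coefficient and leaves a factor of $n!$) produces the product-and-sum structure of the theorem. The c.o.s.\ moment is computed identically, since the key observation in the remark at the end of Section~\ref{subsec:alternative_proof_HT} shows that the aggregated limiting weights $\mathbb{P}^*(\boldsymbol{T}^\sigma)$ agree under both cancellation mechanisms.

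The main obstacle is the combinatorial bookkeeping: tracking precisely which $U_k$'s contribute to $X_S$ under each $\sigma$, and reconciling the resulting restricted sum with the product-over-all-$k$ form in the theorem statement. This is the same type of manipulation used in the proof of Proposition~\ref{prop:equality_GF} and in the derivation of Proposition~\ref{prop:prob_t_sigma} (via the identity in Lemma~\ref{lem:Guido_extension}), so no genuinely new ingredients are required beyond careful accounting. The uniform integrability step is straightforward thanks to Theorem~\ref{th:n_th_moment_limit}, and the remaining work is essentially an algebraic rearrangement.
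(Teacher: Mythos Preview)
Your proposal is correct and takes a genuinely different route from the one the paper indicates. The paper does not spell out a proof; it simply says ``using a similar methodology as above'' (i.e., the proof of Theorem~\ref{th:n_th_moment_limit}), meaning one starts from the explicit pre-limit moment formula for $Q_S$ from \cite[Lemma~EC.5]{Cardinaels2022} and carries out the same term-by-term heavy-traffic analysis as in the proof of Theorem~\ref{th:n_th_moment_limit}. You instead bypass the pre-limit moment expression entirely: the bound $Q_S\le Q$ combined with Theorem~\ref{th:n_th_moment_limit} gives a uniform $(n{+}1)$st-moment bound and hence uniform integrability, so Theorem~\ref{th:general_results} upgrades to moment convergence, after which you compute $\mathbb{E}[Y_S^n]$ via the aggregated mixture representation (Lemmas~\ref{lem:N_K_versus_Sigma}--\ref{lem:mixture_components}, Proposition~\ref{prop:prob_t_sigma}) and the multinomial expansion with $\mathbb{E}[U_k^{n_k}]=n_k!$.

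What each approach buys: the paper's direct route is self-contained at the level of moments and does not invoke the distributional limit, whereas your route is shorter because it reuses the heavy machinery already established in Section~\ref{sec:main_results} and reduces the problem to a clean algebraic computation on the limit. You also correctly flag the one nontrivial point---the restriction on which $U_k$'s actually contribute to $X_S$ under a given $\sigma$, versus the unrestricted composition sum in the stated formula---and rightly note that the required reconciliation is of the same flavour as the manipulations behind Proposition~\ref{prop:equality_GF} and Lemma~\ref{lem:Guido_extension}.
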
 


Next we will connect our results to the results derived by Af\`eche \emph{et al.}~\cite{Afeche2021}. Their analysis focuses on the redundancy c.o.s.\ mechanism or, as it is referred to there, the FCFS-ALIS (First Come First Served - Assign to the Longest Idle Server) policy. The motivation for the work in~\citep{Afeche2021} is to obtain some insight in the trade-off between the expected waiting times and rewards in parallel-server systems with compatibility constraints where the CRP condition is not necessariliy satisfied. For this purpose they build on the product-form stationary distributions obtained by Adan \& Weiss~\cite{Adan2018}, which involve a slightly different state descriptor than the ones derived in~\cite{Gardner2016queueing}. Instead of focusing on the central queue and the job type labels, this state descriptor is more server-centered and omits the job type labels. The state is of the form
$(s_1,n_1,s_2,n_2,\dots,s_k,n_k,s_{k+1},\dots,s_N)$ with $(s_1,\dots,s_N)$ a permutation of the $N$ servers. Here $s_1,\dots,s_k$ denote the busy servers ordered according to the arrival times of the jobs they are processing while $s_{k+1},\dots,s_N$ represent the ordered idle servers where server $s_{k+1}$ is idle for the longest time. The number of jobs that arrived after the job that is in service at server $s_i$ and can only be served by some servers in $s_1,\dots,s_i$ is denoted by $n_i$ for $i=1,\dots,k$.
 In view of the computational complexity of the stationary distribution, a heavy-traffic perspective is adopted in which it is possible to derive expressions for the expected waiting time of a given job type or the overall expected waiting time. 
In particular, in \cite[Corollary~2]{Afeche2021} they state that 
\begin{equation}\label{eq:results_afeche}
\lim\limits_{\lambda\uparrow\mu} \left(1-\frac{\lambda}{\mu}\right)\mathbb{E}[W] = \frac{K}{N\mu},
\end{equation}
with $W$ the waiting time of an arbitrary job. Obviously this coincides with our result in Corollary~\ref{cor:response_time}, taking into account that the service time of a job becomes negligible after scaling. We note that the `$K$' in their corollary is indeed the depth of the critical sets as formulated in Definition~\ref{def:depthK}, though it has a slightly different interpretation in~\cite{Afeche2021}, as alluded to in Remark~\ref{rem:K}.

It is worth emphasizing that the results in~\citep{Afeche2021} pertain to the first moments of the total number of jobs and the number of jobs of the various types, while our results go beyond the first moment (Theorems~\ref{th:n_th_moment_limit} and~\ref{th:n_th_moment_type}) and even describe the full (joint) stationary distribution (Theorem~\ref{th:general_results}).
Using similar arguments as in Section~\ref{sec:alternative proof}, it should be possible to extend the results in~\citep{Afeche2021} beyond the first moments.

Let us outline their \emph{server}-centered approach.
The states in~\citep{Afeche2021} are aggregated according to ordered vectors of servers. Given this ordering, the observation is made that the number of jobs between two servers is geometrically distributed and each job can be labeled with a particular type label independently~\cite{visschers2012product}. Hence the total number of (waiting) jobs of a particular type is given by a sum of geometric distributions (with different parameters). This yields the expected number of jobs of each type and, after applying Little's law, the expected waiting times as well.
 Though our analysis approach is more \emph{job}-centered, we apply some similar steps in the alternative proof of Theorem~\ref{th:general_results}. After aggregating states according to orderings of the job types (Theorem~\ref{th:prelimit_distributions}, part~c), we observe that the numbers of jobs of the various types in each segment also follow a geometric distribution (Theorem~\ref{th:prelimit_distributions}, parts~a and~b) such that we have a full characterization of the vector of queue lengths of the various types in terms of geometrically distributed random variables. It is noted that this characterization does not give immediate insight into the performance of the system due to the delicate interplay between the job-type orderings and the parameters of the geometric distributions, already computing higher-ordered moments is non-trivial (Proposition~\ref{prop:nth_moment_total_alternative}). Given these geometric distributions, we then analyze the system when $\lambda\uparrow\lambda^*$ in Subsection~\ref{subsec:alternative_proof_HT}. A similar approach should be possible in the \emph{server}-centered approach, using then the distributional form of Little's law~\cite{Little1961}, the waiting time distribution of the various job types could be derived to, for instance, also include the variance in the trade-off analysis conducted in~\cite{Afeche2021,Hillas2023}.

\subsection{Alternative formulation of Proposition~\ref{prop:nth_moment_total}}\label{sec:moments_alternative}

First we will derive an expression for the $n$th moment of the total number of jobs relying on the probabilistic interpretation of Proposition~\ref{prop:nth_moment_total} as outlined in Subsection~\ref{sec:interpretation}. Then we show, in Lemma~\ref{lem:moments_Guido}, that the obtained expression indeed coincides with the result in Proposition~\ref{prop:nth_moment_total}.

\begin{proposition}\label{prop:nth_moment_total_alternative}
Under the stability conditions stated in Condition~\ref{cond:stab}, the $n$th moment of the total number of jobs in the system, $Q$, under the c.o.c.\ mechanism is given by
\begin{equation}
\mathbb{E}\left[ Q^n \right] = n! \sum\limits_{\boldsymbol{T}\in\mathcal{N}}  \hat{\gamma}(\boldsymbol{T}) \times \mathbb{P}(\boldsymbol{T}),
\end{equation}
where for any $\boldsymbol{T}\in\mathcal{N}$ and $j = 1,\dots,|\boldsymbol{T}|$ we define
\begin{equation}
\begin{array}{rcl}
\hat{\gamma}(\boldsymbol{T}) &=& \sum\limits_{\substack{k_0,k_1,\dots,k_{|\boldsymbol{T}|}\in\mathbb{N} \colon \\ k_0+k_1+\dots+k_{|\boldsymbol{T}|}=n}} \frac{|\boldsymbol{T}|^{k_0}}{k_0!}
\prod\limits_{i=1}^{|\boldsymbol{T}|} \frac{1}{k_i!} \hat{\gamma}(\boldsymbol{T},i),\\

\hat{\gamma}(\boldsymbol{T},i) &=&  \left(\frac{p^{\boldsymbol{T},i}}{1-p^{\boldsymbol{T},i}}\right)^{k_i} \sum\limits_{l=0}^{k_i} \left\langle \begin{matrix}k_i\\ l \end{matrix}  \right \rangle \left(p^{\boldsymbol{T},i}\right)^{-l},
\end{array}
\end{equation}
$\mathbb{P}(\boldsymbol{T})$ as in Theorem~\ref{th:prelimit_distributions}, $p^{\boldsymbol{T},i}$ as in~\eqref{eq:param_geometric} and $\left\langle \begin{matrix}k_i\\ l \end{matrix}  \right \rangle$ as the Eulerian number, i.e.,  the number of permutations of $\{1,\dots,k_i\}$ in which exactly $l$ elements are greater than the previous element.
\end{proposition}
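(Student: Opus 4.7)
The plan is to leverage the probabilistic interpretation established in Section~\ref{sec:interpretation}. By Theorem~\ref{th:probabilistic_interpretation}, conditional on the ordered vector of job types $\boldsymbol{T}\in\mathcal{N}$ (which occurs with probability $\mathbb{P}(\boldsymbol{T})$ by Theorem~\ref{th:prelimit_distributions}(c)), the total number of jobs decomposes as $Q\mid\boldsymbol{T}\overset{d}{=}|\boldsymbol{T}|+\sum_{j=1}^{|\boldsymbol{T}|}Q^j$. Here the constant $|\boldsymbol{T}|$ records the first occurrence of each of the distinct job types appearing in $\boldsymbol{T}$, and the segment sizes $Q^1,\dots,Q^{|\boldsymbol{T}|}\mid\boldsymbol{T}$ are, by parts~(b) and~(d) of Theorem~\ref{th:prelimit_distributions}, mutually independent geometric random variables with parameters $p^{\boldsymbol{T},1},\dots,p^{\boldsymbol{T},|\boldsymbol{T}|}$.

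The first step is to condition on $\boldsymbol{T}$, apply the multinomial theorem to expand the $n$th power of this deterministic-plus-random sum, and invoke the conditional independence of the segment sizes to write
\begin{equation}
\mathbb{E}[Q^n] = \sum_{\boldsymbol{T}\in\mathcal{N}}\mathbb{P}(\boldsymbol{T}) \sum_{\substack{k_0,\dots,k_{|\boldsymbol{T}|}\in\mathbb{N}\\ k_0+\dots+k_{|\boldsymbol{T}|}=n}} \binom{n}{k_0,\dots,k_{|\boldsymbol{T}|}}|\boldsymbol{T}|^{k_0}\prod_{i=1}^{|\boldsymbol{T}|}\mathbb{E}\!\left[(Q^i)^{k_i}\mid\boldsymbol{T}\right].
\end{equation}
Factoring $n!$ out of the multinomial coefficient and pairing the $k_0$ term with the product immediately produces the overall prefactor $n!$ and the combinatorial weight $|\boldsymbol{T}|^{k_0}/k_0!$ appearing in the definition of $\hat{\gamma}(\boldsymbol{T})$. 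This reduces the proof to verifying the per-segment identity $\mathbb{E}[(Q^i)^{k_i}\mid\boldsymbol{T}]=\hat{\gamma}(\boldsymbol{T},i)$.

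For that remaining identity I would invoke the classical closed-form expression for the raw moments of a geometric random variable $G$ with $\mathbb{P}(G=k)=(1-p)p^k$. Starting from the generating-function identity $\sum_{k\geq 0}k^{m}x^{k}=(1-x)^{-(m+1)}\sum_{l=0}^{m-1}\left\langle\begin{matrix}m\\ l\end{matrix}\right\rangle x^{l+1}$, setting $x=p$ and multiplying by $(1-p)$, and reindexing via the standard symmetry $\left\langle\begin{matrix}m\\ l\end{matrix}\right\rangle = \left\langle\begin{matrix}m\\ m-1-l\end{matrix}\right\rangle$ of the Eulerian numbers, one obtains $\mathbb{E}[G^{m}]=\left(p/(1-p)\right)^{m}\sum_{l=0}^{m-1}\left\langle\begin{matrix}m\\ l\end{matrix}\right\rangle p^{-l}$. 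Since $\left\langle\begin{matrix}m\\ m\end{matrix}\right\rangle=0$, the upper summation index may be extended to $m$ without altering the value, matching verbatim the definition of $\hat{\gamma}(\boldsymbol{T},i)$ upon setting $p=p^{\boldsymbol{T},i}$ and $m=k_i$.

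No conceptual obstacle arises: the argument is essentially bookkeeping that combines conditioning on $\boldsymbol{T}$, the multinomial theorem, independence of segments, and a well-known moment formula for the geometric distribution. The only mild subtlety worth flagging is that the multinomial expansion involves $|\boldsymbol{T}|+1$ indices rather than $|\boldsymbol{T}|$, with the extra index $k_0$ governing how the $|\boldsymbol{T}|$ deterministic first-occurrence contributions distribute themselves; treating this $k_0$ component on the same footing as the random segment sizes is what produces the prefactor $|\boldsymbol{T}|^{k_0}/k_0!$ in the statement of the proposition.
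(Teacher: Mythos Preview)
Your proposal is correct and follows essentially the same route as the paper: condition on $\boldsymbol{T}$, use the decomposition $Q\mid\boldsymbol{T}\overset{d}{=}|\boldsymbol{T}|+\sum_j Q^j$ from Theorem~\ref{th:prelimit_distributions}(b,d), apply the multinomial theorem to separate the deterministic $|\boldsymbol{T}|$ from the independent geometric segments, and then invoke the Eulerian-number representation of the raw moments of a geometric random variable (the paper phrases this last step as an alternative representation of the polylogarithm, whereas you reach it via the Carlitz--Eulerian generating identity and the symmetry $\left\langle\begin{smallmatrix}m\\ l\end{smallmatrix}\right\rangle=\left\langle\begin{smallmatrix}m\\ m-1-l\end{smallmatrix}\right\rangle$, but the content is identical).
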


\begin{proof}
First we focus on a $\boldsymbol{T}\in\mathcal{N}$, such that $\boldsymbol{T} = [T_1,\dots,T_{|\boldsymbol{T}|}]$ is an ordered vector of job types.
We will use the notation as introduced in Subsection~\ref{sec:interpretation} to compute $\mathbb{E}[Q^n\mid \boldsymbol{T}]$. Conditioning on all ordered vectors will then yield the desired results. \\

We know from Theorem~\ref{th:prelimit_distributions} that the total number of jobs can be counted by adding the total number of jobs in each segment and including the first occurrences of each segment, i.e., 
\begin{equation}
Q\mid \boldsymbol{T} \overset{d}{=} \sum_{j=1}^{|\boldsymbol{T}|} 1+ Q^j\mid \boldsymbol{T},
\end{equation}
where $(Q^j\mid \boldsymbol{T})_{j=1,\dots,|\boldsymbol{T}|}$ are independent and geometrically distributed random variables with parameters $(p^{\boldsymbol{T},j})_{j=1,\dots,|\boldsymbol{T}|}$ as in~\eqref{eq:param_geometric}. Hence, by applying the multinomium of Newton, it holds that
\begin{equation}
\begin{array}{rcl}
\mathbb{E}[Q^n\mid \boldsymbol{T}] & = & \mathbb{E}\left[\left( |\boldsymbol{T}| + \sum\limits_{j=1}^{|\boldsymbol{T}|}  Q^j\right)^n\mid \boldsymbol{T}\right]\\

& = & \sum\limits_{\substack{k_0,k_1,\dots,k_{|\boldsymbol{T}|}\in\mathbb{N} \colon \\ k_0+k_1+\dots+k_{|\boldsymbol{T}|}=n}} \binom{n}{k_0,\dots,k_{|\boldsymbol{T}|}}
\mathbb{E}\left[|\boldsymbol{T}|^{k_0!} \prod\limits_{j=1}^{|\boldsymbol{T}|}  (Q^j)^{k_j}\mid \boldsymbol{T}\right]\\

& = & n! \sum\limits_{\substack{k_0,k_1,\dots,k_{|\boldsymbol{T}|}\in\mathbb{N} \colon \\ k_0+k_1+\dots+k_{|\boldsymbol{T}|}=n}} 
\frac{|\boldsymbol{T}|^{k_0}}{k_0}
\prod\limits_{j=1}^{|\boldsymbol{T}|} 
\frac{1}{k_j!}
\mathbb{E} \left[  (Q^j)^{k_j}\mid \boldsymbol{T}\right].
\end{array}
\end{equation}
Focusing on the moments of the individual geometric distributions then yields
\begin{equation}
\begin{array}{rcl}
\hat{\gamma}(\boldsymbol{T},j) & = & \mathbb{E} \left[  (Q^j)^{k_j}\mid \boldsymbol{T}\right]\\

 & = & (1-p^{\boldsymbol{T},j}) \sum\limits_{i=0}^{\infty} i^{k_j} \left(p^{\boldsymbol{T},j} \right)^{i}  \\
 
  & = & (1-p^{\boldsymbol{T},j}) \frac{1}{(1-p^{\boldsymbol{T},j})^{k_j+1}} \sum\limits_{i=0}^{k_j}\left\langle \begin{matrix}k_j\\ i \end{matrix}  \right \rangle  \left(p^{\boldsymbol{T},j} \right)^{k_j-i}.
\end{array}
\end{equation}
The last equality is based on an alternative representation of the polylogarithmic function $\sum_{i\ge 1} z^i/ i^{-k_j}$ with $z = p^{\boldsymbol{T},j}$, and introduces the Eulerian number to the expression.

This concludes the proof.
\end{proof}

Using the identity in the following lemma, we can verify that the expressions for the $n$th moments in Propositions~\ref{prop:nth_moment_total} and~\ref{prop:nth_moment_total_alternative} indeed coincide.

\begin{lemma}\label{lem:moments_Guido}
Let $k\in\mathbb{N}$ and $p\in(0,1)$, then
\begin{equation}\label{eq:moments_guido}
\frac{1}{k!} \left(\frac{p}{1-p} \right)^k \sum\limits_{j=0}^{k} \left\langle \begin{matrix}k\\ j \end{matrix}  \right \rangle p^{-j}
=
\sum\limits_{\boldsymbol{m}\in R(k)}
\binom{m_1 + \dots + m_k}{m_1,\dots,m_k}
 (1-p)^{-|\boldsymbol{m}|} \prod\limits_{j=1}^{k} \frac{p^{m_j}}{(j!)^{m_j}}.
\end{equation}
\end{lemma}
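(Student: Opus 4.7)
The plan is to show that both sides of~\eqref{eq:moments_guido} equal $\mathbb{E}[G^{k}]/k!$, where $G$ is a geometric random variable with $\mathbb{P}(G=n)=(1-p)p^{n}$ for $n\ge 0$; equivalently, both sides are rational functions of $p$ with matching power-series expansions at $p=0$.

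For the LHS, I would invoke the classical Eulerian-number generating function
\begin{equation}
\sum_{n\ge 0} n^{k}z^{n} \;=\; \frac{z\,A_{k}(z)}{(1-z)^{k+1}},
\qquad
A_{k}(z) \;=\; \sum_{j=0}^{k-1}\left\langle\begin{matrix}k\\ j\end{matrix}\right\rangle z^{j},
\end{equation}
which, after multiplication by $(1-p)$ at $z=p$, yields $\mathbb{E}[G^{k}]=p\,A_{k}(p)/(1-p)^{k}$. Using the Eulerian symmetry $\bigl\langle\begin{smallmatrix}k\\ j\end{smallmatrix}\bigr\rangle=\bigl\langle\begin{smallmatrix}k\\ k-1-j\end{smallmatrix}\bigr\rangle$ together with the convention $\bigl\langle\begin{smallmatrix}k\\ k\end{smallmatrix}\bigr\rangle=0$ for $k\ge 1$, a short re-indexing $j \mapsto k-1-j$ turns $\mathbb{E}[G^{k}]/k!$ into exactly the LHS of~\eqref{eq:moments_guido}.

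For the RHS, the strategy is to pass through Stirling numbers of the second kind via exponential generating functions. The multinomial theorem gives
\begin{equation}
(e^{x}-1)^{l} \;=\; \Bigl(\sum_{j\ge 1}\tfrac{x^{j}}{j!}\Bigr)^{l} \;=\; \sum_{\boldsymbol{m}\colon|\boldsymbol{m}|=l}\binom{l}{m_{1},m_{2},\ldots}\prod_{j\ge 1}\frac{x^{jm_{j}}}{(j!)^{m_{j}}}.
\end{equation}
Extracting the coefficient of $x^{k}$ restricts the sum to $\boldsymbol{m}\in R(k)$, and comparing with the standard EGF $(e^{x}-1)^{l}=l!\sum_{k\ge l}S(k,l)\,x^{k}/k!$ yields
\begin{equation}
\sum_{\boldsymbol{m}\in R(k),\;|\boldsymbol{m}|=l}\binom{l}{m_{1},\ldots,m_{k}}\prod_{j=1}^{k}\frac{1}{(j!)^{m_{j}}}
\;=\;\frac{l!\,S(k,l)}{k!}.
\end{equation}
Substituting this into the RHS of~\eqref{eq:moments_guido} (noting $\prod_{j}p^{m_{j}}=p^{|\boldsymbol{m}|}$, which lets $(p/(1-p))^{l}$ factor out of the inner sum) produces
\begin{equation}
\mathrm{RHS}\;=\;\frac{1}{k!}\sum_{l=0}^{k}S(k,l)\,l!\left(\frac{p}{1-p}\right)^{l}.
\end{equation}
Combining the falling-factorial expansion $n^{k}=\sum_{l}S(k,l)\,n^{(l)}$ with the geometric factorial moments $\mathbb{E}[G^{(l)}]=l!\,(p/(1-p))^{l}$ (a one-line induction) yields $\sum_{l}S(k,l)\,l!\,(p/(1-p))^{l}=\mathbb{E}[G^{k}]$, so the RHS also equals $\mathbb{E}[G^{k}]/k!$, completing the proof.

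The main technical step is the EGF identification in the second part, which converts the partition-indexed sum on the RHS into a Stirling-number expression; everything else amounts to standard identities and elementary re-indexing. An alternative route, avoiding the probabilistic interpretation entirely, would be to verify that both sides satisfy the same linear recursion in $k$ obtainable from $(1-z)\tfrac{d}{dz}\sum_{n}n^{k-1}z^{n}$, but the EGF argument above seems cleaner and makes the underlying combinatorial content transparent.
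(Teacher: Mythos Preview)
Your proof is correct. Both your argument and the paper's ultimately rest on the same generating function $\tfrac{1-p}{1-pe^{s}}$, which is the moment generating function of the geometric variable $G$; the paper shows directly that each side of~\eqref{eq:moments_guido} is the coefficient of $s^{k}$ in this function, whereas you phrase the same fact as ``both sides equal $\mathbb{E}[G^{k}]/k!$''. The execution differs in two respects. For the right-hand side, the paper expands $F(s)=(1-\tfrac{p}{1-p}(e^{s}-1))^{-1}$ by the multinomial theorem and reads off the $s^{k}$-coefficient directly; you insert the extra step of recognising the coefficient of $x^{k}$ in $(e^{x}-1)^{l}$ as $l!\,S(k,l)/k!$, which yields the Stirling--factorial-moment identity $\mathbb{E}[G^{k}]=\sum_{l}S(k,l)\,l!\,(p/(1-p))^{l}$. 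For the left-hand side, the paper assembles $\tilde F(s)$ from the Eulerian coefficients and closes it via the bivariate identity \cite[Eq.~26.14.4]{Olver}, while you use the univariate Worpitzky-type identity $\sum_{n}n^{k}z^{n}=zA_{k}(z)/(1-z)^{k+1}$ together with Eulerian symmetry, which is a shorter route to the same conclusion. Your version has the advantage of making the combinatorial content (Stirling numbers, factorial moments) explicit and ties in naturally with the probabilistic interpretation of Section~\ref{sec:interpretation}; the paper's version is self-contained at the level of formal power series and avoids naming auxiliary quantities.
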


\begin{proof}
The above result follows from the observation that the expressions can be considered as the coefficients of the same analytical function $F$ defined as
\begin{equation}
F(s) =  \left(1-\sum\limits_{r=1}^{\infty} c_r s^r \right)^{-1},
\end{equation}
with 
\begin{equation}
c_r \coloneqq \frac{1}{r!}\frac{p}{1-p}
\end{equation}
for $r\in\mathbb{N}$ and $s$ in a neighborhood of 0, more precisely $|p\mathrm{e}^s|<1$. Hence,
\begin{equation}
F(s)  =  \left(1-\frac{p}{1-p} (\mathrm{e}^s-1)\right)^{-1} =  \frac{1-p}{1-p\mathrm{e}^s},
\end{equation}
after substitution of $(c_r)_r$. Moreover, we can write
\begin{equation}
\begin{array}{rcl}
F(s) &=& \sum\limits_{m=0}^{\infty} \left( \sum\limits_{r=1}^{\infty} c_r s^r \right)^{m} \\
&=& 1 + \sum\limits_{m=1}^{\infty} \left( \sum\limits_{r=1}^{\infty} c_r s^r \right)^{m}\\
& = &  1+ \sum\limits_{m=1}^{\infty}
\sum\limits_{\substack{m_1,m_2,\dots \in \mathbb{N}\\ m_1+m_2+\dots = m}}
\binom{m}{m_1,m_2,\dots} \prod\limits_{j=1}^{\infty} (c_js^j)^{m_j}\\
& = &  1+ \sum\limits_{m=1}^{\infty}
\sum\limits_{\substack{m_1,m_2,\dots \in \mathbb{N}\\ m_1+m_2+\dots = m}}
\binom{m}{m_1,m_2,\dots} \prod\limits_{j=1}^{\infty} c_j^{m_j} \cdot s^{\sum_{j}jm_j},
\end{array}
\end{equation}
where we used the multinomium of Newton.
We now focus on the coefficient of $s^k$ in the above expression, which is given by
\begin{equation}
\begin{array}{rcl}
\sum\limits_{m=1}^{\infty}  \sum\limits_{\substack{m_1,m_2,\dots \in \mathbb{N}\\ m_1+m_2+\dots = m\\ 1m_1+2m_2 + \dots = k}}
\binom{m}{m_1,m_2,\dots} \prod\limits_{j=1}^{\infty} c_j^{m_j}
&=&
 \sum\limits_{\substack{m_1,m_2,\dots \in \mathbb{N}\\ 1m_1+2m_2 + \dots = k}}
\binom{m_1+m_2+\dots}{m_1,m_2,\dots} \prod\limits_{j=1}^{\infty} c_j^{m_j} \\
&=&
 \sum\limits_{\substack{m_1,m_2,\dots \in \mathbb{N}\\ 1m_1+2m_2 + \dots = k}}
\binom{m_1+m_2+\dots}{m_1,m_2,\dots} (1-p)^{-(m_1+m_2+\dots)} \prod\limits_{j=1}^{\infty} \frac{p^{m_j}}{(j!)^{m_j}}. 
\end{array}
\end{equation}
Notice that this expression of the coefficient of $s^k$ corresponds to the left-hand side of~\eqref{eq:moments_guido}. We now proceed in the opposite direction by setting up a function $\tilde{F}$ with as coefficients the right-hand side of~\eqref{eq:moments_guido} and showing that the resulting function coincides with $F$ by relying on the properties of the Eulerian numbers~\cite[Section~26.14]{Olver}. So, consider
\begin{equation}
\tilde{F}(s) = \sum_{k=1}^{\infty}  \left( \frac{1}{k!} (1-p)^{-k} \sum\limits_{j=0}^{k-1}\left\langle \begin{matrix}k\\ j \end{matrix}  \right \rangle p^{k-j}\right) s^k.
\end{equation}
We can rewrite the inner summation using a variable transformation,
\begin{equation}
\sum\limits_{j=0}^{k-1}\left\langle \begin{matrix}k\\ j \end{matrix}  \right \rangle p^{k-j} = 
\sum\limits_{i=1}^{k}\left\langle \begin{matrix}k\\ k-i \end{matrix}  \right \rangle p^{i}.
\end{equation}
From \cite[Eq. 26.14.9]{Olver} we know that $\left\langle \begin{matrix}k\\ i \end{matrix}  \right \rangle = \left\langle \begin{matrix}k\\ k-i-1 \end{matrix}  \right \rangle$ for $k\in\mathbb{N}\setminus\{0\}$ and $0\le i \le k-1$, such that 
\begin{equation}
\sum\limits_{i=1}^{k}\left\langle \begin{matrix}k\\ k-i \end{matrix}  \right \rangle p^{i}
=
\sum\limits_{i=0}^{k-1}\left\langle \begin{matrix}k\\ k-i-1 \end{matrix}  \right \rangle p^{i+1}
=
p \sum\limits_{i=0}^{k-1}\left\langle \begin{matrix}k\\ i \end{matrix}  \right \rangle p^{i}.
\end{equation}
So, 
\begin{equation}
\begin{array}{rcl}
\tilde{F}(s) & = & p  \sum\limits_{k=1}^{\infty}  \left( \frac{1}{k!} (1-p)^{-k} \sum\limits_{i=0}^{k-1}\left\langle \begin{matrix}k\\ i \end{matrix}  \right \rangle p^{i}\right) s^k\\
& = & p \sum\limits_{k=1}^{\infty} \sum\limits_{i=0}^{\infty}  \left\langle \begin{matrix}k\\ i \end{matrix}  \right \rangle p^{i}   \frac{\left((1-p)^{-1} s\right)^k}{k!},
\end{array}
\end{equation}
where we used that $\left\langle \begin{matrix}k\\ i \end{matrix}  \right \rangle = 0$ for $i\ge k\ge 1$. Notice that
\begin{equation}
p\sum\limits_{i=0}^{\infty} \left\langle \begin{matrix}0\\ i \end{matrix}  \right \rangle p^i \frac{\left((1-p)^{-1} s\right)^0}{0!}  = p,
\end{equation}
since $  \left\langle \begin{matrix}0\\ 0 \end{matrix}  \right \rangle = 1$ and $ \left\langle \begin{matrix}0\\ i \end{matrix}  \right \rangle = 0$ for $i\ge 1$, so that
\begin{equation}
\tilde{F}(s) = p \sum\limits_{k=0}^{\infty} \sum\limits_{i=0}^{\infty}  \left\langle \begin{matrix}k\\ i \end{matrix}  \right \rangle p^{i}   \frac{\left((1-p)^{-1} s\right)^k}{k!} -p.
\end{equation}
Relying on the following identity given in~\cite[Eq. 26.14.4]{Olver} which holds for $|x|<1$ and $|t|<1$
\begin{equation}
\sum\limits_{k,i=0}^{\infty} \left\langle \begin{matrix}k\\ i \end{matrix}  \right \rangle x^i \frac{t^k}{k!} = \frac{1-x}{\exp((x-1)t) - x},
\end{equation}
we can substitute $x=p$ and $t=\frac{s}{1-p}$ (with $|s| < 1-p$) to obtain
\begin{equation}
\tilde{F}(s) = p \frac{1-p}{\exp\left((p-1)\frac{s}{1-p}\right)-p} - p = 
\frac{p(1-p)}{\mathrm{e}^{-s}-p} - p =  \frac{1-p}{1-p\mathrm{e}^s}-1.
\end{equation}
Hence $F(s) = \tilde{F}(s)+1$, so that~\eqref{eq:moments_guido} follows.\\
This concludes the proof.
\end{proof}

\end{document}